\documentclass[11pt,oneside, reqno]{amsart}

\usepackage[colorlinks,linkcolor=blue,citecolor=blue, urlcolor=blue]{hyperref}
\usepackage{graphics}
\usepackage{enumitem}
\usepackage{marginnote}	
\usepackage{latexsym, amssymb, amsmath, amsthm}
\usepackage{mathrsfs}
\usepackage{tikz}
\usepackage{tikz-cd}
\usepackage[only,llbracket,rrbracket]{stmaryrd}
\usetikzlibrary{decorations.pathmorphing}

\tikzset{
	squiggly/.style={decorate, decoration={snake, amplitude=1mm, segment length=6mm}},
}

\makeatletter
\@namedef{subjclassname@2020}{%
	\textup{2020} Mathematics Subject Classification}
\makeatother

\newtheorem{theorem}{Theorem}[section]
\newtheorem{lemma}[theorem]{Lemma}
\newtheorem{proposition}[theorem]{Proposition}
\newtheorem{corollary}[theorem]{Corollary}

\theoremstyle{definition}\newtheorem{definition}[theorem]{Definition}
\theoremstyle{definition}
\theoremstyle{definition}\newtheorem{remark}[theorem]{Remark}

\def\ga{\mathfrak{a}}
\def\gb{\mathfrak{b}}

\def\gm{\mathfrak{m}}
\def\gp{\mathfrak{p}}
\def\gq{\mathfrak{q}}

\def\gA{\mathfrak{A}}
\def\gB{\mathfrak{B}}

\def \CA{\mathcal{A}}
\def \CB{\mathcal{B}}
\def \CC{\mathcal{C}}
\def \CD{\mathcal{D}}

\def \CJ{\mathcal{J}}

\def \CR{\mathcal{R}}
\def \CS{\mathcal{S}}

\def \CY{\mathcal{Y}}

\def \CZ{\mathcal{Z}}

\def \Prim{{\rm Prim}}
\def\Max{{\rm Max}}

\def \Spec{{\rm Spec}}
\def \ann{{\rm ann}}
\def \Frac{{\rm Frac}}

\def \mK{\Bbbk}
\def \Z{\mathbb{Z}}
\def \N{\mathbb{N}}

\def \D{\Delta}
\def \d{\delta}
\def \s{\sigma }
\def \l{\lambda}

\begin{document}

	\author{Tao Lu}
	
	\subjclass[2020]{16T05, 17B37, 16P40}
	
	\keywords{Super Jordan plane, Nichols algebra, Hopf algebra liftings, prime spectrum, simple module}
	
	\address{School of Mathematical Science, Yangzhou University, Yangzhou, 225002, China}
	
	\email{taolu@yzu.edu.cn}
	
	\begin{abstract} 
We study a family of liftings $\mathfrak U(\lambda)$ of the super Jordan plane over the infinite cyclic group in characteristic zero. For a nonzero lifting parameter $\lambda$, we identify a natural subalgebra $\mathfrak B$ with the bosonization of $U(\mathfrak{osp}(1|2))$, also known as $sl_{-1}(2)$, and show that the lifting $\mathfrak U(\lambda)$ is obtained from $\mathfrak B$ by an Ore localization at a single distinguished element. This structural description provides a unified approach to the representation theory and ideal structure of $\mathfrak U(\lambda)$. We classify the finite-dimensional simple modules of $\mathfrak B$ and prove that every finite-dimensional $\mathfrak B$-module is completely reducible; the localization then yields the corresponding results for $\mathfrak U(\lambda)$. We determine the prime, primitive, and completely prime ideals of both algebras. Finally, we compute their classical quotient rings, showing in particular that the classical quotient ring of $\mathfrak U(\lambda)$ is a $2\times2$ matrix algebra over the skew field of fractions of the tensor product of its centre and the first Weyl algebra.
	\end{abstract}
	
	\title{From bosonized $\mathfrak{osp}(1|2)$ to liftings of the super Jordan plane}
	\maketitle
	

\section{Introduction}

The lifting method of Andruskiewitsch and Schneider \cite{Andruskiewitsch-Schneider-1,Andruskiewitsch-Schneider-2} is a fundamental tool in the study of pointed Hopf algebras. Given the bosonization of a Nichols algebra, the lifting problem asks for all Hopf algebras whose associated graded Hopf algebra with respect to the coradical filtration is that bosonization. Such Hopf algebras, called \emph{liftings}, may be viewed as deformations of the algebra structure that preserve the underlying coalgebra structure. The method has led to many classification results for pointed Hopf algebras;  see, for instance, \cite{Andruskiewitsch-Angiono-Garcia Iglesias,Andruskiewitsch-Schneider-3,Angiono-Garcia Iglesias} and the references therein. 


The Jordan plane and the super Jordan plane are among the basic finite
Gelfand--Kirillov dimensional Nichols algebras of block type and, in
particular, provide important examples beyond the diagonal setting \cite{Andruskiewitsch-Angiono-Heckenberger}. Their liftings over nilpotent-by-finite groups were determined by Andruskiewitsch, Angiono, and Heckenberger \cite{Andruskiewitsch-Angiono-Heckenberger-1,Andruskiewitsch-Angiono-Heckenberger-2}. A family of liftings of the Jordan
plane over the infinite cyclic group was studied in
\cite{LuJordan2026}, where the centres, automorphism groups, prime and
primitive spectra, and finite-dimensional simple modules were determined.
The family $\mathfrak U(\lambda)$ considered in the present paper is the
corresponding family of liftings of the super Jordan plane over the infinite
cyclic group. We study its algebraic and representation-theoretic structure in characteristic zero.

The super Jordan plane and its bosonization have  been studied from several complementary viewpoints. Finite-dimensional simple and small indecomposable modules were classified in \cite{Andruskiewitsch-Bagio-Della Flora-Flores-1}; Hochschild homology and cohomology were computed in \cite{Reca-Solotar}; and a suitable bosonization and its representation theory were investigated in \cite{Andruskiewitsch-Bagio-Della Flora-Flores-2,LuBosoJordan}. More recently, super-primality, automorphisms, the prime spectrum, and the classical ring of quotients were studied in \cite{Andruskiewitsch-Dumas,Lu2026}. These results reveal a rich algebraic
structure associated with the super Jordan plane and its bosonizations.
For the liftings $\mathfrak U(\lambda)$ over the infinite cyclic group,
however, a structural description connecting their representation theory,
ideal structure, and classical quotient rings has not yet been given. The
purpose of this paper is to provide such a description.



There is a significant distinction between the cases $\lambda=0$ and
$\lambda\neq0$. When $\lambda=0$, the Hopf algebra $\mathfrak U(0)$ is
isomorphic to the bosonization of the super Jordan plane. Its prime
spectrum, classical quotient ring, and simple modules were studied in
\cite{LuBosoJordan} over a field of characteristic different from $2$. The approach relies on realizing a suitable localization of $\mathfrak{U}(0)$ as a matrix algebra.
For $\lambda\neq0$, the deformation leads to a substantially different
algebraic structure. In particular, it gives rise to a distinguished
subalgebra of $\mathfrak U(\lambda)$ isomorphic to the bosonization of
$U(\mathfrak{osp}(1|2))$. Moreover, $\mathfrak U(\lambda)$ can be recovered
from this subalgebra by an Ore localization at a single distinguished
element. This structural relationship is the main feature that allows us
to study the nonzero-parameter case systematically.

In this paper, we assume throughout that $\lambda\neq0$ and $\operatorname{char}\Bbbk=0$, and write $\mathfrak U=\mathfrak U(\lambda)$. We construct a natural subalgebra
$\mathfrak B$ of $\mathfrak U$, generated by suitable combinations of the
standard generators of $\mathfrak U$. Proposition~\ref{21Aug26} identifies
this algebra with the bosonization of the universal enveloping algebra of
$\mathfrak{osp}(1|2)$:
\[
\mathfrak{B}\cong \mathfrak{B}(\mathfrak{osp}(1|2))
:=U(\mathfrak{osp}(1|2))\, \#\, \Bbbk\mathbb Z_2.
\]
We then prove in Proposition~\ref{b23Aug26} that $\mathfrak U$ is obtained
from $\mathfrak B$ by Ore localization at a single distinguished element.
Thus, the lifting $\mathfrak U$ can be studied through the simpler algebra
$\mathfrak B$ together with one localization step.  This structural
relationship provides the main framework for the results of the paper.

The algebra $\mathfrak B(\mathfrak{osp}(1|2))$ is also known as
$sl_{-1}(2)$. It was introduced by Tsujimoto, Vinet, and Zhedanov as the
$q\to-1$ limit of the quantum algebra $sl_q(2)$ and interpreted as a
parabosonic Hopf algebra \cite{TsujimotoVinetZhedanov2011}. Its
Clebsch--Gordan and Racah problems are closely related to the dual $-1$
Hahn and Bannai--Ito polynomials
\cite{GenestVinetZhedanov2013,GenestVinetZhedanov2014}. Further connections
between $\mathfrak{B}(\mathfrak{osp}(1|2))$, the Bannai--Ito algebra, and polynomial realizations
have been developed in \cite{BaseilhacEtAl2018,BergeronVinet}. Thus, the subalgebra
$\mathfrak B$ occurring in our lifting problem is itself an algebra with a substantial representation-theoretic and special-function background.

Our results concern three aspects of the structure of
$\mathfrak B$ and $\mathfrak U$.

 First, we determine their finite-dimensional representation theory. We
 prove that the centre of $\mathfrak B$ is a polynomial algebra in one
 variable and classify all finite-dimensional simple $\mathfrak B$-modules.
 In particular, every finite-dimensional simple $\mathfrak B$-module has
 odd dimension. We further prove that every finite-dimensional
 $\mathfrak B$-module is completely reducible. The localization description then shows that restriction from $\mathfrak U$ to $\mathfrak B$ induces an equivalence between their finite-dimensional module categories. Consequently, the classification and complete reducibility results extend to finite-dimensional $\mathfrak U$-modules.

Second, we determine the prime spectra of $\mathfrak B$ and $\mathfrak U$.
Using a generalized Weyl algebra presentation of $\mathfrak B$, we
determine its prime ideals explicitly. The localization description of
$\mathfrak U$ then allows us to obtain the corresponding description for
$\mathfrak U$. In particular, every nonzero prime ideal of $\mathfrak U$
is primitive, and we determine precisely the completely prime ideals.

Third, we determine the classical quotient rings. We show that, after
localizing a natural subalgebra of $\mathfrak U$ at the powers of one of its
generators, the resulting algebra is a matrix algebra of size two over a
skew polynomial ring. After a suitable change of variables, this skew
polynomial ring is identified with a localization of the first Weyl algebra.
Consequently,
\begin{equation*}
Q(\mathfrak U)
\cong
M_2(\mathbf K),
\end{equation*}
where $\mathbf K$ is the skew field of fractions of
$\Bbbk[C]\otimes A_1$, $C$ is the distinguished central element of
$\mathfrak U$, and $A_1$ denotes the first Weyl algebra. The corresponding
description of $Q(\mathfrak B)$ follows from the fact that $\mathfrak U$ is
an Ore localization of $\mathfrak B$.

A complementary structural result concerns the subalgebra $\mathfrak A$ generated by $g^{\pm1}$, $x$, and $s$. We prove that $\mathfrak A$ is a Hopf subalgebra of $\mathfrak U$ and that $\mathfrak U=\mathfrak A[y;\sigma,\delta]$
is a Hopf Ore extension of $\mathfrak A$. We also determine the classical quotient ring and prime spectrum of $\mathfrak A$. The subalgebra $T$ generated by $g^{\pm1}$ and $x$ belongs to a family of GK-dimension one Hopf algebras studied by Liu \cite{LiuGongXiang}, while $\mathfrak A$ is obtained from $T$ by adjoining $s$ via a nontrivial derivation. This structure provides a useful intermediate algebra between $T$ and the lifting $\mathfrak U$.




The paper is organized as follows. Section~\ref{Preliminaries} collects the basic definitions, notation, and the Hopf Ore extension structure of $\mathfrak U$. Section~\ref{Bosp}  constructs the hidden copy of $\mathfrak{B}(\mathfrak{osp}(1|2))$ inside
$\mathfrak U$ and proves that $\mathfrak U$ is a localization of this
subalgebra. Section~\ref{Bspec} studies finite-dimensional modules and prime ideals of $\mathfrak B$. Section~\ref{Uspec} obtains the corresponding results for $\mathfrak U$. Section~\ref{Quotient} determines the classical quotient ring of $\mathfrak U$, studies the auxiliary subalgebras $T$ and $\mathfrak A$, and deduces in particular the classical quotient ring of $\mathfrak B$.

Throughout the paper, all modules are left modules, and the base field
$\Bbbk$ is assumed to be algebraically closed of characteristic zero. We write $\mathbb Z$ for
the ring of integers, $\mathbb N$ for the set of nonnegative integers,
$\mathbb Z^{\mathrm{odd}}$ for the set of odd integers, and
$\mathbb N^{\mathrm{odd}}$ for the set of odd positive integers.
The centre of a ring $R$ is denoted by $Z(R)$. For a subset $\CS \subset R$, we write $\langle \CS\rangle$ for the two-sided ideal generated by $\CS$. 
 The notation $[A,B]=AB-BA$ and $\{A,B\}=AB+BA$ will be used.

\section{Preliminaries} \label{Preliminaries}  
In this section we collect the background material used in the paper.
We first recall the bosonization of the super Jordan plane and its liftings
over the infinite cyclic group. We then
describe a Hopf Ore extension presentation of
$\mathfrak{U}=\mathfrak{U}(\lambda)$ and the associated chain of Hopf
subalgebras. Finally, we recall the bosonization of
$U(\mathfrak{osp}(1|2))$, which will serve as the comparison algebra in
Section~3.

\subsection{Bosonization of the super Jordan plane}
We begin by recalling the super Jordan plane and its bosonization.
 The \emph{super Jordan plane} $\CJ$ is the $\mK$-algebra generated by $x$  and $y$ subject to the defining relations
\begin{equation*} 
x^2=0, \qquad y s-sy-xs=0, \quad {\rm where}\,\, s:=xy+yx. 
\end{equation*}

The super Jordan plane is a Nichols algebra first introduced in \cite{Andruskiewitsch-Angiono-Heckenberger}. It has Gelfand--Kirillov dimension 2, and the elements $x^i s^j y^k$ ($i=0,1$; $j,k\in \N$) form a PBW basis of $\CJ$; see \cite{Andruskiewitsch-Angiono-Heckenberger}. Furthermore, $\CJ$ is Noetherian of infinite global dimension by \cite{Reca-Solotar}, and it admits a presentation as an Ore extension by \cite[Proposition 1.3]{Andruskiewitsch-Dumas}.

Let $G=\langle g\rangle$ be the infinite cyclic group with generator $g$. Consider the Yetter--Drinfeld module $V=\mK x\oplus \mK y$ in $_{\mK G}^{\mK G}\CY\CD$ with grading $V_g=V$ and action 
\[g\cdot x=-x, \qquad g\cdot y=-y+x.\]
 Then $\CJ$ is the Nichols algebra $\CB(V)$; see \cite[Proposition 3.5]{Andruskiewitsch-Angiono-Heckenberger}. Thus, $\CJ$
 is a Hopf algebra in the braided category
 ${}_{\mK G}^{\mK G}\CY\CD$, and its Radford--Majid bosonization
 \[
 A:=\CJ\#\mK G
 \]
 is a Hopf algebra in the category of $\mK$-vector spaces. We recall its
 explicit algebra and coalgebra structures below.

\begin{definition}
	The \emph{bosonization of the super Jordan plane} is the $\mK$-algebra $A$ generated by $x,y,g,g^{-1}$ subject to the relations
	\begin{equation*}
		\begin{gathered} 
		gg^{-1}=g^{-1}g=1, \quad gxg^{-1}=-x, \quad gyg^{-1}=-y+x, \\
	x^2=0, \\
	y s-sy-xs=0, \quad {\rm where}\,\, s:=xy+yx.\\ 
	\end{gathered}
	\end{equation*}
	Moreover, $A$ admits a Hopf algebra structure with comultiplication given by
	\begin{equation*}
	\D(g^{\pm 1})=g^{\pm 1} \otimes g^{\pm 1}, \quad \D(x)=x\otimes 1+ g\otimes x, \quad \D(y)=y \otimes 1+ g\otimes y. 
	\end{equation*}
\end{definition}

The algebra $A$ is a pointed Hopf algebra of Gelfand--Kirillov dimension $3$.
Its prime spectrum, classical
quotient ring, and simple modules were studied in
\cite{LuBosoJordan}, and its Drinfeld double
was studied in \cite{Andruskiewitsch-Pena Pollastri}.

\subsection{Liftings of the super Jordan plane}
We now recall the liftings of the super Jordan plane over the infinite cyclic group from \cite{Andruskiewitsch-Angiono-Heckenberger-1}. 
\begin{definition}
	Let $\l \in \mK$. Define $\mathfrak{U}(\l)$ to be the associative $\mK$-algebra generated by $g^{\pm 1}$, $x$, and $y$ subject to the relations
	\begin{equation} \label{DefU} 
	\begin{gathered}
	gg^{-1}=g^{-1}g=1, \quad gxg^{-1}=-x, \quad gyg^{-1}=-y+x,\\
	x^2=\l(1-g^2), \\
	ys-sy=xs-2\l y -\l xg^2, \quad \text{where } s:=xy+yx. 
	\end{gathered}
	\end{equation}
	Moreover, $\mathfrak{U}(\lambda)$ is a Hopf algebra with comultiplication, counit, and antipode given by
	\begin{equation*}
	\begin{gathered}
		\D(g^{\pm 1})=g^{\pm 1} \otimes g^{\pm 1}, \quad \D(x)=x\otimes 1+ g\otimes x, \quad \D(y)=y \otimes 1+ g\otimes y, \\
		\varepsilon(g)=1, \quad \varepsilon(x)=0, \quad \varepsilon(y)=0, \\
		S(g)=g^{-1}, \quad S(x)=-g^{-1}x, \quad S(y)=-g^{-1}y. 
	\end{gathered}
	\end{equation*}
\end{definition}
The defining relations \eqref{DefU} imply
\begin{equation} 
[x,s]=2\l x g^2, \qquad gsg^{-1}=s-2x^2.  \label{xsg} 
\end{equation}

By \cite[Proposition 4.4]{Andruskiewitsch-Angiono-Heckenberger-1}, for every $\lambda\in\mK$, the algebra $\mathfrak{U}(\lambda)$ is a pointed Hopf algebra of Gelfand--Kirillov dimension $3$, and every lifting of the super Jordan plane over the infinite cyclic group is isomorphic to $\mathfrak{U}(\lambda)$ for some $\lambda\in\mK$. The associated graded Hopf algebra of $\mathfrak{U}(\lambda)$ with respect to the coradical filtration is
\begin{equation*}
\operatorname{gr}\mathfrak{U}(\lambda)\cong A,
\end{equation*}
and $\mathfrak{U}(\lambda)$ is a cocycle deformation of $A$. Moreover, $\mathfrak{U}(\l) \cong \mathfrak{U}(\l')$ if and only if there exists $c\in \mK^*$ such that $\l=c\l'$.

When $\lambda=0$, the defining relations reduce to those of the bosonization $A$, and hence
\[\mathfrak{U}(0)\cong A.\]

Throughout the paper we assume $\lambda\in\mK^*$ and write $\mathfrak{U}:=\mathfrak{U}(\l)$.

\subsection{$\mathfrak{U}$ as a Hopf Ore extension}
For $q\in\mK^*$, let 
\begin{equation*}
\mK_q[a,b]:=\mK\langle a, b \mid ab=qba \rangle
\end{equation*}
 denote the quantum plane. If $q$ is a primitive $\ell$-th root of unity, then $Z(\mK_q[a,b])=\mK[a^\ell,b^\ell].$

Let $T$ be the subalgebra of $\mathfrak{U}$ generated by $g^{\pm 1}$ and $x$.
The formulas for $\Delta$, $\varepsilon$, and $S$ on the generators $g^{\pm1}$
and $x$ show that $T$ is a Hopf subalgebra of
$\mathfrak{U}$, and
\begin{equation} \label{Tgx}  
T\cong\mK_{-1}[g^{\pm 1}, x]/\langle x^2-\l(1-g^2) \rangle. 
\end{equation}
In particular, $T=\mK[g^{\pm 1}] \oplus \mK[g^{\pm 1}]x$ as a vector space. Its centre is $Z(T)=\mK[g^{\pm 2}]$, and $T$ is a free $Z(T)$-module of rank 4 with
$T=\oplus_{i,j=0}^1Z(T) g^i x^j.$ Consequently, 
\[\operatorname{GKdim} T=\operatorname{GKdim}Z(T)=1.\] 
This Hopf subalgebra is a special case of the family of prime regular Hopf algebras of
Gelfand--Kirillov dimension one introduced by Liu
\cite[Proposition 2.1]{LiuGongXiang}.

The algebra $\mathfrak{U}$ can be presented as an iterated Ore extension over $T$ in the form
\begin{equation} \label{Uite}  
\mathfrak{U}=T[s;\d_1][y;\s_2, \d_2],
\end{equation}
where $\d_1$ is the derivation of $T$ determined by 
\begin{equation*}
\d_1(g)=2x^2 g, \quad \d_1(x)=-2\l g^2 x, 
\end{equation*}
and $\sigma_2$ and $\delta_2$ are respectively the automorphism and $\sigma_2$-derivation of $T[s;\delta_1]$ determined by
\begin{equation*}
\begin{aligned}
\s_2(g)&=-g,  &\s_2(x)&=-x, &\s_2(s)&=s-2\l\\
\d_2(g)&=xg,  &\d_2(x)&=s, &\d_2(s)&=xs-\l xg^2. 
\end{aligned}
\end{equation*}
It follows that $\mathfrak{U}$ is Noetherian and
$\operatorname{GKdim}\mathfrak{U}=3$. Moreover,
\begin{equation*}
\left\{
g^i x^j s^k y^\ell
\mid
i\in\mathbb Z,\ j\in\{0,1\},\ k,\ell\in\mathbb N
\right\}
\end{equation*}
is a $\mK$-basis of $\mathfrak{U}$.


\begin{definition}
	Let $R$ be a Hopf $\mK$-algebra. A \emph{Hopf Ore extension} of $R$ is an algebra $H$ satisfying the following conditions:
	\begin{enumerate}[label=(\roman*)]
		\item $H$ is a Hopf algebra containing $R$ as a Hopf subalgebra;
		\item there exist an algebra automorphism $\sigma$ of $R$ and a $\sigma$-derivation $\delta$ of $R$ such that $H=R[x;\sigma,\delta]$.
	\end{enumerate}
\end{definition}

Let $\mathfrak{A}:=T[s;\d_1] \subset \mathfrak{U}$. Since $s=xy+yx$, a direct computation gives
\begin{equation*}
\D(s)=s\otimes 1 + xg \otimes x + g^2 \otimes s, \quad \varepsilon(s)=0, \quad S(s)=-g^{-2}(s-x^2). 
\end{equation*} 
Hence $\mathfrak{A}$ is a Hopf subalgebra of $\mathfrak{U}$.  In particular, \eqref{Uite} exhibits $\mathfrak{U}$ as a two-step iterated Hopf Ore extension of $T$.
We will consider the chain of Hopf subalgebras 
\[T \subset \mathfrak{A} \subset \mathfrak{U}\]
 as well as their localizations at the powers of $x$ and their classical quotient rings.

\subsection{The bosonization of $U(\mathfrak{osp}(1|2))$}
We end this section by recalling the bosonization of
$U(\mathfrak{osp}(1|2))$. Its algebra structure will be used in the next
section, while the Hopf structure is recorded only for completeness.

The universal enveloping algebra $U(\mathfrak{osp}(1|2))$ is generated, as an associative algebra, by the elements $H$, $X$, and $Y$ subject to the  relations
\begin{equation}  \label{Uosp12}  
HX-XH=X, \qquad HY-YH=-Y, \qquad XY+YX=H. 
\end{equation}
The elements $\{ Y^i H^j X^k \mid i,j,k\in \N \}$ form a PBW basis of $U(\mathfrak{osp}(1|2))$. The ring-theoretic properties of $U(\mathfrak{osp}(1|2))$ were studied by Pinczon \cite{Pinczon1990}.

\begin{definition}
	The \emph{bosonization $\mathfrak{B}(\mathfrak{osp}(1|2))$ of the enveloping algebra $U(\mathfrak{osp}(1|2))$} is the associative $\mK$-algebra generated by $H, X, Y$, and $P$ subject to the relations \eqref{Uosp12} together with
	\begin{equation*}
	P^2=1, \quad PH=HP, \quad PX=-XP, \quad PY=-YP. 
	\end{equation*}
	For completeness,  it is a Hopf algebra with comultiplication, counit, and antipode given by
	\begin{equation*}
	\begin{gathered}
	\D(H)=H\otimes 1+1\otimes H, \quad \D(X)=X\otimes 1 + P\otimes X, \\
	 \D(Y)=Y  \otimes 1+ P\otimes Y, \quad \D(P)=P \otimes P, \\
	\varepsilon(H)=\varepsilon(X)=\varepsilon(Y)=0, \quad \varepsilon(P)=1, \\
	S(H)=-H, \quad S(X)=-PX, \quad S(Y)=-PY, \quad S(P)=P^{-1}. 
	\end{gathered}
	\end{equation*}
		We will not use this Hopf structure in the sequel.
\end{definition}
The elements $ \{Y^iH^jX^kP^\epsilon
\mid i,j,k\in\mathbb{N},\ \epsilon\in\{0,1\}\}$ form a PBW basis of $\gB(\mathfrak{osp}(1|2))$. 


\section{The hidden $\mathfrak{B}(\mathfrak{osp}(1|2))$}  \label{Bosp} 

In this section, we construct a distinguished subalgebra
$\mathfrak B\subseteq\mathfrak U$ and prove that it is isomorphic to the
bosonization $\mathfrak B(\mathfrak{osp}(1|2))$. We then show that
$\mathfrak U$ is obtained from $\mathfrak B$ by localizing at a single
explicit element. This provides the main structural connection between
$\mathfrak U$ and $\mathfrak B$, and will allow us to transfer representation-theoretic and ring-theoretic information between the two algebras in the subsequent sections.

\begin{lemma} \label{DefB} 
	Define elements of $\mathfrak U$ by
	\begin{equation*}
	P:= g+\frac{1}{\sqrt{\l}}x, \quad H:=-\frac{1}{2\sqrt{\l}}\{y,P\}, \quad X:=xg^{-1}, \quad Y:=\frac{1}{4\l}[y, P]. 
	\end{equation*}
	Then these elements satisfy
	\begin{equation}
\begin{gathered} \label{XYP} 
P^2=1, \quad PH=HP,   \quad PX=-XP, \quad PY=-YP, \\
	[H, X]= X, \qquad [H, Y]=-Y, \qquad XY+YX=H.  
\end{gathered}	
\end{equation}
\end{lemma}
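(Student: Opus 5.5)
The plan is a direct verification from the defining relations \eqref{DefU} and their consequences \eqref{xsg}, taking the relations in \eqref{XYP} in order of increasing difficulty. Write $\mu:=1/\sqrt{\l}$, so that $P=g+\mu x$.

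\emph{The relations involving only $P$ and $X$.} Since $gx=-xg$, the cross terms cancel in $P^2$:
\[
P^2=g^2+\mu(gx+xg)+\mu^2x^2=g^2+\mu^2\l(1-g^2)=1 .
\]
For $X=xg^{-1}$ we have $gX=-Xg$, because $g$ anticommutes with $x$ and commutes with $g^{-1}$. We also have
\[
xX=x^2g^{-1}, \qquad Xx=xg^{-1}x=-x^2g^{-1},
\]
and $x^2$ commutes with $g$. Hence $PX=-XP$.

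\emph{The relations involving $y$.} The key structural observation is the following. Let $\alpha$ be conjugation by $P$, which is an involution since $P^2=1$. Then $\{y,P\}$ is $\alpha$-fixed and $[y,P]$ is $\alpha$-anti-fixed:
\[
P\{y,P\}P=Py+yP=\{y,P\}, \qquad P[y,P]P=Py-yP=-[y,P].
\]
This gives $PH=HP$ and $PY=-YP$ for free, with no use of the relations for $y$.

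\emph{The brackets with $H$ and the anticommutator.} The remaining relations $[H,X]=X$, $[H,Y]=-Y$ and $XY+YX=H$ genuinely use the $y$-relations. First I compute $Py$ and $yP$ explicitly in the PBW basis $g^ix^js^ky^\ell$. Using $gy=(-y+x)g$ and $xy=s-yx$,
\[
Py=-yg+xg+\mu s-\mu yx, \qquad yP=yg+\mu yx .
\]
From this,
\[
\{y,P\}=xg+\mu s, \qquad [y,P]=-2yg-2\mu yx+xg+\mu s .
\]
So $H=-\tfrac{\mu}{2}(xg+\mu s)$ lies in $\mathfrak A$. Then:
\begin{itemize}
\item[(a)] The relation $[H,X]=X$ follows from $[x,s]=2\l xg^2$ and $gsg^{-1}=s-2x^2$ in \eqref{xsg}, together with the $T$-relations. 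This is a short computation inside $\mathfrak A$.
\item[(b)] For $XY+YX$, it is cleaner to write $Y=\tfrac{1}{4\l}(2yP-\{y,P\})$. Since $X$ anticommutes with $P$, the anticommutator $\{X,yP\}$ reduces to $[X,y]P$, and $[X,y]$ is computed from $xy+yx=s$ and the $g$–$y$ relation. The result should be expressible through $s$, $xg$ and $P$, and should collapse to $H$.
\item[(c)] For $[H,Y]=-Y$, I use $H\in\mathfrak A$, so I need $[s,y]$ and $[xg,y]$. The first comes from the defining relation $ys-sy=xs-2\l y-\l xg^2$. The second comes from the $\sigma_2,\delta_2$ data: $\delta_2(g)=xg$ and $\delta_2(x)=s$.
\end{itemize}

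\emph{Main obstacle.} I expect the relation $[H,Y]=-Y$ to be the hardest step, since it is the only one that uses the cubic relation for $y$, and sign and normalization bookkeeping (the factors $\tfrac{1}{4\l}$ and $\sqrt{\l}$) must come out exactly. My first approach would be to expand both sides in the PBW basis and compare coefficients of $yg$, $yx$, $xg$ and $s$. As a consistency check I would use the Jacobi-type identity: once $\{X,Y\}=H$ and $[H,X]=X$ are known, the relation $[H,Y]=-Y$ is the natural $\mathfrak{osp}(1|2)$ companion. An error in the normalization of $Y$ would show up already in (b), so I would fix the constants there first.
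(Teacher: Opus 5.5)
Your plan is correct and is essentially the paper's proof. The shared steps are: conjugation by $P$ for $PH=HP$ and $PY=-YP$; the formula $H=-\frac{1}{2\l}(s+\sqrt{\l}\,xg)$; the relations \eqref{xsg} for $[H,X]=X$; the rewriting $Y=\frac{1}{4\l}(2yP-\{y,P\})$ for the anticommutator; and $[s,y]$, $[xg,y]$ for the last bracket. For the anticommutator, your variant via $\{X,yP\}=[X,y]P$ with $[X,y]=-(s+x^2)g^{-1}$ does collapse to $H$, whereas the paper uses $PX=\sqrt{\l}(g^{-1}-P)$ instead.

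Two small remarks. First, your displayed $[y,P]$ is actually $Py-yP$; with your $\mu=1/\sqrt{\l}$ the correct value is $2yg+2\mu yx-xg-\mu s$. This is harmless because you only use $2yP-\{y,P\}$ afterwards. Second, the PBW coefficient comparison for $[H,Y]=-Y$ can be avoided, as in the paper, by first showing $[H,y]=-(y+\sqrt{\l}\,PH)$ and then using $[H,Y]=\frac{1}{2\l}[H,y]P$ together with $PHP=H$.
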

\begin{proof}
		We first verify the relations involving $P$. Using $x^2=\lambda(1-g^2)$ and $gx=-xg$, we obtain
		\begin{equation*}
		P^2
		=g^2+\frac1{\sqrt{\lambda}}(gx+xg)+\frac{x^2}{\lambda}
		=g^2+1-g^2=1.
		\end{equation*}
	Moreover, $P(yP+Py)=PyP+y=(yP+Py)P$, so $P$ commutes with $\{y,P\}$, and hence $PH=HP$. Since $gx=-xg$, we immediately obtain $PX=-XP$. Finally, \[P(yP-Py)+(yP-Py)P=PyP-y+y-PyP=0,\]
	 so $P$ anticommutes with $[y,P]$, and consequently $PY=-YP$. 
	
	We next compute the commutators with $H$. First, $$\{y,P\}=yg+gy+\frac1{\sqrt{\lambda}}(yx+xy).$$ Since $yg+gy=xg$ and $xy+yx=s$, it follows that
	\begin{equation} \label{HH}  
H=-\frac{1}{2\l}(s+\sqrt{\l}\,xg).
	\end{equation}
	Using \eqref{xsg}, we obtain $[s, xg^{-1}]=-2\l xg^{-1}$. Observe that $xg$ commutes with $xg^{-1}$. Thus
	\begin{equation*}
	[H,X]=-\frac{1}{2\l}[s+\sqrt{\l}\, xg, xg^{-1}]=-\frac{1}{2\l}[s, xg^{-1}]=X. 
	\end{equation*}

Next, we rewrite $Y$ as
\begin{equation} \label{YY} 
Y=\frac{1}{4\l}(2yP-\{y,P\})=\frac{1}{2\l}yP+\frac{1}{2\sqrt{\l}}H. 
\end{equation}
Since $H$ commutes with $P$, we have $[H,Y]=\frac{1}{2\l}[H,y]P$. We now compute $[H,y]$.  Using $[y, xg]=gs+x^2g$ and the defining relations of $\mathfrak U$, we obtain
\begin{equation*}
\begin{aligned}\
[H,y]&=-\frac{1}{2\l}[s+\sqrt{\l}\,xg, \,y]\\
&=-\frac{1}{2\l}\left(  -xs+2\l y+ \l xg^2-\sqrt{\l} gs -\sqrt{\l} x^2 g   \right)\\
&=-\frac{1}{2\l}\left(  -(x+\sqrt{\l}g) s+2\l y -\sqrt{\l} (x+\sqrt{\l}g)xg   \right)\\
&=-\frac{1}{2\l}\left(  2\l y -(x+\sqrt{\l}g)(s+\sqrt{\l}xg)  \right)\\
&= -\frac{1}{2\l}\left(  2\l y -\sqrt{\l}P(-2\l H)  \right)\\
&=-\left(y+\sqrt{\l} PH \right). 
\end{aligned}
\end{equation*}
Therefore, 
\[[H,Y]=\frac{1}{2\l}[H,y]P=-(\frac{1}{2\l}yP+\frac{1}{2\sqrt{\l}}H)=-Y.\] 

It remains to prove that $XY+YX=H$. Using the expression for $Y$ in \eqref{YY}, we get
\begin{equation*}
\begin{aligned}
XY+YX=\frac{1}{2\l}X\left(yP+\sqrt{\l}H \right)+\frac{1}{2\l}\left(yP+\sqrt{\l}H\right)X.
\end{aligned}
\end{equation*}
Since $\{ y,P \}=-2\sqrt{\l}H$, we have $yP+\sqrt{\l}H=-Py-\sqrt{\l}H$. Hence
\begin{equation*}
\begin{aligned}
XY+YX &=\frac{1}{2\l}X\left(-Py-\sqrt{\l}H\right)+\frac{1}{2\l}\left(yP+\sqrt{\l}H\right)X\\
&=\frac{1}{2\l} \left(  yPX-XPy+\sqrt{\l}X\right)\\
&=\frac{1}{2\l} \left(  yPX+PXy+\sqrt{\l}X\right).
\end{aligned}
\end{equation*}
Now,
\begin{equation*}
PX
=\bigl(g+\frac{x}{\sqrt{\lambda}}\bigr)xg^{-1}
=-x+\sqrt{\lambda}(g^{-1}-g)
=\sqrt{\lambda}(g^{-1}-P).
\end{equation*}
Furthermore, $\{y, g^{-1}\}=g^{-1}x=-X$, while $\{ y,P \}=-2\sqrt{\l}H$. Consequently,
\begin{equation*}
\{y, PX \}=\sqrt{\l}\,\bigl(\{y,g^{-1}\}-\{y,P\}\bigr)=-\sqrt{\l}X+2\l H. 
\end{equation*}
Therefore, 
\[XY+YX =\frac{1}{2\l} \bigl(  \{y, PX \}+\sqrt{\l}X\bigr) =H.\] This proves all the relations in \eqref{XYP}.
\end{proof}

Let $\gB$ be the subalgebra of $\mathfrak{U}$ generated by $P$, $H$, $X$, and $Y$.
By Lemma~\ref{DefB}, these generators satisfy the defining
relations of $\mathfrak B(\mathfrak{osp}(1|2))$.
 
 \begin{proposition} \label{21Aug26}  
 	There is an algebra isomorphism $\mathfrak{B} \cong \mathfrak{B}(\mathfrak{osp}(1|2))$. 
 \end{proposition}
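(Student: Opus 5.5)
By Lemma~\ref{DefB}, the assignment $H\mapsto H$, $X\mapsto X$, $Y\mapsto Y$, $P\mapsto P$ defines a surjective algebra homomorphism $\phi:\mathfrak B(\mathfrak{osp}(1|2))\to\mathfrak B$. It remains to show that $\phi$ is injective. The plan is to show that $\phi$ maps the PBW basis $\{Y^iH^jX^kP^\epsilon\}$ of $\mathfrak B(\mathfrak{osp}(1|2))$ to a linearly independent subset of $\mathfrak U$. We test this against the PBW basis $\{g^ix^js^ky^\ell\}$ of $\mathfrak U$ coming from the iterated Ore extension \eqref{Uite}.

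We use a filtration. Give $\mathfrak U$ the filtration by $y$-degree combined with $s$-degree: set $\deg y=2$, $\deg s=2$, and give $g^{\pm1}$ and $x$ degree $0$. From the relations $ys-sy=xs-2\lambda y-\lambda xg^2$ and $s=xy+yx$, the associated graded algebra has a controlled form, but a simpler route may be to track only leading terms. We compute the leading forms of the images:
\[
X=xg^{-1},\qquad H=-\tfrac1{2\lambda}s+(\text{lower}),\qquad Y=\tfrac1{2\lambda}yP+(\text{lower}),\qquad P=g+\tfrac1{\sqrt\lambda}x .
\]
Then $Y^iH^jX^kP^\epsilon$ has leading term a nonzero scalar times $y^i s^j$ multiplied on the right by $P^i X^k P^\epsilon$, an element of $T$. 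Here we move $P$'s and $g$'s past $y$ and $s$ modulo lower terms; this uses that $\sigma_2$ and conjugation by $g$ preserve the leading parts up to sign. Reordering $y^is^j$ to $s^jy^i$ is also harmless modulo lower degree, since $[y,s]$ has smaller $(s,y)$-degree in the lexicographic sense. So the leading term is $s^jy^i\,t_{i,k,\epsilon}$ with $t_{i,k,\epsilon}=\pm P^{i}X^kP^\epsilon\in T$, possibly twisted by an automorphism of $T$.

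Linear independence then reduces to two facts. First, the elements $s^jy^i$ are free over $T$ on the right; this follows from the Ore extension basis, because right and left $T$-freeness are both available since the $\sigma$'s are automorphisms. Second, for fixed $(i,j)$, the elements $P^iX^kP^\epsilon$ with $k\in\N$ and $\epsilon\in\{0,1\}$ are linearly independent in $T$. For the second fact, note that $X=xg^{-1}$ satisfies $X^2=-x^2g^{-2}=-\lambda(g^{-2}-1)$. Hence $X^{2m}$ is a polynomial of degree $m$ in $g^{-2}$, and $X^{2m+1}$ is that polynomial times $X$. Multiplying on the left by the invertible element $P^i$, it suffices that the set $\{X^k,\,X^kP\}$ is independent. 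Writing everything in the basis $\{g^a,\,g^ax\}$ of $T$, the terms $X^k$ and $X^kP$ have distinct extreme $g$-degrees together with distinct $x$-parity patterns, so a triangularity argument gives independence.

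The main obstacle is making the leading-term bookkeeping rigorous. Specifically, one must ensure that moving $P$ and $T$-elements past $y$ and $s$ produces only terms of strictly lower filtration degree, so that the leading coefficients in $T$ are exactly as claimed. The first attempt will be to define the filtration $F_n=\sum_{2k+2\ell\le n}T s^k y^\ell$ and verify from $\sigma_2$, $\delta_2$, and $\delta_1$ that it is an algebra filtration with $\operatorname{gr}\mathfrak U$ a skew polynomial ring over $T$. Independence of the leading symbols then suffices.

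An alternative fallback is a GK-dimension argument. The algebra $\mathfrak B(\mathfrak{osp}(1|2))$ is prime Noetherian of GK-dimension $3$. If $\ker\phi\ne0$, the image would have GK-dimension at most $2$, since proper quotients of such algebras drop GK-dimension. One would then show that $\operatorname{GKdim}\mathfrak B=3$, for instance because $\mathfrak U$ is a localization of $\mathfrak B$ (Proposition~\ref{b23Aug26}). However, that localization result appears later in the paper and may depend on the present proposition, so the direct basis argument is preferred.
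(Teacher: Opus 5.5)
Your route differs from the paper's and works, but the filtration you actually write down has the wrong weights. With $\deg s=\deg y$ you get
\[
Y=\tfrac{1}{4\lambda}[y,P]=-\tfrac{1}{2\lambda}Py+\tfrac{1}{4\lambda}xg+\tfrac{1}{4\lambda\sqrt{\lambda}}\,s .
\]
Here the $s$-term, coming from $\tfrac{1}{2\sqrt\lambda}H$ in \eqref{YY}, has the same degree as $Py$, so the symbol of $Y$ is not $\tfrac1{2\lambda}\bar yP$. Also, $yx=-xy+s$ gives $\bar y\bar x+\bar x\bar y=\bar s$, so $\operatorname{gr}\mathfrak U$ is not a skew polynomial ring over $T$.

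Both of your claims become correct if you take $\deg s=1$ and $\deg y=2$, i.e.\ $F_n=\sum_{k+2\ell\le n}Ts^ky^\ell$; this is presumably what your lexicographic remark intends. The relevant data are $\delta_1(T)\subseteq T$, $\delta_2(g)=xg$, $\delta_2(x)=s$, $\delta_2(s)=xs-\lambda xg^2$ and $\sigma_2(s)=s-2\lambda$. With these, $F_n$ is an algebra filtration and $\operatorname{gr}\mathfrak U\cong T[\bar s][\bar y;\bar\sigma]$, where $\bar\sigma(g)=-g$, $\bar\sigma(x)=-x$, $\bar\sigma(\bar s)=\bar s$. The symbols are then $\bar H=-\tfrac1{2\lambda}\bar s$ and $\bar Y=\tfrac1{2\lambda}\bar yP$. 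So the image of $Y^iH^jX^kP^\epsilon$ in $F_{2i+j}/F_{2i+j-1}$ is a nonzero scalar times $\bar s^{\,j}\bar y^{\,i}P^iX^kP^\epsilon$.

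In a putative relation, take the largest $2i+j$ that occurs and compare the $T$-coefficients of $\bar s^{\,j}\bar y^{\,i}$. This reduces everything to the independence of $\{X^k,X^kP\}$ in $T$, which holds. The monomial with the most negative $g$-exponent is $g^{-k}x^{k\bmod 2}$ in $X^k$ and $g^{-k}x^{(k+1)\bmod 2}$ in $X^kP$. Each is the only monomial at exponent $-k$, and terms with smaller $k$ never reach that exponent. Hence the coefficients with the largest $k$ vanish.

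The paper argues from the other side. It embeds $\mathfrak B(\mathfrak{osp}(1|2))$ faithfully in $M_2(Q)$, where $Q=\Frac(U(\mathfrak{osp}(1|2)))$, via $v\mapsto\operatorname{diag}(v,\alpha(v))$ and $P\mapsto\left(\begin{smallmatrix}0&1\\1&0\end{smallmatrix}\right)$. It then uses that $1+\sqrt{\lambda}^{-1}X$ becomes invertible in $Q$ to write down images of $g^{\pm1},x,y$. This gives a map $\Psi\colon\mathfrak U\to M_2(Q)$ with $\Psi\circ\pi=\theta$ injective.

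That proof needs $U(\mathfrak{osp}(1|2))$ to be an Ore domain. It also needs a suppressed check of all the relations \eqref{DefU}, including the cubic one, inside $M_2(Q)$. In exchange, it never uses the PBW basis of $\mathfrak U$, and the same inversion of $1+\sqrt{\lambda}^{-1}X$ later drives Proposition~\ref{b23Aug26} and Theorem~\ref{A24Aug26}.

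Your argument instead relies on the iterated Ore structure \eqref{Uite}. Once the weights are fixed, the only computations are the two symbols and the independence check in $T$, and you also get explicit symbols for the induced filtration on $\mathfrak B$. With this fix, your GK-dimension fallback is unnecessary.
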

\begin{proof}
	By Lemma~\ref{DefB}, there is a surjective homomorphism
	\[
	\pi\colon \mathfrak B(\mathfrak{osp}(1|2))\twoheadrightarrow \mathfrak B,
	\qquad
	P\mapsto P,\ H\mapsto H,\ X\mapsto X,\ Y\mapsto Y.
	\]
	We prove that $\pi$ is injective.

	Let $\mathcal V:=\Bbbk\langle H,X,Y\rangle\subseteq
	\mathfrak B(\mathfrak{osp}(1|2))$.
	Then $\mathcal V\cong U(\mathfrak{osp}(1|2))$ is a
	Noetherian domain. Let $Q:=\Frac(\mathcal V)$ be the skew field of fractions of $\mathcal V$, and let
	$\alpha\in \operatorname{Aut}(\mathcal V)$ be the involution determined by
	\[
	\alpha(H)=H,\qquad \alpha(X)=-X,\qquad \alpha(Y)=-Y.
	\]
	Since
	$\mathfrak B(\mathfrak{osp}(1|2))=\mathcal V\# \langle P\rangle=\mathcal V \oplus \mathcal{V}P$,
	there is an algebra morphism
	\[
	\theta\colon \mathfrak B(\mathfrak{osp}(1|2))\longrightarrow M_2(Q)
	\]
	given by
	\[
	\theta(v)=
	\begin{pmatrix}
	v&0\\
	0&\alpha(v)
	\end{pmatrix}
	\qquad (v\in \mathcal V),\qquad
	\theta(P)=
	\begin{pmatrix}
	0&1\\
	1&0
	\end{pmatrix}.
	\]
  The crossed-product relation is encoded by
$\theta(P)\theta(v)\theta(P)=\theta(\alpha(v))$. Moreover,
\[
\theta(v+wP)=
\begin{pmatrix}
v&w\\
\alpha(w)&\alpha(v)
\end{pmatrix}
\qquad (v,w\in\mathcal V),
\]
so $\theta(v+wP)=0$ forces $v=w=0$. Thus $\theta$ is injective.
	
	Set
	$t:=1+\sqrt{\lambda}^{-1}X\in \mathcal V$.
	Since $\mathcal V$ is a domain, both $t$ and
	$\alpha(t)=1-\sqrt{\lambda}^{-1}X$
	are nonzero, hence invertible in $Q$. Therefore
	$\theta(t)$
	is invertible in $M_2(Q)$. Define
	\[
	\widetilde g:=\theta(t)^{-1}\theta(P),\qquad
	\widetilde x:=\theta(X)\widetilde g,\qquad
	\widetilde y:=\theta((2\lambda Y-\sqrt{\lambda}\,H)P).
	\]
	Since $PX=-XP$, we have
	\[
	\widetilde g^{-1}=\theta(P)\theta(t),
	\qquad
	\widetilde g\,\theta(X)\,\widetilde g^{-1}=-\theta(X).
	\]
	Hence
	\[
	\widetilde x\widetilde g^{-1}=\theta(X),\qquad
	\widetilde g\widetilde x\widetilde g^{-1}=-\widetilde x,
	\]
	and also
	\[
	\widetilde g+\frac{1}{\sqrt{\lambda}}\widetilde x
	=\theta\!\left(1+\sqrt{\lambda}^{-1}\,X\right)\widetilde g
	=\theta(P).
	\]
	Using $PH=HP$ and $PY=-YP$, we obtain
	\[
	-\frac{1}{2\sqrt{\lambda}}\{\widetilde y,\theta(P)\}=\theta(H),\qquad
	\frac{1}{4\lambda}[\widetilde y,\theta(P)]=\theta(Y).
	\]
	Hence 
	$\theta(P),\theta(H),\theta(X),\theta(Y)$ all lie in the subalgebra of $M_2(Q)$ generated by 
	$\widetilde g^{\pm1},\widetilde x,\widetilde y$.
	
   A direct computation shows that $\widetilde g^{\pm1},\widetilde x,\widetilde y$, together with $\widetilde s:=\widetilde x\widetilde y+\widetilde y\widetilde x$ satisfy the relations
	\eqref{DefU}.
	Hence, by the universal property
	of $\mathfrak U$, there is a homomorphism
	\[
	\Psi\colon \mathfrak U\longrightarrow M_2(Q)
	\]
	such that
	\[
	\Psi(g)=\widetilde g,\qquad
	\Psi(x)=\widetilde x,\qquad
	\Psi(y)=\widetilde y.
	\]	
	By construction,
	\[
	\Psi(P)=\theta(P),\qquad
	\Psi(H)=\theta(H),\qquad
	\Psi(X)=\theta(X),\qquad
	\Psi(Y)=\theta(Y).
	\]
	Therefore
	$\Psi|_{\mathfrak B}\circ \pi=\theta$.
	Since $\theta$ is injective, the map $\pi$ is injective as well. Hence $\pi$
	is an isomorphism, and therefore
	$\mathfrak B\cong \mathfrak B(\mathfrak{osp}(1|2))$.
\end{proof}

\begin{remark}
	Although Proposition \ref{21Aug26} gives an algebra isomorphism
	$\mathfrak B\cong \mathfrak B(\mathfrak{osp}(1|2))$, the inclusion
	$\mathfrak B\hookrightarrow \mathfrak U$ is not a morphism of Hopf
	algebras. Indeed, since
	$P=g+\frac{1}{\sqrt{\lambda}}x$,
	the comultiplication in $\mathfrak U$ gives
	\[
	\Delta(P)
	=\Delta(g)+\frac{1}{\sqrt{\lambda}}\Delta(x)
	=P\otimes 1+g\otimes (P-1).
	\]
Therefore $\Delta(P)\notin \mathfrak B\otimes \mathfrak B$, and $\mathfrak B$ is not a Hopf subalgebra of $\mathfrak U$.
\end{remark}

The preceding proposition identifies the distinguished subalgebra of
$\mathfrak U$ with the bosonization of $U(\mathfrak{osp}(1|2))$. We next show
that $\mathfrak U$ is obtained from this subalgebra by inverting one natural
element.

\begin{lemma} \label{23Aug26}  
	Let $w:=\lambda-X^2\in \mathfrak{B}$. Then $w$ is ad-locally nilpotent in $\mathfrak{B}$. 
\end{lemma}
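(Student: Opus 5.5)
Since $\operatorname{ad}_w:=[w,-]$ is a derivation of $\mathfrak B$, it suffices to show that $\operatorname{ad}_w$ is locally nilpotent on the generators $P$, $H$, $X$, $Y$. Indeed, for a derivation $D$ with $D^{m}(a)=0$ and $D^{n}(b)=0$, the Leibniz rule gives $D^{m+n-1}(ab)=0$. Hence the set of elements on which $D$ acts locally nilpotently is a subalgebra, and it will equal $\mathfrak B$ once it contains the generators. I will work entirely inside $\mathfrak B$ using the relations \eqref{XYP}, so that by Proposition~\ref{21Aug26} the computation is purely about $\mathfrak B(\mathfrak{osp}(1|2))$.

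First, $X^2$ commutes with $P$ (since $PX^2=X^2P$) and with $X$. Thus $\operatorname{ad}_w(P)=\operatorname{ad}_w(X)=0$. Next, $[H,X^2]=2X^2$ by the derivation property, so
\[
\operatorname{ad}_w(H)=-[X^2,H]=-2X^2 ,
\]
and then $\operatorname{ad}_w^2(H)=-2[w,X^2]=0$. For $Y$, the relation $XY+YX=H$ gives
\[
[X^2,Y]=X(XY+YX)-(XY+YX)X=XH-HX=-X.
\]
Hence $\operatorname{ad}_w(Y)=X$, and $\operatorname{ad}_w^2(Y)=[w,X]=0$.

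So $\operatorname{ad}_w$ kills each generator after at most two steps. The Leibniz argument above then shows that $\operatorname{ad}_w$ is locally nilpotent on all of $\mathfrak B$; more precisely, a monomial of length $n$ in the generators is killed by $\operatorname{ad}_w^{\,n+1}$.

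The only step needing care is the sign bookkeeping in $[X^2,Y]$ and $[X^2,H]$, and this is immediate from \eqref{XYP}. I expect no genuine obstacle. The point of the lemma is presumably that, because $\operatorname{ad}_w$ is locally nilpotent, the multiplicative set generated by $w$ is an Ore set in $\mathfrak B$, which prepares for localizing at $w$ in Proposition~\ref{b23Aug26}.
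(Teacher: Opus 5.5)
Your proof is correct and follows the paper's argument: you show $\operatorname{ad}_w$ kills $P$ and $X$, that $\operatorname{ad}_w^2$ kills $H$ and $Y$ via $[w,H]\in\Bbbk X^2$ and $[w,Y]=X$, and you conclude with the Leibniz rule. One harmless slip: $\operatorname{ad}_w(H)=-[X^2,H]=[H,X^2]=2X^2$, not $-2X^2$, but this does not affect the conclusion since $[w,X^2]=0$ either way.
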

\begin{proof}
	Since $PX=-XP$, we have $wP=Pw$.  Clearly, $w$ also commutes with $X$. Thus
	$\operatorname{ad}_w(P)=\operatorname{ad}_w(X)=0$.
	Moreover, $[w,H]=[H,X^2]=2X^2$, and
	\begin{equation*}
	[w, Y]=[Y,X^2]=YX^2-X^2Y=(H-XY)X-X(H-YX)=HX-XH=X.
	\end{equation*}
	Thus $\operatorname{ad}_w^2(H)=0$, and $\operatorname{ad}_w^2(Y)=0$. Since $\operatorname{ad}_w$ is a derivation, the Leibniz rule now implies that
	$\operatorname{ad}_w$ is locally nilpotent on $\mathfrak B$.
\end{proof}

By Lemma~\ref{23Aug26}, the multiplicative set
$S:=\{w^n\mid n\in\mathbb N\}$
is a left and right Ore set in $\mathfrak B$. Let $\mathfrak{B}_w$ denote the localization of $\mathfrak{B}$ at this Ore set. 
\begin{proposition} \label{b23Aug26}  
	There is an algebra isomorphism $\mathfrak{U} \cong \mathfrak{B}_w$. 
\end{proposition}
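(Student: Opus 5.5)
Write $\iota\colon\mathfrak B\hookrightarrow\mathfrak U$ for the inclusion. The plan has three steps: show that $w$ is invertible in $\mathfrak U$, extend $\iota$ to $\mathfrak B_w$ by the universal property of Ore localization, and then prove that the extension is bijective.

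\textbf{Step 1: $w$ is a unit in $\mathfrak U$.} We compute $w$ inside $\mathfrak U$. Since $gx=-xg$, we have $X^2=xg^{-1}xg^{-1}=-x^2g^{-2}$. Using $x^2=\lambda(1-g^2)$ this gives
\[
X^2=-\lambda(g^{-2}-1),\qquad w=\lambda-X^2=\lambda g^{-2}.
\]
Hence $w$ is invertible in $\mathfrak U$, with $w^{-1}=\lambda^{-1}g^2$. By the universal property of the Ore localization (Lemma~\ref{23Aug26} ensures $\{w^n\}$ is Ore), $\iota$ extends uniquely to a homomorphism $\Phi\colon\mathfrak B_w\to\mathfrak U$.

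\textbf{Step 2: $\Phi$ is surjective.} It suffices to show that $g^{\pm1},x,y$ lie in the image. First, $g^{-2}=\lambda^{-1}w\in\mathfrak B$ and $g^2=\lambda w^{-1}\in\mathfrak B_w$. Next, $X^2$ and $P$ commute, and
\[
Pg^{-1}=1+\tfrac{1}{\sqrt\lambda}xg^{-1}=1+\tfrac{1}{\sqrt\lambda}X .
\]
Set $t:=1+\lambda^{-1/2}X$ and $\bar t:=1-\lambda^{-1/2}X$. Then $t\bar t=1-\lambda^{-1}X^2=\lambda^{-1}w$, which is invertible in $\mathfrak B_w$. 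From $Pg^{-1}=t$ we get $g=t^{-1}P=\lambda w^{-1}\bar t P\in\mathfrak B_w$, and consequently $g^{-1}=Pt$ and $x=Xg$ also lie in the image. For $y$, we use \eqref{YY}: $yP=2\lambda Y-\sqrt\lambda H$, so $y=(2\lambda Y-\sqrt\lambda H)P$. This is exactly the formula $\widetilde y$ used in the proof of Proposition~\ref{21Aug26}. Therefore $\Phi$ is onto.

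\textbf{Step 3: $\Phi$ is injective.} This is the main obstacle. We reuse the faithful map $\theta\colon\mathfrak B\to M_2(Q)$ and the homomorphism $\Psi\colon\mathfrak U\to M_2(Q)$ from the proof of Proposition~\ref{21Aug26}, which satisfy $\Psi\circ\iota=\theta$ (identifying $\mathfrak B$ with $\mathfrak B(\mathfrak{osp}(1|2))$).

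First, $\theta(w)=\operatorname{diag}(\lambda-X^2,\lambda-X^2)$ is invertible in $M_2(Q)$, because $\mathcal V$ is a domain. So $\theta$ extends to $\theta_w\colon\mathfrak B_w\to M_2(Q)$, and by uniqueness of extensions $\Psi\circ\Phi=\theta_w$.

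It then suffices to show that $\theta_w$ is injective. Take an element $w^{-n}b$ of $\mathfrak B_w$ with $\theta_w(w^{-n}b)=0$. Then $\theta(b)=\theta(w)^n\theta_w(w^{-n}b)=0$, so $b=0$ by injectivity of $\theta$, and the element itself is zero. Hence $\Phi$ is injective and therefore an isomorphism.

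The only points to check carefully are the computation $X^2=\lambda(1-g^{-2})$ and the compatibility $\Psi\circ\Phi=\theta_w$. The latter holds because both sides are homomorphisms out of $\mathfrak B_w$ that agree on $\mathfrak B$, and such a homomorphism is determined by its restriction to $\mathfrak B$.
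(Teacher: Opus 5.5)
Your proof is correct. Steps 1 and 2 are the paper's argument: the same computation $w=\lambda g^{-2}$, and the same expressions $g=\lambda w^{-1}\bar t P$, $g^{-1}=Pt$, $x=Xg$, $y=(2\lambda Y-\sqrt{\lambda}H)P$ for surjectivity.

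Step 3 takes a different route, and injectivity is not actually the main obstacle. By definition $\mathfrak B$ is a subalgebra of $\mathfrak U$, so $\iota$ is already injective. The argument in your last paragraph therefore applies verbatim with $\iota$ in place of $\theta$: if $\Phi(w^{-n}b)=\lambda^{-n}g^{2n}b=0$ in $\mathfrak U$, then $b=0$, hence $w^{-n}b=0$. This is essentially what the paper does. It phrases it as: $\mathfrak B\to\mathfrak B_w$ is injective because $w$ is regular, and $\mathfrak B\to\mathfrak U$ is the inclusion.

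Your detour through $\theta_w$ and $\Psi$ is nonetheless valid, and it yields something the paper does not record. Since $\Phi$ is onto and $\Psi\circ\Phi=\theta_w$ is injective, $\Psi$ itself is injective. So $\mathfrak U$ embeds into $M_2(Q)$, with $Q=\operatorname{Frac}(U(\mathfrak{osp}(1|2)))$. The cost is that your argument depends on the internal construction in the proof of Proposition~\ref{21Aug26}, whereas the direct argument needs only that $\mathfrak B\subseteq\mathfrak U$.
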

\begin{proof}
	Inside $\mathfrak{U}$, one has $X=xg^{-1}$ and $x^2=\lambda(1-g^2)$. Hence
	\begin{equation*}
	X^2=xg^{-1}xg^{-1}=-x^2g^{-2}=-\lambda(1-g^2)g^{-2}=\lambda-\lambda g^{-2},
	\end{equation*}
	so $w=\lambda-X^2=\lambda g^{-2}$. Hence every element of $\{w^n \mid n\in \N \}$ is invertible in $\mathfrak{U}$, and the inclusion $\mathfrak{B} \hookrightarrow \mathfrak{U}$ induces an algebra morphism $\phi: \mathfrak{B}_w \rightarrow \mathfrak{U}$. 
	
	To prove surjectivity, it is enough to show that $g^{\pm 1} \in \mathfrak{B}_w$, because then 
	\begin{equation*}
	x=Xg\in \mathfrak{B}_w, \qquad y= (2\l Y-\sqrt{\l} H)P\in \mathfrak{B} \subseteq \mathfrak{B}_w,
	\end{equation*}
	and $\mathfrak{U}$ is generated by $g^{\pm 1}$, $x$, and $y$. Now
	\begin{equation*}
	(\sqrt{\lambda}+X)(\sqrt{\lambda}-X)=\lambda-X^2=w, 
	\end{equation*}
    and $X$ commutes with $w$. Hence in $\mathfrak{B}_w$, 
    \begin{equation*}
    (\sqrt{\lambda}+X)^{-1}= w^{-1}(\sqrt{\lambda}-X).
    \end{equation*}
    From the equality $(\sqrt{\l}+X)g=\sqrt{\l}P$, we obtain
    \begin{equation*}
    g=\sqrt{\lambda}w^{-1}(\sqrt{\lambda}-X)P\in \mathfrak{B}_w.
    \end{equation*}
    On the other hand, $g^{-1}=\frac{1}{\sqrt{\lambda}} P(\sqrt{\lambda}+X) \in \mathfrak{B} \subseteq \mathfrak{B}_w.$
    Thus $\phi$ is surjective. 
    
    It remains to prove injectivity. Since $\mathfrak B$ is Noetherian and
    $w$ is a regular element, the localization map
    $\mathfrak B\rightarrow\mathfrak B_w$
    is injective. Moreover, the image of $\mathfrak B$ in $\mathfrak U$ is the
    given subalgebra $\mathfrak B$, so the induced map $\phi$ is injective as well.
    Therefore $\mathfrak U \cong \mathfrak{B}_w$. 
\end{proof}

We have therefore obtained the fundamental structural description
\[
\mathfrak B\cong
	\mathfrak B(\mathfrak{osp}(1|2)),
	\qquad
	\mathfrak U(\lambda)\cong\mathfrak B_{\lambda-X^2}.
\]
In particular, the lifting $\mathfrak U(\lambda)$ is recovered from the
bosonized $\mathfrak{osp}(1|2)$ by inverting the single distinguished element
$\lambda-X^2$. This description will be the basis for the study of finite-dimensional modules and prime ideals in the next two sections.

\section{Finite-dimensional modules and prime ideals of  $\mathfrak{B}$}
\label{Bspec} 

In this section, we study the finite-dimensional $\mathfrak B$-modules and the
prime spectrum of $\mathfrak B$ using its realization as a generalized Weyl
algebra. We first determine the centre of $\mathfrak B$, then classify its
finite-dimensional simple modules, and finally describe its prime, primitive,
and completely prime ideals.

\subsection{The centre of $\gB$}
Using the relations \eqref{XYP}, one readily verifies that
\begin{equation*}
C:=-4YXP+2HP+P
\end{equation*}
is central in $\mathfrak B$. Since $XY+YX=H$, we also have
\begin{equation} \label{YXY} 
YX=\frac{1}{4}(2H-CP+1), \qquad XY=\frac{1}{4}(2H+CP-1).
\end{equation}



By Proposition~\ref{b23Aug26}, $\mathfrak U$ is a localization of
$\mathfrak B$. Hence $C$ remains central in $\mathfrak U$. In terms of the
original generators of $\mathfrak U$, it is given by
\begin{equation} \label{Cent}  
C= \frac{1}{\l}\bigl(2yx-s+\l g^2\bigr)g^{-1}=\frac{1}{\l}\bigl(-2xy+s+\l g^2\bigr)g^{-1}. 
\end{equation}
Indeed, by \eqref{YY} and \eqref{HH} we have
\[
y=(2\l Y-\sqrt{\l}\,H)P, \qquad s=-2\l H-\sqrt{\l}\,xg.
\]
Using also $x=Xg$ and $PX=-XP$, we obtain
\[
2yx=2(2\l Y-\sqrt{\l}\,H)PXg=-4\l YXPg+2\sqrt{\l}\,HXPg.
\]
Next,
\[
XPg=xg^{-1}Pg=xg-\frac{x^2}{\sqrt{\l}}
=xg-\sqrt{\l}+\sqrt{\l}\,g^2,
\]
where we used $x^2=\l(1-g^2)$. Therefore
\begin{equation*}
\begin{aligned}
2yx-s+\l g^2
&=-4\l YXPg+2\sqrt{\l}\,HXPg+2\l H+\sqrt{\l}\,xg+\l g^2\\
&=-4\l YXPg+2\sqrt{\l}\,Hxg-2\l H+2\l Hg^2+2\l H+\sqrt{\l}\,xg+\l g^2\\
&=-4\l YXPg+2\l H\left(g^2+\frac{1}{\sqrt{\l}}xg\right)
+\l\left(g^2+\frac{1}{\sqrt{\l}}xg\right)\\
&=\l(-4YXP+2HP+P)g\\
&=\l Cg.
\end{aligned}
\end{equation*}
Multiplying on the right by $g^{-1}$ proves the first equality in
\eqref{Cent}. The second equality follows from the relation $xy+yx=s$.

We now recall the definition of generalized Weyl algebras from \cite{BavulaGWA}, and realize $\mathfrak B$ as a generalized Weyl algebra.

\begin{definition}
	Let $D$ be a ring, let $\s$ be an automorphism of $D$, and let $\mathtt{a} \in Z(D)$. The \emph{generalized Weyl algebra} (GWA, for short) $A:=D[x,y;\s, \mathtt{a}]$ is a ring generated by $D$, $x$, and $y$ subject to the following defining relations:
	\begin{equation*}
	\begin{gathered}
	xd=\s(d)x, \qquad yd=\s^{-1}(d) y \quad \text{for all } d\in D, \\
	yx=\mathtt{a}, \qquad xy=\s(\mathtt{a}). 
	\end{gathered}
	\end{equation*}
\end{definition}
	The generalized Weyl algebra $A$ is $\Z$-graded, with
\begin{equation} \label{DGWA}  
A=\bigoplus_{i\in \Z} A_i, \qquad A_i=\begin{cases}
Dx^i, & \text{ for } i > 0, \\
Dy^{|i|}, & \text{ for } i \leq  0.
\end{cases}
\end{equation}

From \eqref{XYP}  and \eqref{YXY}, the algebra $\gB$ admits a realization as a generalized Weyl algebra. More precisely, let $D:=\mK[C, H, P\mid P^2=1]\cong \mK[C,H,P]/\langle P^2-1\rangle$, then
\begin{equation} \label{UGWA}  
\gB=D[X,Y;\s, \mathtt{a}:=\frac{1}{4}(2H-CP+1)]
\end{equation}
where $\s$ is the automorphism of $D$ determined by $\s(C)=C$, $\s(H)=H-1$, and $\s(P)=-P$. From \eqref{DGWA}, we obtain
\begin{equation} \label{Ugrad} 
\gB=\bigoplus_{i\geq 0}D Y^i \oplus \bigoplus_{i>0} D X^i. 
\end{equation}

\begin{proposition} \label{5July26}  
	The centre of $\gB$ is $Z(\gB)=\mK[C]$. 
\end{proposition}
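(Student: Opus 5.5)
We use the generalized Weyl algebra realization \eqref{UGWA} and the grading \eqref{Ugrad}. The inclusion $\mK[C]\subseteq Z(\gB)$ is already known, since $C$ is central. For the reverse inclusion, let $z\in Z(\gB)$ and write $z=\sum_i z_i$ with homogeneous components $z_i\in\gB_i$. Because $H\in D=\gB_0$ satisfies $[H,X]=X$ and $[H,Y]=-Y$, the operator $\operatorname{ad}_H$ acts on $\gB_i$ as follows. If $i>0$, then $[H,dX^i]=i\,dX^i$. If $i<0$, then $[H,dY^{|i|}]=i\,dY^{|i|}$. This uses that $D$ is commutative. From $[H,z]=0$ and $\operatorname{char}\mK=0$ we conclude $z_i=0$ for $i\neq0$, so $z\in D$.

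Next we determine which $d\in D$ are central. Every element of $D$ commutes with $D$, so it suffices to impose $Xd=dX$. We have $Xd=\s(d)X$, and $X$ is regular on the left in $\gB$ by the GWA structure: $D$ is a domain-like free module and the components $DX^i$ are free over $D$. Hence $\s(d)=d$. Commutation with $Y$ gives the same condition. Thus $Z(\gB)=D^{\s}$. Here one must check that $\mathtt a$ is regular in $D$, or equivalently that $\gB_1=DX$ is free; this follows from the PBW basis of $\gB(\mathfrak{osp}(1|2))$ together with Proposition~\ref{21Aug26}.

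It remains to compute $D^{\s}$. Write $D=\mK[C,H]\oplus\mK[C,H]P$ as a free $\mK[C,H]$-module. Take $d=f(C,H)+g(C,H)P$. Then
\[
\s(d)=f(C,H-1)-g(C,H-1)P.
\]
Invariance therefore gives $f(C,H-1)=f(C,H)$ and $g(C,H-1)=-g(C,H)$. In characteristic zero the first condition forces $f\in\mK[C]$, by comparing leading terms in $H$. For the second, take the leading coefficient in $H$: if $g\neq0$ has $H$-degree $n$ with leading coefficient $c(C)$, then $c=-c$, so $g=0$. Hence $D^{\s}=\mK[C]$.

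The step that needs the most care is the freeness of $D$ over $\mK[C,H]$ with basis $1,P$, together with the algebraic independence of $C$ and $H$. Equivalently, one must show that $\mK[C,H,P]/\langle P^2-1\rangle\to\gB$ is injective. We would check this using the PBW basis $Y^iH^jX^kP^\epsilon$. Writing $C=-4YXP+2HP+P$, the term of highest $YX$-degree in $C^a H^b$ is a nonzero multiple of $(YX)^aH^b$ times a power of $P$. A leading-term argument in the PBW filtration then gives linear independence.
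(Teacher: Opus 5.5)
Your proposal is correct and follows essentially the same route as the paper: an $\operatorname{ad}_H$-weight argument forces a central element into $D$, commuting with $X$ forces $\s$-invariance, and then $D^{\s}=\mK[C]$ by the translation and sign argument. The differences are minor. You work directly with the GWA grading of $\gB$, using that $DX$ is free over $D$; this freeness holds in any GWA and is not equivalent to regularity of $\mathtt a$, but freeness is all you need. The paper instead passes to the localization $\gB_X\cong D[X^{\pm1};\s]$ and intersects back. You also justify the identification $D\cong\mK[C,H,P]/\langle P^2-1\rangle$ inside $\gB$, which the paper takes for granted.
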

\begin{proof}
	Let $\gB_X$ denote the localization of $\gB$ at the Ore set $\{X^i\mid i\in \N\}$. By \eqref{UGWA}, $\gB_X$ is a skew Laurent polynomial algebra 
	\begin{equation*}
	\gB_X \cong D[X^{\pm 1};\s],
	\end{equation*}
	where $\s$ is the automorphism of $D$ determined by $\s(C)=C$, $\s(H)=H-1$, and $\s(P)=-P$.
	
	Let $z \in Z(\gB_X)$ and write $z=\sum_{i=m}^n d_i X^i$ with $d_i \in D$. The relation $Hz=zH$ implies $0=Hz-zH=\sum_{i=m}^n id_i X^i$, hence $d_i=0$ for all $i\neq 0$, and thus $z=d_0\in D$.  The condition $Xz=zX$ is equivalent to $\sigma(d_0)=d_0$, so $Z(\gB_X)=D^{\s}$.  
	
	Next, write $D=\mK[C,H]\oplus \mK[C,H]P$. Let $z=f+hP \in D^{\s}$ with $f, h\in \mK[C,H]$. The condition $\sigma(z)=z$ yields $\s(f)=f$ and $\s(h)=-h$. The second identity forces $h=0$, so $z=f\in \mK[C,H]^{\s}$. Since $\sigma(H)=H-1$ is a nontrivial translation, the fixed subalgebra of $\mK[C,H]$ is $\mK[C]$. Therefore, $D^{\sigma}=\mK[C]$.	
	Finally, $Z(\gB)=\gB\cap Z(\gB_X)=\mK[C]$. 
\end{proof}

\subsection{Finite-dimensional $\gB$-modules}
For $\epsilon\in \{\pm 1 \}$ and $\alpha \in \mK$, define
\begin{equation*}
M(\epsilon, \alpha):=\gB/\gB(P-\epsilon, \,C-\alpha,\, X).
\end{equation*}
Since $C=-4YXP+2HP+P$, we have $H-\frac{1}{2}(\epsilon \alpha-1)\in \gB(P-\epsilon, C-\alpha, X).$ Thus
\begin{equation*}
M(\epsilon, \alpha)=\gB/\gB(P-\epsilon, \,C-\alpha, \, H-\frac{1}{2}(\epsilon \alpha-1),\, X).
\end{equation*}
From \eqref{Ugrad}, it follows that 
\begin{equation*}
M(\epsilon, \alpha)=\bigoplus_{i\geq 0} \mK Y^i \bar{1}, \qquad \bar{1}:=1+\gB(P-\epsilon, \,C-\alpha, \, H-\frac{1}{2}(\epsilon \alpha-1),\, X).
\end{equation*}

The following lemma determines the action of $X$ on the standard basis of $M(\epsilon,\alpha)$.
\begin{lemma} \label{6July26}   
	For $n\in \N$, we have 
	\begin{equation*}
	XY^n \bar{1}=\begin{cases}
	-\frac{1}{2}n Y^{n-1}\bar{1}, & \text{if } n \text{ is even},\\
	\frac{1}{2}(\epsilon \alpha- n) Y^{n-1} \bar{1}, & \text{if } n \text{ is odd}.
	\end{cases}
	\end{equation*}
\end{lemma}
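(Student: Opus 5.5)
Since $X\bar 1=0$, we have $XY^n\bar 1 = [X,Y^n]\bar 1$ up to sign, and the cleanest tool is the GWA relation \eqref{YXY}: $XY=\frac14(2H+CP-1)$. The plan is to move $X$ past one $Y$ and then push the resulting element of $D$ to the right through the remaining $Y^{n-1}$, where it acts on $\bar 1$ by a scalar.

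Write $XY^n\bar 1=(XY)Y^{n-1}\bar 1=\frac14(2H+CP-1)Y^{n-1}\bar 1$. In the GWA \eqref{UGWA} we have $Yd=\sigma^{-1}(d)Y$, so $dY^{n-1}=Y^{n-1}\sigma^{n-1}(d)$ for $d\in D$. Since $\sigma(H)=H-1$, $\sigma(P)=-P$ and $\sigma(C)=C$,
\[
\sigma^{n-1}(2H+CP-1)=2H-2(n-1)+(-1)^{n-1}CP-1 .
\]
On $\bar 1$ we have $P=\epsilon$, $C=\alpha$ and $H=\frac12(\epsilon\alpha-1)$. Hence
\[
XY^n\bar 1=\frac14\bigl(\epsilon\alpha-1-2n+2-1+(-1)^{n-1}\epsilon\alpha\bigr)Y^{n-1}\bar 1 .
\]

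For $n$ even, $(-1)^{n-1}=-1$, so the scalar is $\frac14(-2n)=-\frac n2$. For $n$ odd, the scalar is $\frac14(2\epsilon\alpha-2n)=\frac12(\epsilon\alpha-n)$. This matches the claim, including $n=0$ (where the $n$ even formula gives $X\bar 1=0$ trivially).

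The only delicate point is the commutation direction, i.e. that $dY^{n-1}=Y^{n-1}\sigma^{n-1}(d)$ rather than $\sigma^{1-n}$. I would double-check it with $[H,Y]=-Y$, which gives $HY=Y(H-1)=Y\sigma(H)$, and $PY=-YP$. Both are consistent with $dY=Y\sigma(d)$.
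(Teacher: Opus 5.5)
Your proof is correct, but it takes a different route from the paper's. The paper argues by induction on $n$. It writes $XY^{n+1}\bar 1=(H-YX)Y^n\bar 1$, uses $HY^n=Y^n(H-n)$, and substitutes the inductive value of $XY^n\bar 1$, so the parity dependence comes from the two cases feeding into each other at each step.

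You instead compute in closed form. Since $XY=\sigma(\mathtt a)=\frac14(2H+CP-1)$ lies in $D$, one application of the twisting rule $dY^{n-1}=Y^{n-1}\sigma^{n-1}(d)$ reduces everything to evaluating a scalar on $\bar 1$. You checked the direction of that rule correctly against $HY=Y(H-1)$ and $PY=-YP$. The parity then appears directly through $\sigma^{n-1}(P)=(-1)^{n-1}P$.

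Your route is shorter and gives a uniform formula: the scalar is $\sigma^{n}(\mathtt a)$ evaluated at $H=\frac12(\epsilon\alpha-1)$, $P=\epsilon$. This is the same expression that appears in the simplicity criterion for $\gB^\alpha$ in Proposition~\ref{4July26}. The cost is that it relies on the GWA presentation \eqref{UGWA} and on the centrality of $C$, which you need to move $CP$ past $Y^{n-1}$. The paper's induction uses only $XY+YX=H$, $[H,Y]=-Y$, and the $H$-eigenvalue of $\bar 1$.

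Treating $n=0$ separately is necessary, since $Y^{n-1}$ is undefined there. Your opening remark about $[X,Y^n]\bar 1$ is never used, and in fact it holds exactly, not just up to sign.
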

\begin{proof}
	We prove by induction on $n$. For $n=0$, we have $X\bar{1}=0$. For $n=1$, the relation $XY+YX=H$ gives $XY\bar{1}=(H -YX) \bar{1}=\frac{1}{2}(\epsilon \alpha-1) \bar{1}$. Suppose that the formula holds for some $n\geq 1$. If $n$ is odd, then using the induction hypothesis and $HY^n=Y^n(H-n)$, we obtain
	\begin{equation*}
	\begin{aligned}
	XY^{n+1}\bar{1} &=(H- YX)Y^{n}\bar{1}\\
	&= \left(\tfrac{1}{2}(\epsilon \alpha-1)-n\right)Y^n \bar{1}-\tfrac{1}{2}(\epsilon \alpha- n) Y^{n} \bar{1}\\
	&=-\tfrac{1}{2}(n+1)Y^{n} \bar{1}.
	\end{aligned}
	\end{equation*}
	If $n$ is even, then 
	\begin{equation*}
	\begin{aligned}
	XY^{n+1}\bar{1} &=(H- YX)Y^{n}\bar{1}\\
	&= \left(\tfrac{1}{2}(\epsilon \alpha-1)-n\right)Y^n \bar{1}+\tfrac{1}{2}nY^n \bar{1}\\
	&=\tfrac{1}{2}(\epsilon \alpha -n-1) Y^n \bar{1}. 
	\end{aligned}
	\end{equation*}
	This proves the lemma.
\end{proof}

\begin{remark}
	The module $M(\epsilon, \alpha)$ is a Verma module. It can be realized on $\mK[\mathsf{y}]$ by 
	\begin{equation*}
	\begin{gathered}
	Y\cdot f=\mathsf{y}f, \quad H\cdot f=\frac{1}{2}(\epsilon \alpha-1)f-\mathsf{y}f', \quad P\cdot f(\mathsf{y})=\epsilon f(-\mathsf{y}),\\
	X\cdot f=-\frac{1}{2}f'+\frac{\epsilon \alpha}{4\mathsf{y}}(f(\mathsf{y})-f(-\mathsf{y})). 
	\end{gathered}
	\end{equation*}
	The assignment $Y^n \bar{1} \mapsto \mathsf{y}^n$ identifies $M(\epsilon, \alpha)$ with $\mK[\mathsf{y}]$ endowed with the displayed action. The differential-difference formula for $X$ is well-defined, since $\mathsf{y}^{-1}(f(\mathsf{y})-f(-\mathsf{y}))\in \mK[\mathsf{y}]$ for every $f(\mathsf{y})\in \mK[\mathsf{y}]$. Moreover, the displayed formula agrees with
	the action in Lemma~\ref{6July26} on the basis
	$\{\mathsf{y}^n\mid n\in\N\}$.
	Note that $M(\epsilon, \alpha)$ is a highest weight module with highest weight $\frac{1}{2}(\epsilon \alpha-1)$. 
\end{remark}

\begin{proposition} \label{b6July26}  
	\begin{enumerate}
		\item If $\alpha \notin \{ \epsilon n \mid n\in \N^{\rm odd}\}$, then $M(\epsilon, \alpha)$ is a simple module. 
		
		\item If $\alpha=\epsilon n$ for some $n\in \N^{\rm odd}$, then $M(\epsilon,\alpha)$ has a unique maximal submodule, and the corresponding simple quotient is isomorphic to 
		\begin{equation*}
		S(\epsilon, n):=\gB/\gB(P-\epsilon, \,C-\epsilon n, \, H-\frac{1}{2}(n-1), \, X,\,Y^n). 
		\end{equation*}
	\end{enumerate}
\end{proposition}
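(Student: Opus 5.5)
The plan is to use the explicit weight-space structure of $M(\epsilon,\alpha)$ given by Lemma~\ref{6July26}. The basis vectors $Y^n\bar 1$ are eigenvectors for $H$ with pairwise distinct eigenvalues $\frac12(\epsilon\alpha-1)-n$. They are also eigenvectors for $P$, with eigenvalue $(-1)^n\epsilon$, since $PY=-YP$. Because $H$ acts diagonalizably with distinct eigenvalues on this basis, every submodule $N$ is spanned by the basis vectors it contains. Indeed, if $v=\sum c_iY^i\bar 1\in N$, a Vandermonde argument with polynomials in $H$ shows that each $Y^i\bar 1$ with $c_i\neq 0$ lies in $N$.

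Now let $N\neq 0$ be a submodule. Let $m$ be the smallest index with $Y^m\bar 1\in N$. Applying $Y$ shows that $N=\bigoplus_{i\ge m}\Bbbk Y^i\bar 1$. If $m>0$, then $XY^m\bar 1\in N$ has no component in degrees $\ge m$, so it must vanish. By Lemma~\ref{6July26}, this is impossible for even $m>0$, since the coefficient is $-m/2\neq 0$ in characteristic zero. For odd $m$, it forces $\epsilon\alpha=m$, that is, $\alpha=\epsilon m$ with $m\in\N^{\rm odd}$.

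Conversely, suppose $\alpha=\epsilon n$ with $n$ odd. Then $XY^n\bar 1=0$, so $N_n:=\bigoplus_{i\ge n}\Bbbk Y^i\bar 1$ is stable under $X$, $Y$, $H$ and $P$, hence is a submodule. Here $X$ lowers degree by one but kills $Y^n\bar 1$, while on higher degrees it stays inside $N_n$. This gives part (1): if $\alpha\notin\{\epsilon n\}$, the only possible value of $m$ is $0$, so $N=M$.

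For part (2), the analysis above shows that the proper nonzero submodules are exactly the $N_m$ with $m$ odd and $\epsilon m=\alpha$. There is only one such $m$, namely $m=n$. Hence $N_n$ is the unique nonzero proper submodule, and therefore the unique maximal submodule. The simple quotient $M/N_n$ has basis $Y^i\bar1$ for $0\le i<n$.

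It remains to identify $M/N_n$ with $S(\epsilon,n)$. Note that $\frac12(\epsilon\alpha-1)=\frac12(n-1)$. By the description of $M(\epsilon,\alpha)$ as a quotient already given, $M/N_n=\gB/\bigl(\gB(P-\epsilon,C-\epsilon n,H-\frac12(n-1),X)+\gB Y^n\bigr)$, because $N_n=\gB Y^n\bar 1$. This left ideal is exactly the one defining $S(\epsilon,n)$.

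The main obstacle is the parity bookkeeping in Lemma~\ref{6July26}. One has to check that even $m$ never yields a singular vector and that odd $m$ does so precisely when $\epsilon\alpha=m$. Once this is done, the weight-space argument is routine.
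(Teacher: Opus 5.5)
Your proof is correct and follows essentially the paper's argument: decompose a submodule into $H$-weight spaces and use the lowering formulas of Lemma~\ref{6July26}, so that the only obstruction to reaching $\bar 1$ is the vanishing of $XY^n\bar 1$ for odd $n=\epsilon\alpha$. Your minimal-index formulation also determines the full submodule lattice $0\subset N_n\subset M(\epsilon,\alpha)$, and it justifies the identification with $S(\epsilon,n)$ through $N_n=\gB Y^n\bar 1$, a step the paper only asserts as ``clearly''.
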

\begin{proof}
	(1) Let $W$ be a nonzero submodule of $M(\epsilon, \alpha)$, and choose $0\neq w=\sum_{i=0}^n \beta_i Y^i \bar{1}\in W$ with $\beta_i\in \mK$ and $\beta_n \neq 0$. Since $H Y^i \bar{1}=(\hat{\alpha}-i) Y^i \bar{1}$ where $\hat{\alpha}:=\frac{1}{2}(\epsilon \alpha-1)$, the vectors $Y^i\bar1$ are $H$-eigenvectors with pairwise distinct eigenvalues. Hence every weight component of $w$ belongs to $W$.  In particular, $Y^n \bar{1}\in W$. Since $\alpha \notin \{ \epsilon n \mid n\in \N^{\rm odd} \}$, repeated application of Lemma~\ref{6July26} shows that $\bar{1}\in W$. Therefore, $W=M(\epsilon,\alpha)$, and $M(\epsilon,\alpha)$ is simple. 
	
	(2) Suppose $\alpha=\epsilon n$ for some $n\in \N^{\rm odd}$. By Lemma \ref{6July26}, $XY^n \bar{1}=0$. Let
	\begin{equation*}
	M'=\bigoplus_{i\geq n}\mK Y^i\bar1.
	\end{equation*}
	It is clear that $M'$ is stable under the action of the elements $P$, $X$, $Y$ and $H$. Thus $M'$ is a $\gB$-module. 
	
	Now let $W$ be a proper submodule of $M(\epsilon,\alpha)$. Since the
	$H$-weights are pairwise distinct, $W$ is a direct sum of weight
	spaces. Hence, if $W \neq 0$, then $Y^i\bar1\in W$ for some $i$.	 
	If $i<n$, repeated application of Lemma~\ref{6July26} shows that
	$\bar1\in W$, contradicting the assumption that $W$ is proper.
	Therefore every nonzero weight vector in $W$ has degree at least $n$,
	and thus
	$W\subseteq M'.$
	Hence $M'$ is the unique maximal submodule of $M(\epsilon,\alpha)$.
	Clearly, 
	$M(\epsilon,\alpha)/M'
	\cong S(\epsilon,n)$.
	Its simplicity follows from the maximality of $M'$.
\end{proof}

 Jacobson's lemma (\cite[Lemma 2]{Jacobson}) asserts that if $A$ and $B$ are linear
operators on a finite-dimensional vector space over a field of
characteristic zero and $[A,[A,B]]=0$, then $[A,B]$ is nilpotent. We now classify the finite-dimensional simple $\gB$-modules. 
\begin{theorem} \label{B6July26}  
	A complete set of isomorphism classes of finite-dimensional simple $\gB$-modules is
	\begin{equation*}
	\left\{[S(\epsilon,n)] \mid \epsilon \in \{\pm 1 \}, \,n\in \N^{\rm odd} \right\}.
	\end{equation*}
	In particular, since $\dim S(\epsilon, n)=n$, all finite-dimensional simple $\gB$-modules have odd dimension.
\end{theorem}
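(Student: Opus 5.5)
Let $V$ be a finite-dimensional simple $\gB$-module. The plan is to find a highest weight vector in $V$, map a Verma module $M(\epsilon,\alpha)$ onto $V$, and then read off the answer from Proposition~\ref{b6July26}.

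\textbf{Step 1: $X$ and $Y$ act nilpotently.} Take $A:=H$ and $B:=X$ in Jacobson's lemma. Then $[H,X]=X$ commutes with $X$, but this is not of the form $[A,[A,B]]=0$ directly. So I use $A=X^2$ and $B=H$ instead. Lemma~\ref{23Aug26} gives $[H,X^2]=2X^2$, and $X^2$ commutes with itself, so $[A,[A,B]]=[X^2,-2X^2]=0$. Jacobson's lemma then shows that $-2X^2$ is nilpotent, hence $X$ is nilpotent on $V$. For $Y$, the relation $[H,Y^2]=-2Y^2$ gives nilpotence of $Y^2$ in the same way. Alternatively, one can argue with weights: $X$ shifts $H$-eigenvalues by $+1$, and $V$ has only finitely many eigenvalues.

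\textbf{Step 2: a highest weight vector.} Since $\ker X\neq0$ is stable under $P$ (because $PX=-XP$) and under $C$ (central), and since $P$ and $C$ commute and $\mK$ is algebraically closed, I pick $0\ne v\in\ker X$ with $Pv=\epsilon v$ and $Cv=\alpha v$. Here $\epsilon=\pm1$ because $P^2=1$. This gives a nonzero $\gB$-map $M(\epsilon,\alpha)\to V$ sending $\bar1\mapsto v$. It is surjective because $V$ is simple.

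\textbf{Step 3: identifying $V$.} $M(\epsilon,\alpha)$ is infinite-dimensional and $V$ is finite-dimensional, so the kernel is a nonzero proper submodule. By Proposition~\ref{b6July26}(1), this forces $\alpha=\epsilon n$ for some $n\in\N^{\rm odd}$. By part (2) of the same proposition, the kernel lies in the unique maximal submodule $M'$. Since $V$ is simple, the kernel equals $M'$, and so $V\cong S(\epsilon,n)$.

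\textbf{Step 4: the list is complete and irredundant.} Each $S(\epsilon,n)$ is simple of dimension $n$, with basis $Y^i\bar1$ for $0\le i<n$. The pairs $(\epsilon,n)$ are distinguished by isomorphism invariants:
\begin{itemize}
\item $n=\dim S(\epsilon,n)$;
\item $\epsilon$ is the $P$-eigenvalue on the top weight space, the unique $H$-eigenspace of eigenvalue $\tfrac12(n-1)$. Equivalently, $C$ acts by the scalar $\epsilon n$.
\end{itemize}
So the modules $S(\epsilon,n)$ are pairwise non-isomorphic. The odd-dimensionality statement follows immediately.

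The only delicate point is Step 1, namely the existence of a nonzero vector in $\ker X$; the weight argument settles it most cleanly. Take an $H$-eigenvector $u$ whose eigenvalue $\mu$ is such that $\mu+1$ is not an eigenvalue of $H$ on $V$. This is possible since $V$ has only finitely many eigenvalues. Because $[H,X]=X$, the vector $Xu$ is either $0$ or an $H$-eigenvector of eigenvalue $\mu+1$, so $Xu=0$. This version does not need Jacobson's lemma at all.
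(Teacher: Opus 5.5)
Your proof is correct and follows the paper's argument: find a nonzero vector killed by $X$ that is a $P$-eigenvector, map $M(\epsilon,\alpha)$ onto the simple module, and use Proposition~\ref{b6July26} to conclude $V\cong S(\epsilon,n)$. The differences are cosmetic, and you add one check the paper leaves implicit:
\begin{itemize}
\item Jacobson's lemma applies directly with $A=X$, $B=H$, since $[X,[X,H]]=[X,-X]=0$. So the detour through $X^2$ is unnecessary, and your remark that the relation is not directly of the required form is mistaken.
\item Choosing $v$ in the $P$-stable space $\ker X$ replaces the paper's case split on $(P-1)v$.
\item You explicitly verify that the $S(\epsilon,n)$ are pairwise non-isomorphic, for example because $C$ acts on $S(\epsilon,n)$ by the scalar $\epsilon n$.
\end{itemize}
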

\begin{proof}
	Let $M$ be a finite-dimensional simple $\gB$-module. By Schur's lemma, the central element $C$ acts on $M$ by a scalar $\alpha \in \mK$. Using $[X,H]=-X$, we obtain $[X,[X,H]]=0$. Jacobson's lemma therefore implies that $X$ acts nilpotently on $M$. Thus there exists a nonzero vector $v\in M$ such that $Xv=0$. Since $P^2=1$, we have $(P+1)(P-1)v=0$. Assume first that $(P-1)v=0$. Since $M$ is simple, $v$ generates $M$. Given that $Pv=v$, $Xv=0$, and $Cv=\alpha v$, it follows that $M$ is a quotient of the module $M(1, \alpha)$. Assume now that $(P-1)v \neq 0$. Set $w=(P-1)v$. Then $Xw=0$, $(P+1)w=0$, and $Cw=\alpha w$. Since $M$ is simple, $w$ generates $M$. Hence $M$ is a simple quotient of the module $M(-1,\alpha)$. Thus, in either case, $M$ is a simple quotient of $M(\epsilon, \alpha)$ for some $\epsilon \in \{ \pm 1 \}$ and $\alpha \in \mK$. The result now follows from Proposition \ref{b6July26}. 
\end{proof}

\begin{theorem}
	Every finite-dimensional $\mathfrak{B}$-module is completely reducible. 
\end{theorem}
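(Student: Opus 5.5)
We will prove complete reducibility by showing that every extension between finite-dimensional simple $\mathfrak B$-modules splits. By induction on the length of a composition series, it suffices to show $\operatorname{Ext}^1_{\mathfrak B}(S,S')=0$ for all $S=S(\epsilon,n)$ and $S'=S(\epsilon',n')$ in the list of Theorem~\ref{B6July26}. So let $M$ be a finite-dimensional module with a submodule $S'$ and quotient $M/S'\cong S$. We aim to produce a submodule of $M$ mapping isomorphically onto $S$.

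\emph{Step 1: separate by the central character and by $H$-weights.} The centre $\mathbb{k}[C]$ (Proposition~\ref{5July26}) acts on $M$, and $M$ decomposes into generalized eigenspaces of $C$. On $S(\epsilon,n)$ the element $C$ acts by $\epsilon n$, and the values $\epsilon n$ are pairwise distinct for distinct pairs $(\epsilon,n)$. Hence if the pairs differ, $M=M_{\epsilon n}\oplus M_{\epsilon' n'}$ as modules, and the sequence splits. We may therefore assume $S'\cong S\cong S(\epsilon,n)$, so that $C-\epsilon n$ acts on $M$ with $(C-\epsilon n)^2=0$.

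Next we show that $H$ acts semisimply. Since $C=-4YXP+2HP+P$, we have
\[
2H=CP-1+4YX.
\]
Instead of working directly with this, we pass to the Casimir of the even part. The elements $E:=X^2$, $F:=-Y^2$ and $H$ span a copy of $\mathfrak{sl}_2$, up to normalization: indeed $[H,X^2]=2X^2$, and $[X^2,Y^2]$ is a multiple of $H$, which one checks from the relations~\eqref{XYP}. By Weyl's theorem, $M$ is a semisimple $\mathfrak{sl}_2$-module, so $H$ acts diagonalizably with eigenvalues in $\tfrac12\mathbb Z$ (after rescaling). Also $P$ commutes with $H$ and satisfies $P^2=1$, so $P$ is diagonalizable as well.

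\emph{Step 2: construct a splitting highest weight vector.} The highest $H$-weight of $S(\epsilon,n)$ is $\tfrac12(n-1)$. The weight space $M_{(n-1)/2}$ is two-dimensional, with $X$ acting as zero on it because the weight $\tfrac12(n+1)$ does not occur in $M$. Choose a $P$-eigenvector $v$ in $M_{(n-1)/2}$ that is not in $S'$, with $P$-eigenvalue $\epsilon$; this is possible since $P$ acts by $\epsilon$ on the top weight of both composition factors, hence by $\epsilon$ on all of $M_{(n-1)/2}$. Then $v$ satisfies $Xv=0$, $Pv=\epsilon v$ and $Hv=\tfrac12(n-1)v$.

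From $4YX=2H-CP+1$ applied to $v$ we get $0=(n-1)-\epsilon Cv+v$, that is, $Cv=\epsilon n\,v$. Thus $v$ generates a quotient of $M(\epsilon,\epsilon n)$.

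It remains to check that $Y^n v=0$. The vector $Y^nv$ has weight $\tfrac12(n-1)-n=-\tfrac12(n+1)$, which is not a weight of $M$. Hence $Y^nv=0$. Therefore $\mathfrak B v$ is a quotient of $S(\epsilon,n)$ by the presentation in Proposition~\ref{b6July26}(2), so $\mathfrak B v\cong S(\epsilon,n)$ because that module is simple. Since $v\notin S'$, we get $\mathfrak B v\cap S'=0$, and this gives the splitting.

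The main obstacle is Step 1's claim that $H$ is semisimple. This requires verifying the $\mathfrak{sl}_2$ relations for $X^2$ and $Y^2$; one expects $[X^2,Y^2]=-2H$ or a similar multiple. If that verification becomes messy, the fallback is to use directly that $\operatorname{ad}$ forces $X$ and $Y$ to shift generalized $H$-eigenspaces, and to show that $M_{(n-1)/2}$ has the required properties using only generalized weight spaces. The argument goes through unchanged once we take $v$ to be a genuine $H$-eigenvector inside the top generalized weight space, which is stable under $X$-annihilation. We can then conclude $Cv=\epsilon n v$ by applying the formula for $4YX$ and using that $C$ is central.
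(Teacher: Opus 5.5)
Your proof is correct, but it follows a different route from the paper. The step you flagged is immediate. From $[X^2,Y]=-X$ (equivalently $[Y,X^2]=X$, as in the proof of Lemma~\ref{23Aug26}) one gets $[X^2,Y^2]=[X^2,Y]Y+Y[X^2,Y]=-(XY+YX)=-H$. So $(H,\,X^2,\,-Y^2)$ is an honest $\mathfrak{sl}_2$-triple with $H$ itself as Cartan element, no rescaling is needed, and the weights are integers rather than merely in $\tfrac12\mathbb Z$.

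The fallback is therefore unnecessary, and it would not work as stated: if $H$ were not semisimple on the top generalized weight space, its only eigenline there could be the one inside $S'$. With $H$ diagonalizable, the rest goes through:
\begin{itemize}
\item the values $\epsilon n$ separate all simples;
\item $0\to S'_\mu\to M_\mu\to S_\mu\to 0$ is exact on weight spaces;
\item $P=\epsilon$ on the top weight space;
\item $4YX=2H-CP+1$ gives $Cv=\epsilon n v$ (you dropped the $v$ in $(n-1)v$);
\item $Y^nv=0$ for weight reasons.
\end{itemize}

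The paper instead reads $P$ as a parity operator. It identifies finite-dimensional $\mathfrak B$-modules with finite-dimensional $\mathfrak{osp}(1|2)$-supermodules, with $\mathfrak B$-submodules corresponding to graded submodules, and then quotes the theorem of Djokovi\'{c} and Hochschild. That proof is shorter but outsources all of the content to an external result on Lie superalgebras.

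Your argument is self-contained modulo Weyl's theorem for $\mathfrak{sl}_2$, the paper's classification (Theorem~\ref{B6July26}), and its presentation of $S(\epsilon,n)$. In effect it reproves the needed case of Djokovi\'{c}--Hochschild. It also makes the mechanism explicit. Because $C$ involves $P$, it separates $S(+,n)$ from $S(-,n)$, which are parity shifts of one another as supermodules. Only self-extensions then remain, and the semisimplicity of $H$ coming from the even part rules them out.
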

\begin{proof}
	Let $M$ be a finite-dimensional $\mathfrak{B}$-module. Since $P^2=1$, the operator $P$ is diagonalizable.
	Hence
	\begin{equation*}
	M=M_{\bar 0} \oplus M_{\bar 1}, \qquad M_{\bar 0}:=\ker(P-1), \quad M_{\bar 1}:=\ker(P+1). 
	\end{equation*}
	Because $PH=HP$ and $PX=-XP$, $PY=-YP$, the operator $H$ preserves each $M_{\bar i}$, while $X$ and $Y$ interchange $M_{\bar 0}$ and $M_{\bar 1}$. Thus $M$ acquires the structure of a
	finite-dimensional $\mathfrak{osp}(1|2)$-supermodule by declaring
	$M_{\bar 0}$ to be the even part and $M_{\bar 1}$ to be the odd part,
	with $H$ acting as an even endomorphism and $X,Y$ acting as odd
	endomorphisms.
	
	Conversely, let $V=V_{\bar 0}\oplus V_{\bar 1}$ be a finite-dimensional $\mathfrak{osp}(1|2)$-supermodule. Define an operator $P$ on $V$ by 
	\begin{equation*}
	P|_{V_{\bar 0}}=\operatorname{id}_{V_{\bar 0}}, \qquad P|_{V_{\bar 1}}=-\operatorname{id}_{V_{\bar 1}}. 
	\end{equation*}
	Then $P^2=1$, the operator $P$ commutes with the even action of $H$, and it anticommutes with the odd actions of $X$ and $Y$. Hence $V$ becomes a $\mathfrak{B}$-module. These two constructions are inverse to each other, so finite-dimensional $\mathfrak{B}$-modules are the same thing as finite-dimensional $\mathfrak{osp}(1|2)$-supermodules, and under this identification $\mathfrak{B}$-submodules correspond exactly to super $\mathfrak{osp}(1|2)$-submodules. 
	
	Djokovi\'{c} and Hochschild \cite{Djokovic-Hochschild} prove that every finite-dimensional $\mathfrak{osp}(1|2)$-supermodule is completely reducible in characteristic zero. Therefore $M$, viewed as an $\mathfrak{osp}(1|2)$-supermodule, is a direct sum of simple $\mathfrak{osp}(1|2)$-supermodules. Transporting this decomposition back through the equivalence above yields a direct sum decomposition of $M$ into simple $\mathfrak{B}$-submodules. Thus $M$ is completely reducible.  
\end{proof}

\subsection{Prime ideals of $\gB$}
Recall that a proper ideal $\gp$ of a ring $R$ is \emph{prime} if,  for all ideals $\ga$ and $\gb$ of $R$, the inclusion $\ga \gb \subseteq \gp$ implies $\ga \subseteq \gp$ or $\gb \subseteq \gp$. The set of prime ideals of $R$ is called the prime spectrum of $R$, and is denoted by $\Spec(R)$. 
A prime ideal $\gp$ is \emph{completely prime} if $R/\gp$ is a domain. The set of completely prime ideals of $R$ is denoted by $\Spec_c(R)$.  For an $R$-module $M$, its annihilator is the ideal
\begin{equation*}
\ann_R(M):=\{ r\in R \mid rm=0 \,\,\text{for all } m\in M \}. 
\end{equation*}
An ideal $\gp$ of a ring $R$ is \emph{primitive} if it is the annihilator of a simple $R$-module. The set of primitive ideals of $R$ is called the primitive spectrum of $R$, and is denoted by $\Prim(R)$.  It is well known that all primitive ideals are prime, and all maximal ideals are primitive.

For every $\alpha\in \mK$, define
\begin{equation*}
\gB^{\alpha}:=\gB/(C-\alpha)\gB.
\end{equation*}
	It follows from \eqref{UGWA} that $\gB^{\alpha}$ can be presented as a GWA in the form:
\begin{equation} \label{UaGWA} 
\gB^{\alpha} \cong \mathbf{D}[X, Y;\s,\mathtt{a}:=\frac{1}{4}(2H-\alpha P+1)], \qquad \mathbf{D}:=\mK[H, P]/\langle P^2-1 \rangle,
\end{equation}
where $\s$ is the automorphism of $\mathbf{D}$ defined by $\s(H)=H-1$
and $\s(P)=-P$. 	The algebra $\gB^{\alpha}$ is not a domain for any $\alpha \in \mK$, since $\mathbf{D}$ is not a domain.

The following result, due to Bavula \cite[Theorem 4.2]{BavulaSimpleCriteria},
provides a simplicity criterion for generalized Weyl algebras.
\begin{theorem} \label{SimpGWA}   
	Let $A=D[x, y;\s, \mathtt{a}]$ be a GWA. Then $A$ is a simple algebra if and only if
	\begin{enumerate}[label=(\roman*)]
		\item $D$ has no proper $\s$-stable ideals;
		\item no power of $\s$ is an inner automorphism of $D$;
		\item $D \mathtt{a}+D\s^n(\mathtt{a})=D$ for all $n\geq 1$;
		\item $\mathtt{a}$ is regular in $D$. 
	\end{enumerate}
\end{theorem}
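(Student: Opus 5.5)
We would prove both implications directly, using the $\Z$-grading \eqref{DGWA} of $A=D[x,y;\s,\mathtt{a}]$ and the fact that $A$ is a free left $D$-module on $\{x^i\}_{i>0}\cup\{y^j\}_{j\ge 0}$. Two formulas drive everything. First, $d\cdot(ex^i)-\s^{i}(d)^{-1}\cdots$ is not available in general, so instead we use that conjugating a homogeneous element $ex^i$ by $d\in D$ gives $d\,ex^i-ex^i\,\s^{-i}(d)\cdots$; precisely, $d(ex^i)=(de)x^i$ and $(ex^i)d=e\s^i(d)x^i$. Second, the products
\[
y^ix^i=\s^{-i+1}(\mathtt{a})\cdots\s^{-1}(\mathtt{a})\,\mathtt{a},\qquad x^iy^i=\s(\mathtt{a})\cdots\s^{i}(\mathtt{a}).
\]

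\emph{Sufficiency.} Assume (i)--(iv), and let $I\neq0$ be an ideal of $A$.
\begin{enumerate}
\item Pick $0\neq u=\sum_{i=m}^{n}u_i\in I$ with $u_i\in A_i$ and $n-m$ minimal.
\item For $d\in D$, the element $\s^{n}(d)u-ud$ has top component $0$, so it is shorter than $u$. By minimality it vanishes, which forces $\s^n(d)u_i=u_i\s^{i}(d)$-type identities on every component.
\item From these identities, if two components survived one would get a nonzero element of $D$ intertwining $\s^{i}$ and $\s^{j}$. Using (i) to make that element a unit, this would make $\s^{i-j}$ inner, contradicting (ii). So $u$ is homogeneous, say $u=ex^i$ (or $ey^i$).
\item Multiply by $y^i$ (resp.\ $x^i$) to land in $D$. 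The result is $e$ times a product of $\s$-translates of $\mathtt{a}$, which is nonzero by (iv). Hence $I\cap D\neq0$.
\item $I\cap D$ is a $\s$-stable ideal of $D$, since $xdy=\s(d)\s(\mathtt{a})$ and $ydx=\s^{-1}(d)\mathtt{a}$. Here one needs (iii) to remove the factors $\mathtt{a}$: combining $ydx$ and $y^{n}dx^{n}$ and using $D\mathtt{a}+D\s^n(\mathtt{a})=D$ shows that $\s(J)\subseteq J$ genuinely for the ideal $J$ generated. Then (i) gives $1\in I$.
\end{enumerate}

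\emph{Necessity.} We exhibit a proper nonzero ideal whenever one condition fails.
\begin{itemize}
\item[(i)] If $J\subsetneq D$ is $\s$-stable and nonzero, then $\bigoplus_{i>0}Jx^i\oplus\bigoplus_{i\ge0}Jy^i$ is an ideal. It does not contain $1$.
\item[(ii)] If $\s^n$ is inner via a unit $v$, then $z=v^{-1}x^n$ satisfies $zd=dz$ for all $d\in D$. It is normal up to the action of $x,y$, so $Az$ is a proper ideal, since it has no component in degree $0$ equal to $1$.
\item[(iii)] If $D\mathtt{a}+D\s^n(\mathtt{a})\subseteq\gm$ for a maximal ideal $\gm$, take the ideal generated by $x^{n}$ (or its analogue). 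Its degree-$0$ part lies in the ideal of $D$ generated by $\s$-translates of $\mathtt{a}\,\s^n(\mathtt{a})$-type products. We would check that this stays inside a proper ideal.
\item[(iv)] If $d\mathtt{a}=0$ with $d\neq0$, then $dy$ satisfies $dy\,x=0$. The two-sided ideal generated by the $D$-span of such annihilating elements is nonzero. It is proper because its degree-$0$ part lies in $\ann_D(\mathtt{a})$-related ideals.
\end{itemize}

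The main obstacle is step 3 of sufficiency together with the precise use of (iii) in step 5: reducing a minimal-length element to a homogeneous one and then clearing the $\mathtt{a}$-factors. We would first try the standard Bavula argument, where minimality plus (ii) kills the off-diagonal components, and (iii) is applied through the identity $1=d_1\mathtt{a}+d_2\s^n(\mathtt{a})$ to manufacture $1$ from $y^nx^n$ and $x^ny^n$-type products lying in $I$. Since this is the cited \cite[Theorem 4.2]{BavulaSimpleCriteria}, in the paper itself we would simply invoke that reference.
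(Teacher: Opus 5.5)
The paper gives no argument of its own for this theorem; it just quotes \cite[Theorem 4.2]{BavulaSimpleCriteria}. Your closing fallback is therefore exactly what the paper does. Taken as a self-contained proof, however, your sketch has a genuine gap in step 5 of sufficiency. For $J=I\cap D$, the identities $xdy=\sigma(d)\sigma(\mathtt a)$ and $ydx=\sigma^{-1}(d)\mathtt a$ give only $\sigma(J)\sigma(\mathtt a)\subseteq J$ and $\sigma^{-1}(J)\mathtt a\subseteq J$, not $\sigma$-stability. Combining $ydx$ with $y^ndx^n=\sigma^{-n}(d)\prod_{j=0}^{n-1}\sigma^{-j}(\mathtt a)$ cannot remove the factor $\mathtt a$: the two elements involve different translates of $d$, so no B\'ezout identity $1=r\mathtt a+r'\sigma^n(\mathtt a)$ applies to them. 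The missing idea is to apply (i) to $\sum_{k\in\mathbb Z}\sigma^k(J)$ rather than to $J$. That sum is a nonzero $\sigma$-stable ideal, hence equals $D$. Write $1=\sum_{|k|\le N}\sigma^k(d_k)$ with $d_k\in J$ and apply $\sigma^N$; this gives $1=\sum_{l=0}^{M}\sigma^l(e_l)$ with $e_l\in J$ and $M=2N$. Hence $x^M=\sum_l x^le_lx^{M-l}\in I$. Then $x^My^M=\prod_{j=1}^M\sigma^j(\mathtt a)$ and $y^Mx^M=\prod_{j=0}^{M-1}\sigma^{-j}(\mathtt a)$ both lie in $I$. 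By (iii), each factor of the first is comaximal with each factor of the second, so $1\in I$. This is the use of (iii) you allude to at the end, but it needs a pure power of $x$ in $I$, and (i) is what supplies it.

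Your necessity argument for (ii) also fails as written, because $Az$ need not be proper. Take $D=\Bbbk$, $\sigma=\mathrm{id}$, $\mathtt a=1$. Then $A=\Bbbk[x^{\pm1}]$, conditions (i), (iii), (iv) hold and (ii) fails, yet $z=x$ is a unit. In general the degree-$0$ part of $Ax^n$ is $D\prod_{j=0}^{n-1}\sigma^{-j}(\mathtt a)$, which is all of $D$ whenever $\mathtt a$ is a unit. For commutative $D$ the repair is as follows. Since $\sigma^n=\mathrm{id}$, $x^n$ is central. If $\mathtt a$ is not a unit, $Ax^n$ is a proper nonzero graded ideal. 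If $\mathtt a$ is a unit, then $A=D[x^{\pm1};\sigma]$ and $A(x^n-1)$ is proper, as one sees by comparing highest and lowest degree terms.

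Two smaller points. First, in step 3, (i) cannot make a single element a unit. It should be applied to the nonzero $\sigma$-stable ideal $\{e\in D:(\sigma^n(d)-\sigma^m(d))e=0 \text{ for all } d\}$, where $n>m$ are the extreme degrees of $u$. This ideal then equals $D$ and forces $\sigma^{n-m}=\mathrm{id}$. Second, steps 2--3 and the normality of $v^{-1}x^n$ tacitly assume $D$ is commutative; otherwise the top component of $\sigma^n(d)u-ud$ is $[\sigma^n(d),e_n]x^n$. This is harmless here, since the paper only applies the criterion to the commutative rings $\mathbf D$ and $\mathcal D$. Your arguments for the necessity of (i), (iii) and (iv) are sound once the checks are carried out. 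The degree-$0$ part of $Ax^nA$ lies in $D\mathtt a+D\sigma^n(\mathtt a)$, and the degree-$0$ part of the ideal generated by $\operatorname{ann}_D(\mathtt a)y$ is $0$.
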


The following proposition determines precisely when $\gB^\alpha$ is simple and describes its prime ideals in the exceptional cases.

\begin{proposition} \label{4July26}  
	\begin{enumerate}
		\item 	The algebra $\gB^{\alpha}$ is simple if and only if $\alpha \notin \{ \pm n \mid n\in \N^{\rm odd} \}$. 
		
		\item 	Let $\alpha \in \Z^{\rm odd}$. Then $\gB^{\alpha}$ is prime and has a unique nonzero prime ideal. 
	\end{enumerate}
\end{proposition}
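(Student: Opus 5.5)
Part (1) I will prove by checking Bavula's criterion (Theorem~\ref{SimpGWA}) for the presentation \eqref{UaGWA}. Since $P^2=1$, the idempotents $e_\pm=\frac12(1\pm P)$ give $\mathbf D=\mK[H]e_+\oplus\mK[H]e_-\cong \mK[H]\times\mK[H]$, and $\s$ swaps the two factors (because $\s(P)=-P$) while acting on each by $H\mapsto H-1$. For condition~(i), a $\s$-stable ideal of $\mathbf D$ has the form $I_+e_+\oplus I_-e_-$. Stability under $\s$ forces $I_-=\tau(I_+)$ and $I_+=\tau(I_-)$, where $\tau(H)=H-1$, so $I_+$ is $\tau^2$-stable. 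In characteristic zero, a nonzero proper ideal of $\mK[H]$ has finitely many roots and so cannot be stable under translation by $2$. Hence (i) holds. For condition~(ii), $\mathbf D$ is commutative and $\s^n\neq\mathrm{id}$ for $n\ge1$ (for odd $n$ it moves $P$, for even $n$ it moves $H$), so no power of $\s$ is inner. For condition~(iv), write $\mathtt a=\frac14\bigl((2H+1-\alpha)e_++(2H+1+\alpha)e_-\bigr)$. Both components are nonzero polynomials, so $\mathtt a$ is regular.

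The key step is condition~(iii). We have $\s^n(\mathtt a)=\frac14(2H-2n+1-(-1)^n\alpha P)$. Componentwise, $D\mathtt a+D\s^n(\mathtt a)=D$ if and only if on each factor the two linear polynomials have no common root. On the $e_+$ factor the roots are $\frac{\alpha-1}{2}$ and $\frac{(-1)^n\alpha+2n-1}{2}$. For even $n$ these coincide only if $n=0$, which is excluded. For odd $n$ they coincide exactly when $\alpha=-n$. On the $e_-$ factor the same computation gives the obstruction $\alpha=n$. Therefore (iii) fails exactly when $\alpha\in\{\pm n\mid n\in\N^{\rm odd}\}$, and Bavula's theorem yields (1).

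For (2), let $\alpha=\epsilon m$ with $m\in\N^{\rm odd}$. I will take the candidate ideal $\mathfrak p:=\ann_{\gB^\alpha}S(\epsilon,m)$, which is nonzero because $S(\epsilon,m)$ is finite dimensional while $\gB^\alpha$ is not. It is primitive, hence prime. Primeness of $\gB^\alpha$ itself I would prove by the standard GWA argument, using the $\Z$-grading \eqref{DGWA}. A nonzero ideal $I$ contains a nonzero homogeneous-leading component, and by multiplying by powers of $X$ or $Y$ one lands in $I\cap\mathbf D\neq0$, via $Y^kX^k=\prod_i\s^{-i}(\mathtt a)$ being regular. Alternatively, I would localize at $X$ to get $\mathbf D[X^{\pm1};\s]$, which is simple because $\mathbf D$ is $\s$-simple and no power of $\s$ is inner. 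Since $X$ is regular and $\gB^\alpha$ embeds in this simple ring, $\gB^\alpha$ is prime.

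The main obstacle is showing that any nonzero prime $\mathfrak q$ equals $\mathfrak p$. My plan is as follows. The localization $\gB^\alpha_X$ is simple, so $\mathfrak q$ contains a power of $X$. By the symmetric argument it also contains a power of $Y$. Then $\gB^\alpha/\mathfrak q$ is generated over $\mathbf D$ by nilpotent $X,Y$, and $\mathfrak q\cap\mathbf D$ is a nonzero ideal. It contains $Y^kX^k=\prod_{i<k}\s^{-i}(\mathtt a)$, so $\mathfrak q\cap\mathbf D$ is cofinite and $\gB^\alpha/\mathfrak q$ is finite dimensional. A finite-dimensional prime quotient is simple Artinian, hence it is the annihilator of a finite-dimensional simple module. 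By Theorem~\ref{B6July26} that module is some $S(\epsilon',n)$, and $C=\alpha$ forces $\epsilon' n=\epsilon m$, i.e.\ $S(\epsilon,m)$. Thus $\mathfrak q=\mathfrak p$. The most delicate point is making the passage from ``$I\neq0$'' to ``$I\cap\mathbf D$ nonzero and $\s$-relevant'' rigorous given the non-domain $\mathbf D$. I expect to handle this by working componentwise with $e_\pm$.
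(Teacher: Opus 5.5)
Your proposal is correct and follows essentially the same route as the paper: Bavula's criterion checked componentwise via $e_\pm$ for (1), and for (2) simplicity of the skew Laurent localizations at $X$ and $Y$ (giving primeness and powers of $X,Y$ in any nonzero prime), then finite codimension via $Y^kX^k\in\mathbf D$ and the classification of finite-dimensional simple modules. One small slip: for odd $n$ the common root occurs on the $e_+$ component when $\alpha=n$ and on the $e_-$ component when $\alpha=-n$ (you have them swapped), but this does not affect the conclusion since only the union $\{\pm n\}$ matters.
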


\begin{proof}
	(1) We verify the hypotheses of the simplicity criterion for GWAs (Theorem \ref{SimpGWA}).

	(i) Set $e_{\pm}=\frac{1}{2}(1\pm P)$. Then $e_+$ and $e_-$ are orthogonal central idempotents of $\mathbf D$, and
	$\mathbf{D}=\mathbf{D}e_+\oplus \mathbf{D}e_- \cong \mK[H]\oplus \mK[H]$. Hence every ideal $I$ of $\mathbf D$ decomposes as
	$I=I_+e_+\oplus I_-e_-$ where $I_{\pm}$ are ideals of $\mK[H]$.
	 Suppose that $I$ is $\sigma$-stable. Since $\sigma(P)=-P$, the
	 automorphism $\sigma$ interchanges the two components. Moreover, $\sigma^2(H)=H-2$. Therefore, each $I_{\pm}$ is invariant under the nontrivial translation
	 $H\mapsto H-2$. The only ideals of $\mK[H]$ invariant under such a
	 translation are $0$ and $\mK[H]$. Thus $I_\pm\in\{0,\mK[H]\}$. The compatibility between the two components imposed by $\s$ then implies that $I\in\{0,\mathbf{D}\}$. Hence $\mathbf{D}$ has no proper $\s$-stable ideals. 
	
	(ii) Since $\mathbf{D}$ is commutative, all inner automorphisms are trivial. Hence no positive power of $\s$ is inner.

	(iii)  A direct computation yields
	\begin{equation*}
	\s^n(\mathtt{a})=\frac{1}{4}\left(2H+(-1)^{n+1}\alpha P-2n+1\right). 
	\end{equation*}
	If $n$ is even, then $\sigma^n(\mathtt{a})-\mathtt{a}=-\frac{1}{2}n$, which is a nonzero scalar. Hence $\mathbf{D}\mathtt{a}+\mathbf{D}\sigma^n(\mathtt{a})=\mathbf{D}$. 
	Assume now that $n$ is odd. Then
	\begin{equation*}
	\mathbf{D}\mathtt{a}+\mathbf{D}\sigma^n(\mathtt{a})
	=\mathbf{D}\left(2H-\alpha P+1,\; \alpha P-n\right),
	\end{equation*}
	which equals $\mathbf{D}$ if and only if $\alpha\neq \pm n$. Hence
	\begin{equation*}
	\mathbf{D}\mathtt{a}+\mathbf{D}\sigma^n(\mathtt{a})=\mathbf{D} \text{ for all } n\geq 1
	\quad\Longleftrightarrow\quad
	\alpha\notin \{\pm n \mid n\in\mathbb{N}^{\mathrm{odd}}\}.
	\end{equation*}

	(iv) Writing $\mathtt{a}=\frac{1}{4}(2H-\alpha+1)e_{+}+\frac{1}{4}(2H+\alpha+1)e_{-}$,  both components are nonzero polynomials in $\mK[H]$, hence $\mathtt{a}$ is regular in $\mathbf{D}$.
	
	Thus Theorem~\ref{SimpGWA} implies that $\gB^\alpha$ is simple exactly
	when $\alpha \notin \{ \pm n \mid n\in \N^{\rm odd} \}$. 
	
	(2) Let $\alpha=\epsilon n$ with $\epsilon \in \{\pm 1 \}$ and $n\in \N^{\rm odd}$. By part (1), $\gB^{\alpha}$ is not simple.   Let $\gB^{\alpha}_X$ and $\gB^{\alpha}_Y$ be the localizations of $\gB^{\alpha}$ at the Ore sets $\{ X^i \mid i\in \N \}$ and $\{Y^i \mid i\in \N \}$, respectively. From \eqref{UaGWA} we have
	\begin{equation*}
	\gB^{\alpha}_X \cong \mathbf{D}[X^{\pm 1};\s], \qquad \gB^{\alpha}_Y \cong \mathbf{D}[Y^{\pm 1};\s^{-1}],
	\end{equation*}
	where $\s$ is the automorphism of $\mathbf{D}$ defined by $\s(H)=H-1$
	and $\s(P)=-P$. 	
	Since $\mathbf{D}$ has no proper $\s$-stable ideals and no power of $\s$ is inner,  both localizations are simple by \cite[Theorem 1.8.5]{MR}. In particular, they are prime, and hence $\gB^\alpha$ is prime.
	
	Let $\gp$ be a nonzero prime ideal of $\gB^{\alpha}$. Since $\gB^{\alpha}_X$ and $\gB^{\alpha}_Y$ are simple,  there exist $m,n \geq 1$ such that $X^m \in \gp$ and $Y^n\in \gp$. Replacing $m$ and $n$ by their maximum, we may assume $m=n$. A direct computation yields
	\begin{equation} \label{YnXn} 
	Y^n X^n =\frac{1}{4^n} \prod_{i=1}^{n}\Big(2H+(-1)^i \alpha P+2i-1\Big)\in \gp \cap \mathbf{D}. \\
	\end{equation}
	From \eqref{UaGWA} and \eqref{DGWA}, we obtain
	\begin{equation*}
	\gB^{\alpha}=\bigoplus_{i\geq 0} \mathbf{D}Y^i \oplus \bigoplus_{i>0} \mathbf{D}X^i. 
	\end{equation*}
	Since $X^n,Y^n\in\gp$, together with \eqref{YnXn} this implies that $\gB^\alpha/\gp$ is finite-dimensional over $\mK$. By \cite[Theorem 1.38]{Lorenz}, every finite-codimensional prime ideal is maximal. Thus every nonzero prime ideal of $\gB^\alpha$ is a maximal ideal of finite codimension and consequently is the annihilator of a finite-dimensional simple $\gB^\alpha$-module. For the fixed value $\alpha=\epsilon n$, Proposition~\ref{b6July26} shows that $\gB^\alpha$ has a unique finite-dimensional simple module, namely $S(\epsilon,n)$. Hence every nonzero prime ideal of $\gB^\alpha$ is equal to $\ann_{\gB^{\alpha}}(S(\epsilon, n))$. Therefore $\gB^\alpha$ has a unique nonzero prime ideal.
\end{proof}

\begin{proposition} \label{A6July26}  
	Every nonzero ideal of $\gB$ has nonzero intersection with $Z(\gB)$. Furthermore, $\gB$ is a prime algebra. 
\end{proposition}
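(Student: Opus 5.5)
We work in the localization $\gB_X\cong D[X^{\pm1};\s]$ used in the proof of Proposition~\ref{5July26}, where $D=\mK[C,H,P]/\langle P^2-1\rangle$.

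\textbf{Step 1: reduce to a nonzero ideal of $\gB_X$.} Let $I$ be a nonzero ideal of $\gB$. Since $X$ is regular in the GWA $\gB$ and $\{X^i\}$ is an Ore set, $I_X:=\gB_X I\gB_X$ is a nonzero ideal of $\gB_X$. Moreover $I_X\cap\gB\supseteq I$, and every element of $I_X$ becomes an element of $I$ after multiplication by a suitable power of $X$ on the left.

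\textbf{Step 2: extract a nonzero element of $D$.} Take $0\neq z=\sum_{i=m}^{n} d_iX^i\in I$ with $n-m$ minimal. Multiplying by a power of $X$, we may assume $m=0$ and $d_0\ne0$. Since $[H,z]=\sum_i i\,d_iX^i\in I$ has a shorter support, minimality forces $[H,z]=0$, so $z=d_0\in D\cap I$.

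\textbf{Step 3: cut down to $\mK[C,H]$.} Write $d_0=f+hP$ with $f,h\in\mK[C,H]$. Using $e_\pm=\frac12(1\pm P)$, one of $d_0e_+$ or $d_0e_-$ is nonzero. After possibly multiplying by $X$ on the left (which applies $\s$ and so swaps $e_\pm$), we get a nonzero element $u\,e_+\in I\cap D$ with $u\in\mK[C,H]$. Then
\[
Xue_+X^{-1}=\s(u)e_-,
\]
and this element lies in $I_X$. So $\s(u)e_-\cdot X^k\in I$ for suitable $k$; rather than use that directly, we take the ideal $J:=I\cap D e_+$ in the form $J=J'e_+$ with $J'\subseteq\mK[C,H]$.

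\textbf{Step 4: produce a central element.} From $X^2Je_+X^{-2}\subseteq I_X\cap D$, the ideal $J'$ satisfies $\s^2(J')\subseteq J'$ up to the localization issue; this is handled by noting that $I_X\cap D$ is a $\s$-stable ideal of $D$ containing $ue_+$. Hence $J_0:=(I_X\cap D)e_+$ corresponds to a nonzero ideal of $\mK[C,H]$ stable under $H\mapsto H-2$.

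For a nonzero ideal $K\subseteq\mK[C,H]$ stable under this translation, pick $0\neq u\in K$ of minimal $H$-degree. Then $u-\s^2(u)\in K$ has smaller $H$-degree, hence is zero, so $u\in\mK[C]$. Therefore $K\cap\mK[C]\neq0$.

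Applying this to both idempotent components and multiplying, we get $0\ne p(C)\in I_X$. Then $p(C)X^k\in I$ for some $k$, and the remaining task is to remove $X^k$. I expect this to be the main obstacle. The first thing to try is to use $Y^kX^k\in D$ (a product of factors as in \eqref{YnXn}), giving $p(C)Y^kX^k\in I\cap D$. This is an element of $\mK[C,H]e_\pm$-type, and we can rerun the translation argument inside $I\cap D$ itself. The point is that $I\cap D$ is stable under $d\mapsto \s(d)$ up to multiplication by $\mathtt a$-factors, via $Xd\,Y=\s(d)XY=\s(d)\s(\mathtt a)$. Concretely, for $0\neq q(H)\in \mK[C][H]$ in $I\cap D e_+$ of minimal $H$-degree, we use
\[
X^2qY^2=\s^2(q)X^2Y^2\quad\text{and}\quad qX^2Y^2,
\]
which are both in $I$. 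Their difference $(\s^2(q)-q)X^2Y^2$ has $H$-degree less than $\deg q+\deg(X^2Y^2)$; here we compare with $qX^2Y^2$ rather than with $q$. Arguing with the minimal degree among elements of the form $q\cdot(\text{products of }\s^j(\mathtt a))$ then forces $\s^2(q)=q$, so $q\in\mK[C]$, possibly after working over $\Frac\mK[C]$ and clearing denominators. This yields $I\cap Z(\gB)\neq0$ by Proposition~\ref{5July26}.

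\textbf{Step 5: primeness.} If $I,J$ are nonzero ideals with $IJ=0$, pick $0\ne p(C)\in I$ and $0\ne q(C)\in J$. Then $p(C)q(C)=0$ in $\gB$. But $\mK[C]\subseteq D$ acts freely on the basis \eqref{Ugrad}, so this is impossible. Hence $\gB$ is prime.
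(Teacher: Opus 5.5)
Steps 1--3 and the first half of Step 4 are essentially sound, with two small repairs. In Step 2, if $m>0$, multiply on the left by $Y^m$; this works because $Y^mX^m\in D$ is a product of the regular elements $\s^{-j}(\mathtt a)$. In Step 3, use $X(\cdot)Y$ rather than left multiplication by $X$, so that you stay inside $D$. With these, you correctly show that the nonzero $\s$-stable ideal $I_X\cap D$ meets $\mK[C]$, so $p(C)X^k\in I$ for some $0\neq p(C)\in\mK[C]$ and $k\geq 0$.

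The gap is exactly the step you flagged, removing $X^k$, and your proposed resolution does not work. Suppose $q$ has minimal $H$-degree $d$ in $I\cap De_+$. Then $(\s^2(q)-q)X^2Y^2$ can have $H$-degree $d+1$, because $X^2Y^2=\s^2(\mathtt a)\s(\mathtt a)$ has $H$-degree $2$. So there is no contradiction with minimality. You could instead minimize $\deg_H q$ over all $q$ with $q\pi\in I$ for some product $\pi$ of the $\s^j(\mathtt a)$, using $(\s^2(q)-q)\,\pi\,\s^2(\pi)X^2Y^2\in I$. This does give $\s^2(q)=q$, hence $q\in\mK[C]$, but only together with $q(C)\pi\in I$. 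The factor $\pi$ is not central and nothing in your argument removes it, so $I\cap Z(\gB)\neq0$ does not follow.

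Some genuinely new input is needed here, because your argument never uses the specific form of $\mathtt a$ or the fact that $C$ is an indeterminate. Every step runs verbatim in $\gB^{\alpha}$ for $\alpha=\epsilon n$ with $n\in\N^{\rm odd}$. There, the image of $\ann_{\gB}(S(\epsilon,n))$ is a nonzero proper ideal that contains $X^n$ but no nonzero scalar, so the argument cannot be valid as it stands.

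The paper supplies the missing input by localizing at $\mK[C]\setminus\{0\}$ and applying Bavula's criterion. The decisive point is $\CD\mathtt a+\CD\s^n(\mathtt a)=\CD$, which holds because $C^2-n^2$ is a unit in $\mK(C)$.

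Within your framework, a direct fix is a descent in $k$:
\begin{itemize}
\item From $[Y,X^2]=X$ one gets $[Y,X^{2j}]=jX^{2j-1}$ and $YX^{2j-1}+X^{2j-1}Y=X^{2j-2}(H+j-1)$.
\item Since $XY=\frac14(2H+CP-1)$, also $X^{2j-1}Y=\frac14X^{2j-2}(2H+CP-1)$.
\item Hence $p(C)X^{2j}\in I$ implies $p(C)X^{2j-1}\in I$.
\item Combining the anticommutator identity with right multiplication by $Y$, $p(C)X^{2j-1}\in I$ implies $p(C)X^{2j-2}(CP-2j+1)\in I$.
\item Multiplying on the right by $CP+2j-1$ gives $p(C)\bigl(C^2-(2j-1)^2\bigr)X^{2j-2}\in I$.
\end{itemize}
Iterating yields a nonzero polynomial in $C$ inside $I$. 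The factors $C^2-(2j-1)^2$ are exactly where the genericity of $C$ enters, mirroring the paper's condition (iii).

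With this in place, your Step 5 is fine. The resulting proof avoids Bavula's criterion, at the cost of explicit $\mathfrak{osp}(1|2)$ identities.
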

\begin{proof}
	Let $\CZ:=\mK[C]\setminus\{0\}$ and denote by $\gB_{\mathcal{Z}}$ the localization of $\gB$ at $\CZ$.
	 Set $\mathcal{K}:=\mK(C)$. By \eqref{UGWA}, $\gB_{\CZ}$ is a generalized Weyl algebra
	 \begin{equation*}
	 \gB_{\CZ}=\CD[X,Y;\s, \mathtt{a}:=\frac{1}{4}(2H-CP+1)], \quad \CD:=\mathcal{K}[H,P]/\langle P^2-1\rangle
	 \end{equation*}
	 where $\s$ is the automorphism of $\CD$ determined by $\s(C)=C$, $\s(H)=H-1$ and $\s(P)=-P$. We prove that $\gB_{\CZ}$ is simple by applying Theorem \ref{SimpGWA}. 
	 As in the proof of  Proposition \ref{4July26}, replacing $\mathbf{D}$ by $\CD$ and $\mK$ by $\mathcal{K}$, one verifies that $\CD$ has no proper $\s$-stable ideals, no powers of $\s$ is inner, and $\mathtt{a}$ is regular. It remains to show that $\CD\mathtt{a}+\CD\s^n(\mathtt{a})=\CD$ for all $n \geq 1$. 
	 
	 A direct computation gives
	 \begin{equation*}
	 \s^n(\mathtt{a})=\frac{1}{4}\left(2H+(-1)^{n+1}C P-2n+1\right).
	 \end{equation*}
	 If $n$ is even, then $\sigma^n(\mathtt{a})-\mathtt{a}=-\frac{1}{2}n$, which is a nonzero scalar. Hence  $\CD\mathtt{a}+\CD\sigma^n(\mathtt{a})=\CD$. Suppose now that $n$ is odd. Then $\s^{n}(\mathtt{a})-\mathtt{a}=\frac{1}{2}(CP-n)$, so that
	 \begin{equation*}
	 \CD\mathtt{a}+\CD\sigma^n(\mathtt{a})
	 =\CD\left(2H-CP+1,\; CP-n\right). 
	 \end{equation*}
	 Since $P^2=1$, we have $(CP+n)(C P-n)=C^2-n^2 \in \CD\mathtt{a}+\CD\sigma^n(\mathtt{a})$. As $C^2-n^2$ is a nonzero element of the field $\mathcal{K}$, it is invertible. Therefore, $\CD\mathtt{a}+\CD\sigma^n(\mathtt{a})=\CD$. Hence $\gB_{\CZ}$ is simple. It follows that every nonzero ideal of $\gB$ must meet $\CZ$. Since $\CZ=\mK[C]\setminus\{0\}\subseteq Z(\gB)$, every nonzero ideal of $\gB$ intersects $Z(\gB)=\mK[C]$ nontrivially.	 
	 
	 Finally, let $\mathfrak a,\mathfrak b$ be nonzero ideals of $\gB$ satisfying $\ga \gb=0$, then their extensions in $\gB_{\CZ}$ are nonzero ideals whose product is zero, contradicting simplicity of $\gB_{\CZ}$. Hence $\gB$ is prime.
\end{proof}

\begin{theorem} \label{SpecB}  
	Set 
	$\ga_{\epsilon,n}:=\ann_\gB(S(\epsilon,n))$ for $\epsilon\in\{\pm1\}$ and $n\in\mathbb{N}^{\mathrm{odd}}$.

	\begin{enumerate}
		\item The prime spectrum of $\gB$ is 
		\begin{equation*}
			\Spec(\gB)=\{ \langle 0 \rangle \} \,\cup \,\{\langle C-\alpha \rangle \mid \alpha \in \mK \}\,\cup \, \{ \ga_{\epsilon,n}\mid \epsilon\in\{\pm 1\},  n\in \N^{\rm odd} \}. 
		\end{equation*}
		The inclusions among these prime ideals are indicated in the following diagram
		
		\begin{equation*}
		\begin{tikzpicture}[scale=0.6, every node/.style={font=\small}]
		
		\filldraw[black] (0,0) circle (2pt) 
		node[below=2pt] {$\langle 0 \rangle$};
		
		\draw[thick, black] (-5.5,2) -- (5.5,2);
		
		\foreach \x in {-5.5,-5.4,-5.3,...,5.5} {
			\draw[gray!60] (0,0) -- (\x,2);
		}
		
		\foreach \x in {-5.5,-5.4,-5.3,...,5.5} {
			\filldraw[black] (\x,2) circle (0.8pt);
		}
		
		\foreach \x/\lab in {
			-5/{\ga_{-,5}},
			-3/{\ga_{-,3}},
			-1/{\ga_{-,1}},
			1/{\ga_{+,1}},
			3/{\ga_{+,3}},
			5/{\ga_{+,5}}
		}
		{
			\filldraw[black] (\x,2) circle (1pt);
			\draw[gray!70] (\x,2) -- (\x,3);
			\filldraw[black] (\x,3) circle (1.2pt)
			node[above=3pt] {$\lab$};
		}
		
		\node at (-6,2) {$\cdots$};
		\node at (6,2) {$\cdots$};
		\node at (-6,3) {$\cdots$};
		\node at (6,3) {$\cdots$};
		
		\node[right] at (6.4,2) 
		{$\{\langle C-\alpha\rangle\}$};
		
		\end{tikzpicture}
		\end{equation*}

		\item The maximal spectrum of $\gB$ is
		\begin{equation*}
		\Max(\gB)= \{\langle C-\alpha \rangle \mid \alpha \in \mK \setminus \Z^{\rm odd}\}\,\, \cup \,\, \{ \ga_{\epsilon ,n} \mid \epsilon\in \{\pm 1\},  n\in \N^{\rm odd} \}. 
		\end{equation*}
		
		\item Every nonzero prime ideal of $\gB$ is primitive; that is, 
		\begin{equation*}
		\Prim(\gB)=\Spec(\gB) \setminus \{\langle 0 \rangle \}.
		\end{equation*}
		
		\item The set of completely prime ideals of $\gB$ is $\Spec_c(\gB)=\{ \ga_{-,1}, \,\ga_{+,1} \}$.  
	\end{enumerate}
\end{theorem}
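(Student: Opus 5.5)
Let $\gp$ be a nonzero prime ideal of $\gB$. By Proposition~\ref{A6July26}, $\gp\cap \mK[C]\neq 0$. Since $C$ is central and $\gp$ is prime, $\gp\cap\mK[C]$ is a prime ideal of $\mK[C]$, hence equal to $(C-\alpha)$ for some $\alpha\in\mK$, as $\mK$ is algebraically closed. (If $f(C)g(C)\in\gp$, then $f(C)\gB g(C)\subseteq\gp$, so one factor lies in $\gp$.) Thus $\gp\supseteq\langle C-\alpha\rangle=(C-\alpha)\gB$, and $\gp$ corresponds to a prime ideal of $\gB^\alpha$. Proposition~\ref{4July26} now does the classification:
\begin{itemize}
\item if $\alpha\notin\Z^{\rm odd}$, then $\gB^\alpha$ is simple, so $\gp=\langle C-\alpha\rangle$, and this ideal is maximal;
\item if $\alpha=\epsilon n$ with $n\in\N^{\rm odd}$, then $\gB^\alpha$ is prime, so $\langle C-\alpha\rangle$ is prime, and its unique nonzero prime is the annihilator of $S(\epsilon,n)$.
\end{itemize}
Since $S(\epsilon,n)$ has central character $C=\epsilon n$, that annihilator is $\ga_{\epsilon,n}$. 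The zero ideal is prime by Proposition~\ref{A6July26}. This gives the list in (1), and (2) follows at once. For the inclusions in the diagram: $\langle C-\alpha\rangle\subsetneq\ga_{\epsilon,n}$ exactly when $\alpha=\epsilon n$. The inclusion is proper because $\gB^\alpha$ is infinite-dimensional by \eqref{UaGWA}, whereas $\gB/\ga_{\epsilon,n}$ is finite-dimensional. Distinct $\alpha$ give incomparable ideals because $\mK[C]\cap\langle C-\alpha\rangle=(C-\alpha)$; this uses freeness of $\gB$ over $\mK[C]$, which follows from \eqref{Ugrad}, since $D$ is free over $\mK[C]$.

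For (3), each $\ga_{\epsilon,n}$ is primitive by definition. When $\alpha\notin\Z^{\rm odd}$, $\langle C-\alpha\rangle$ is maximal, hence primitive. The remaining case, and the main obstacle, is $\alpha=\epsilon n$: here $\langle C-\alpha\rangle$ is not maximal, so we need a faithful simple $\gB^{\alpha}$-module. I would use a Verma module $M(\epsilon',\alpha)$ that is simple. By Proposition~\ref{b6July26}(1), $M(-\epsilon,\epsilon n)$ is simple, because $\epsilon n\notin\{-\epsilon m\mid m\in\N^{\rm odd}\}$. It is infinite-dimensional and $C$ acts on it by $\alpha$. Its annihilator is therefore a prime ideal containing $\langle C-\alpha\rangle$. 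It cannot be $\ga_{\epsilon,n}$, since it has infinite codimension and $\gB/\ga_{\epsilon,n}$ is finite-dimensional. By the classification in (1), the annihilator equals $\langle C-\alpha\rangle$, which is therefore primitive. Finally, $\langle0\rangle$ is not primitive: a faithful simple module would have $C$ acting by a scalar, since $\mK[C]$ has countable dimension and Quillen/Dixmier's lemma applies. Then some $C-\alpha$ would annihilate the module, contradicting faithfulness.

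For (4), $\gB$ itself is not a domain because $(P-1)(P+1)=0$. Likewise $\gB/\langle C-\alpha\rangle=\gB^\alpha$ is not a domain, since the same relation holds there and $P\pm1\notin\langle C-\alpha\rangle$, which is clear from the GWA basis. For $\ga_{\epsilon,n}$ with $n\geq3$, the quotient is $\operatorname{End}(S(\epsilon,n))\cong M_n(\mK)$ by density, which is not a domain. For $n=1$ the quotient is $\mK$, so $\ga_{\pm,1}$ are completely prime.
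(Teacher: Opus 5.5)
Your proof is correct. Parts (1), (2) and (4) follow the paper's argument: you reduce to $\gB^\alpha$ via Proposition~\ref{A6July26}, apply Proposition~\ref{4July26}, and use density for the finite-dimensional quotients. You also supply details the paper leaves implicit. These include why $\gp\cap\mK[C]$ is a prime of $\mK[C]$, why $\langle C-\alpha\rangle\subsetneq\ga_{\epsilon,n}$ is proper, and why distinct $\langle C-\alpha\rangle$ are incomparable. For incomparability you do not need freeness over $\mK[C]$: otherwise the nonzero scalar $\alpha-\beta$ would lie in the proper ideal $\langle C-\beta\rangle$.

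The genuine difference is in (3), for $\alpha=\epsilon n\in\Z^{\rm odd}$.

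\emph{The paper's route} is abstract. It uses that $\gB$ is a Jacobson ring, so $\langle C-\alpha\rangle$ is an intersection of primitive ideals. Since the only prime strictly above $\langle C-\alpha\rangle$ is $\ga_{\epsilon,n}$, the ideal $\langle C-\alpha\rangle$ must itself be primitive.

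\emph{Your route} is constructive. You exhibit a faithful simple $\gB^\alpha$-module: the Verma module $M(-\epsilon,\epsilon n)$ of opposite parity. It is simple by Proposition~\ref{b6July26}(1) and infinite-dimensional. Being cyclic, its annihilator has infinite codimension, so it cannot be $\ga_{\epsilon,n}$. By (1), the annihilator is therefore $\langle C-\alpha\rangle$.

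What each buys: your argument avoids the Jacobson property of $\gB$, which the paper asserts here without reference. It also identifies the minimal primitive ideal over each odd integer as the annihilator of an explicit module, in analogy with the antidominant Verma module for $\mathfrak{sl}_2$. The paper's argument is shorter.

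Both routes still need the fact that central elements act by scalars on simple $\gB$-modules, in order to rule out $\langle 0\rangle$. Since $\mK$ is not assumed uncountable, justify this by Quillen's lemma or the Nullstellensatz for $\gB$, not by the countable-dimension form of Dixmier's lemma.
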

\begin{proof}
	(1) By Proposition \ref{A6July26}, $\gB$ is prime, so $\langle 0 \rangle$ is a prime ideal.	 Let $\gp$ be a nonzero prime ideal of $\gB$. Again by Proposition \ref{A6July26}, $\gp \cap Z(\gB) \neq 0$. Since $Z(\gB)=\mK[C]$, there exists $\alpha\in\mK$ such that $\langle C-\alpha\rangle \subseteq \gp$.

	If $\alpha \notin \Z^{\rm odd}$, then Proposition~\ref{4July26}(1) implies that
	$\gB^\alpha=\gB/\langle C-\alpha\rangle$ is simple. Hence
	$\langle C-\alpha\rangle$ is maximal, and therefore $\gp=\langle C-\alpha\rangle$. 
	
	Now suppose that $\alpha\in\Z^{\rm odd}$. By Proposition~\ref{4July26}(2),
	$\langle C-\alpha\rangle$ is prime and there exists a unique prime ideal properly
	containing it. Writing $\alpha=\epsilon n$ with
	$\epsilon\in\{\pm1\}$ and $n\in\N^{\rm odd}$, this unique prime ideal is $\ann_\gB(S(\epsilon,n))=\ga_{\epsilon,n}.$ Therefore all nonzero prime ideals are exactly those listed in the statement.

    (2) This follows immediately from the inclusions of prime ideals described in part (1).
    
    (3) Since $Z(\mathfrak B)=\mK[C]$ is a polynomial algebra by
    Proposition~\ref{5July26}, the zero ideal is not primitive. Every
    maximal ideal is primitive, so it remains to consider the prime ideals
    $\langle C-\alpha\rangle$ with $\alpha\in\mathbb Z^{\mathrm{odd}}$.    
    Fix such an $\alpha$, and write $\alpha=\epsilon n$ with
    $\epsilon\in\{\pm1\}$ and $n\in\mathbb N^{\mathrm{odd}}$. Since
    $\mathfrak B$ is a Jacobson ring, the prime ideal
    $\langle C-\alpha\rangle$ is the intersection of the primitive ideals
    containing it. By part~(1), the only prime ideals containing
    $\langle C-\alpha\rangle$ are
    $\langle C-\alpha\rangle$ and $\mathfrak a_{\epsilon,n}$.
    Since $\mathfrak a_{\epsilon,n}$ is maximal, it is primitive. Hence
    $\langle C-\alpha\rangle$ itself is primitive.    
    Thus every nonzero prime ideal of $\mathfrak B$ is primitive.

	(4)  Since $\gB$ is not a domain, $\langle0\rangle$ is not completely prime.  For every $\alpha\in\mK$, the algebra $\gB^\alpha$ is not a domain by \eqref{UaGWA}, so $\langle C-\alpha\rangle$ is not completely prime. Finally, $\gB/\ga_{\epsilon,n} \cong M_n(\mK)$ is a matrix algebra, which is a domain if and only if $n=1$. Hence $\ga_{-,1}$ and $\ga_{+,1}$ are completely prime, whereas $\ga_{\epsilon,n}$ is not completely prime for $n>1$.
\end{proof}

\section{Finite-dimensional modules and prime ideals of $\mathfrak{U}$}
\label{Uspec}  

In this section we transfer the structural results obtained for $\mathfrak B$ to $\mathfrak U$. We first show that the finite-dimensional module theory of $\mathfrak U$ is completely determined by that of $\mathfrak B$, and then determine the prime spectrum of $\mathfrak U$ using the localization description established in Proposition \ref{b23Aug26}.

\subsection{Finite-dimensional $\mathfrak{U}$-modules}
The following theorem provides explicit relation between $\mathfrak B$ and $\mathfrak U$ on finite-dimensional modules.

\begin{theorem} \label{A24Aug26}   
Restriction along the inclusion $\mathfrak B\subset\mathfrak U$ induces a bijection between the finite-dimensional $\mathfrak U$-modules and the finite-dimensional $\mathfrak B$-modules. More precisely, every finite-dimensional $\mathfrak B$-module admits a unique $\mathfrak U$-module structure extending its given $\mathfrak B$-action. Under this correspondence, the $\mathfrak B$-submodules and $\mathfrak U$-submodules coincide. In particular, simplicity is preserved.
\end{theorem}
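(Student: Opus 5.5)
The plan is to exploit Proposition~\ref{b23Aug26}, which identifies $\mathfrak U$ with the Ore localization $\mathfrak B_w$ at $w=\lambda-X^2$. The standard fact about localizations is that a $\mathfrak B$-module $M$ extends to a $\mathfrak B_w$-module if and only if $w$ acts bijectively on $M$. In that case the extension is unique, because $w^{-1}$ must act as the inverse of the operator $w_M$. Everything then reduces to showing that $w$ acts invertibly on every finite-dimensional $\mathfrak B$-module.

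First I will show $w$ is invertible on each finite-dimensional simple $\mathfrak B$-module. By Theorem~\ref{B6July26} such a module is some $S(\epsilon,n)$. By the proof of that theorem (Jacobson's lemma), or directly from the structure of $S(\epsilon,n)$ as a quotient of $M(\epsilon,n)$, $X$ acts nilpotently. Hence $X^2$ is nilpotent and $w=\lambda-X^2$ is invertible, since $\lambda\neq0$. This argument does not actually need simplicity. On any finite-dimensional $\mathfrak B$-module $M$, Jacobson's lemma applied to $[X,[X,H]]=0$ shows $X_M$ is nilpotent, so $w_M=\lambda(1-\lambda^{-1}X_M^2)$ is invertible with a polynomial inverse in $X_M$. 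Alternatively, one can use complete reducibility and apply the simple case to each summand. Either way $w$ acts bijectively.

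Next I construct the extension and prove uniqueness. Given a finite-dimensional $\mathfrak B$-module $M$, define the action of $b w^{-k}\in\mathfrak B_w$ by $b_M (w_M)^{-k}$. The universal property of Ore localization, applied to the algebra homomorphism $\mathfrak B\to \operatorname{End}_{\Bbbk}(M)$ which sends $w$ to a unit, gives a unique extension $\mathfrak B_w\to\operatorname{End}_{\Bbbk}(M)$. Composing with the isomorphism $\mathfrak U\cong\mathfrak B_w$ produces the $\mathfrak U$-structure. Restriction and extension are mutually inverse: a $\mathfrak U$-module restricts to a $\mathfrak B$-module on which $w$ is already invertible, and uniqueness shows that re-extending gives back the same action. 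Since $\mathfrak B\subset\mathfrak U$ is a subalgebra, this also sends $\mathfrak U$-homomorphisms to $\mathfrak B$-homomorphisms compatibly.

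Finally, for submodules: every $\mathfrak U$-submodule is a $\mathfrak B$-submodule. Conversely, let $N\subseteq M$ be a $\mathfrak B$-submodule. Then $w_M$ preserves $N$ and is injective on the finite-dimensional space $N$, so $w_M|_N$ is bijective and $w_M^{-1}(N)=N$. Hence $N$ is stable under $\mathfrak B_w=\mathfrak U$, since every element has the form $bw^{-k}$. Simplicity is therefore preserved. The only substantive step is the invertibility of $w$, which rests on the nilpotence of $X$ and on $\lambda\neq0$. I expect no serious obstacle; the one point to state carefully is that finite-dimensionality is what makes invertibility on $M$ restrict to invertibility on the submodule $N$.
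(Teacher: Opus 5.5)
Your proof is correct, but it gets existence and uniqueness by a different mechanism than the paper. The key input is shared: Jacobson's lemma gives nilpotency of $X$ on any finite-dimensional $\mathfrak B$-module.

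\textbf{The paper's route.} It does not invoke Proposition~\ref{b23Aug26} here. It writes down explicit operators on $M$,
\[
g:=(1+\sqrt{\lambda}^{-1}X)^{-1}P,\qquad x:=Xg,\qquad y:=(2\lambda Y-\sqrt{\lambda}H)P,
\]
where $(1+\sqrt{\lambda}^{-1}X)^{-1}$ is a finite geometric series in $X$. It asserts without details that these satisfy \eqref{DefU}. Uniqueness is then read off from $P=(1+\sqrt{\lambda}^{-1}X)g$, $X=xg^{-1}$ and \eqref{YY}. Stability of $\mathfrak B$-submodules follows because these operators are polynomial expressions in the $\mathfrak B$-action.

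\textbf{Your route.} You use the isomorphism $\mathfrak U\cong\mathfrak B_w$, which is induced by the inclusion and so is compatible with restriction. Combined with the universal property of Ore localization, existence and uniqueness both follow from the single fact that $w=\lambda-X^2$ acts invertibly. The two constructions agree, since
\[
(1+\sqrt{\lambda}^{-1}X)^{-1}=\sqrt{\lambda}\,w^{-1}(\sqrt{\lambda}-X).
\]

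\textbf{What each buys.}
\begin{itemize}
\item Your approach needs no relation-checking. The verification the paper leaves to the reader (essentially the computation behind the map $\Psi$ in Proposition~\ref{21Aug26}) is replaced by a standard universal property.
\item Your existence and uniqueness argument visibly works for any $\mathfrak B$-module on which $w$ acts bijectively, not only finite-dimensional ones.
\item The paper's approach yields explicit formulas for the actions of $g$, $x$, $y$. The subsequent corollary uses these to show restriction is fully faithful.
\item Your framework gives full faithfulness just as easily: a $\mathfrak B$-linear map commutes with $w$, hence with $w^{-1}$. You would need to state this if you want the categorical version.
\end{itemize}
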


\begin{proof}
	Let $M$ be a finite-dimensional $\mathfrak{B}$-module. Since $[X,H]=-X$, we have $[X,[X,H]]=0$. Jacobson's lemma implies that $X$ acts nilpotently on $M$. 
	
	Since $X$ is nilpotent, $1+\sqrt{\lambda}^{-1}X$ is invertible on $M$, with inverse given by the finite sum
	\begin{equation*}
	(1+\sqrt{\lambda}^{-1}X)^{-1}=\sum_{n=0}^{N-1} (-\sqrt{\lambda}^{-1} X)^n
	\end{equation*}
	for $N$ sufficiently large. Define endomorphisms of $M$ by 
	\begin{equation*}
	g:=(1+\sqrt{\lambda}^{-1}X)^{-1}P, \quad x:=Xg, \quad y:=(2\lambda Y-\sqrt{\lambda}H)P. 
	\end{equation*}
	These  are exactly the formulas expressing the generators $g$, $x$, $y$ of $\mathfrak{U}$ in terms of $P$, $H$, $X$, $Y$, together with the inverse of $1+\sqrt{\lambda}^{-1}X$. These endomorphisms satisfy the defining relations \eqref{DefU}. Hence $M$ acquires a $\mathfrak U$-module structure extending its given $\mathfrak B$-action.
	
	This extension is unique. Indeed, on any finite-dimensional $\mathfrak{U}$-module the restricted $\mathfrak B$-action again has nilpotent $X$, and the relations
	\begin{equation*}
	P=(1+\sqrt{\lambda}^{-1}X)g, \qquad X=xg^{-1}
	\end{equation*}
	imply $g=(1+\sqrt{\lambda}^{-1}X)^{-1}P$ and $x=Xg$. By \eqref{YY}, we also have $y=(2\lambda Y-\sqrt{\lambda}H)P$. Thus the actions of $g$, $x$, and $y$ are uniquely determined by the restricted $\mathfrak B$-action. Hence the extension constructed above is unique.
	
	Thus restriction and extension are inverse on finite-dimensional modules. Moreover, if $W \subseteq M$ is a $\mathfrak{U}$-submodule, then it is automatically a $\mathfrak{B}$-submodule. Conversely, if $W\subseteq M$ is a $\mathfrak{B}$-submodule, then the formulas above show that $W$ is stable under
	$g$, $x$, and $y$, hence under $\mathfrak{U}$. Therefore the lattices of submodules coincide, and simplicity is preserved. 
	\end{proof}

\begin{corollary} \label{B24Aug26}  
	The category
	of finite-dimensional $\mathfrak{U}$-modules is equivalent to the
	category of finite-dimensional $\mathfrak{B}$-modules. Under this
	equivalence, simple objects correspond to simple objects.
\end{corollary}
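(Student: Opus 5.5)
The plan is to deduce Corollary~\ref{B24Aug26} directly from Theorem~\ref{A24Aug26} by packaging the bijection on objects into a functor and checking that it is full and faithful. Let $\mathcal F_{\mathfrak U}$ and $\mathcal F_{\mathfrak B}$ denote the categories of finite-dimensional $\mathfrak U$- and $\mathfrak B$-modules. Restriction along $\mathfrak B\subset\mathfrak U$ gives a functor
\[
\operatorname{Res}\colon \mathcal F_{\mathfrak U}\to\mathcal F_{\mathfrak B}.
\]
It is the identity on underlying vector spaces and on linear maps, and so it is faithful. By Theorem~\ref{A24Aug26}, every object of $\mathcal F_{\mathfrak B}$ is the restriction of a (unique) $\mathfrak U$-module. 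Hence $\operatorname{Res}$ is surjective on objects, in particular essentially surjective.

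The only point needing an argument is fullness. Let $M,N\in\mathcal F_{\mathfrak U}$ and let $f\colon M\to N$ be a $\mathfrak B$-linear map. I will show that $f$ commutes with $g$, $x$ and $y$. By the uniqueness part of the proof of Theorem~\ref{A24Aug26}, on any finite-dimensional $\mathfrak U$-module these generators act by
\[
g=(1+\sqrt{\lambda}^{-1}X)^{-1}P,\qquad x=Xg,\qquad y=(2\lambda Y-\sqrt{\lambda}H)P.
\]
Here $(1+\sqrt{\lambda}^{-1}X)^{-1}$ is the finite geometric sum $\sum_{n=0}^{N-1}(-\sqrt{\lambda}^{-1}X)^n$, which is valid because $X$ acts nilpotently on both $M$ and $N$. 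One can take $N$ large enough to serve for both modules. Then the action of $g$ on each module is given by one and the same element of $\mathfrak B$, namely this finite sum times $P$. Since $f$ commutes with every element of $\mathfrak B$, it commutes with $g$, and therefore also with $x=Xg$ and with $y$. So $f$ is $\mathfrak U$-linear.

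An alternative route to the same conclusion avoids the explicit formulas. $\mathfrak U=\mathfrak B_w$ by Proposition~\ref{b23Aug26}, and $w$ acts invertibly on the restriction of any $\mathfrak U$-module. A $\mathfrak B$-map between $\mathfrak B_w$-modules then automatically commutes with $w^{-1}$, hence is $\mathfrak B_w$-linear.

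Combining these steps, $\operatorname{Res}$ is fully faithful and essentially surjective, so it is an equivalence of categories. Since $\mathfrak B$-submodules and $\mathfrak U$-submodules of a finite-dimensional module coincide (Theorem~\ref{A24Aug26}), $M$ is simple over $\mathfrak U$ if and only if $\operatorname{Res}M$ is simple over $\mathfrak B$. Thus simple objects correspond to simple objects. I expect no real obstacle here. The only mildly delicate point is fullness, namely checking that the inverse of $1+\sqrt{\lambda}^{-1}X$ is a polynomial in $X$ and therefore respected by $\mathfrak B$-maps; the localization argument via $w$ handles it cleanly.
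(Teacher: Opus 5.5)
Your proposal is correct and follows essentially the same route as the paper. Restriction is faithful, essentially surjective by Theorem~\ref{A24Aug26}, and full because $g$, $x$, $y$ act through elements of $\mathfrak B$; your remark that a single truncated geometric series in $X$ serves for both $M$ and $N$ makes the paper's terse fullness argument precise. Simplicity is preserved because the submodule lattices coincide.

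Your alternative fullness argument via $\mathfrak U\cong\mathfrak B_w$ is also valid. It shows more: restriction is fully faithful on all $\mathfrak U$-modules, not only the finite-dimensional ones.
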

\begin{proof}
	By Theorem~\ref{A24Aug26}, restriction from $\mathfrak{U}$ to
	$\mathfrak{B}$ is bijective on objects, and that every finite-dimensional
	$\mathfrak{B}$-module admits a unique extension to a finite-dimensional
	$\mathfrak{U}$-module.	Thus restriction is essentially surjective.
	Moreover, let $M$ and $N$ be finite-dimensional $\mathfrak U$-modules. A linear map $f\colon M\to N$ is $\mathfrak U$-linear if and only if it is $\mathfrak B$-linear. Indeed, the generators $g$, $x$, and $y$ of $\mathfrak U$ are determined by the $\mathfrak B$-action through the formulas in the proof of Theorem~\ref{A24Aug26}. Hence restriction is fully faithful and therefore an equivalence of categories. The final assertion follows from the fact that $\mathfrak B$-submodules and $\mathfrak U$-submodules coincide.
\end{proof}

\subsection{Prime ideals of $\mathfrak U$}
The following proposition reduces the classification of prime ideals of $\mathfrak U$ to that of $\mathfrak B$.
\begin{proposition}  \label{a5July26}  
	There is an inclusion-preserving bijection
	\begin{equation*}
	\Spec(\gB) \longrightarrow \Spec(\mathfrak{U}), \qquad \gp \mapsto \gp \mathfrak{U},
	\end{equation*}
	whose inverse is given by $\gq \mapsto \gq \cap \gB$. 
\end{proposition}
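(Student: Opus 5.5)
The plan is to apply the standard correspondence for Ore localizations of Noetherian rings. By Proposition~\ref{b23Aug26}, $\mathfrak U=\mathfrak B_w$, where $S=\{w^n\}$ and $w=\lambda-X^2$. Since $\mathfrak B$ is Noetherian, \cite[Theorem 10.20]{MR}-type results (Goodearl--Warfield, Theorem 10.20) give an inclusion-preserving bijection between the prime ideals of $\mathfrak B$ disjoint from $S$ and the primes of $\mathfrak B_S$. This bijection is $\mathfrak p\mapsto \mathfrak p\mathfrak B_S$, with inverse given by contraction. Moreover, for prime $\mathfrak p$ disjoint from $S$, the extension $\mathfrak p\mathfrak B_S$ equals the two-sided ideal $\mathfrak p\mathfrak U$. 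Indeed, $\mathfrak p\mathfrak B_S=\{ps^{-1}\}$ is a two-sided ideal, and it coincides with the ideal generated by $\mathfrak p$ in $\mathfrak U$. So the statement reduces to showing that \emph{no} prime ideal of $\mathfrak B$ meets $S$.

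To verify this, I would go through the list in Theorem~\ref{SpecB}. The zero ideal does not meet $S$, since $w$ is regular; in fact $w=\lambda g^{-2}$ is a unit in $\mathfrak U\supset\mathfrak B$. For $\langle C-\alpha\rangle$, I would argue that $w^n\notin\langle C-\alpha\rangle$. The element $w$ lies in degree $0$ of the $\mathbb Z$-grading \eqref{Ugrad}, since $X^2$ has degree $2$ but $w=\lambda-X^2$ is inhomogeneous. A cleaner route is to use the GWA presentation \eqref{UaGWA} of $\mathfrak B^\alpha$. The image of $w^n$ there is $(\lambda-X^2)^n$, whose degree-$0$ component is $\lambda^n\neq0$. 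Since $\mathfrak B^\alpha=\bigoplus_i \mathbf D Y^i\oplus\bigoplus_{i>0}\mathbf D X^i$ is a direct sum, this image is nonzero. Hence $S\cap\langle C-\alpha\rangle=\emptyset$.

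For the maximal primes $\mathfrak a_{\epsilon,n}$, I would note that $w$ acts invertibly on $S(\epsilon,n)$. There $X$ is nilpotent, by Jacobson's lemma as in Theorem~\ref{B6July26}, so $\lambda-X^2$ is invertible on the module because $\lambda\neq0$. Thus $w^n$ never annihilates $S(\epsilon,n)$, and $w^n\notin\mathfrak a_{\epsilon,n}$. Alternatively, by Theorem~\ref{A24Aug26}, $S(\epsilon,n)$ extends to a $\mathfrak U$-module, on which $w=\lambda g^{-2}$ is invertible.

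Combining these, every prime of $\mathfrak B$ is disjoint from $S$, and the localization correspondence gives the desired bijection with inverse $\mathfrak q\mapsto\mathfrak q\cap\mathfrak B$. The only real care point is identifying $\mathfrak p\mathfrak B_S$ with $\mathfrak p\mathfrak U$ as a two-sided ideal. This follows because $S$ is a two-sided Ore set in a Noetherian ring, and extended ideals of primes disjoint from $S$ are two-sided (Goodearl--Warfield, Theorem 10.18). I expect that step to be a citation rather than a real obstacle.
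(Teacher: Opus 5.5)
Your proof is correct, but you handle the key step, showing that $S=\{w^n\}$ avoids every prime of $\mathfrak B$, differently from the paper.

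\textbf{The paper's route.} It never uses the classification in Theorem~\ref{SpecB}. From $[H,w]=2w-2\lambda$ it gets $[H,w^n]-2nw^n=-2n\lambda w^{n-1}$. So if $n\ge1$ is minimal with $w^n$ in a proper ideal $I$, then $w^{n-1}\in I$ because $\lambda\neq0$, which is impossible. This shows at once that \emph{every} proper two-sided ideal of $\mathfrak B$ is disjoint from $S$, with no prior knowledge of $\Spec(\mathfrak B)$. The bijection then comes from \cite[Proposition 2.1.16(vii)]{MR}.

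\textbf{Your route.} You check the three families of Theorem~\ref{SpecB} one at a time:
\begin{itemize}
\item the zero ideal is trivial;
\item for $\langle C-\alpha\rangle$ you use the $\mathbb Z$-grading: $C$ has degree $0$, so the ideal is graded, and $w^n\in\langle C-\alpha\rangle$ would put its degree-$0$ component $\lambda^n$ into a proper ideal;
\item for $\mathfrak a_{\epsilon,n}$ you use that $X$ is nilpotent on $S(\epsilon,n)$, which makes $\lambda-X^2$ invertible there.
\end{itemize}
All three checks are valid. There is no circularity, since Theorems~\ref{SpecB} and~\ref{A24Aug26} come before this proposition.

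\textbf{Comparison.} Your route depends on the full classification of $\Spec(\mathfrak B)$ and gives disjointness only for primes, not for all proper ideals. In exchange, it shows concretely why each prime survives; for instance, on finite-dimensional modules $w$ acts like the unit $\lambda g^{-2}$.

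\textbf{Two small clean-ups.} Delete the sentence saying $w$ lies in degree $0$ while also calling it inhomogeneous; it contradicts itself, and the graded argument you give next is the right one. Also fix the citation: the Goodearl--Warfield result you invoke is not ``Theorem 10.20'' of \cite{MR}. The corresponding statement in McConnell--Robson, which the paper uses, is \cite[Proposition 2.1.16(vii)]{MR}.
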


\begin{proof}
	Recall that $w=\lambda-X^2$, and set $S=\{ w^n \mid n \in \N \}$. Let $I$ be a two-sided ideal of $\mathfrak{B}$. We first show that every proper two-sided ideal of $\mathfrak B$ is disjoint from $S$. Let $I$ be a proper ideal of $\mathfrak B$, and suppose, to the contrary, that $w^n\in I$ for some $n\in\mathbb N$. Choose $n$ minimal with this property. Since $[H,X]=X$, we have 
	\begin{equation*} 
	[H,w]=[H,\lambda-X^2]=2w-2\lambda. 
	\end{equation*} 
	Hence 
	\begin{equation*} 
	[H,w^n]=2n w^n-2n\lambda w^{n-1}.
	 \end{equation*} 
	 Since $w^n\in I$ and $I$ is a two-sided ideal,
	  \begin{equation*} 
	  [H,w^n]-2n w^n=-2n\lambda w^{n-1}\in I.
	  \end{equation*}
	   As $\lambda\neq0$, it follows that $w^{n-1}\in I$. By the minimality of $n$, we must have $n=1$. Thus $w\in I$, and consequently  $[H,w]=2w-2\lambda\in I$. Therefore $2\lambda\in I$, and hence $1\in I$, a contradiction. Thus $I\cap S=\varnothing.$
	   
	   By \cite[Proposition 2.1.16(vii)]{MR}, extension and contraction give an inclusion-preserving bijection between $\Spec(\mathfrak B_w)$ and the prime ideals of $\mathfrak B$ disjoint from $S$. By the preceding argument, every prime ideal of $\mathfrak B$ is disjoint from $S$. Thus extension and contraction induce a bijection 
	   \begin{equation*} 
	   \Spec(\mathfrak B)\xrightarrow{\sim}\Spec(\mathfrak B_w). 
	   \end{equation*} 
	   Finally, Proposition \ref{b23Aug26} identifies $\mathfrak B_w$ with $\mathfrak U$. Under this identification, the extension and contraction maps are precisely \begin{equation*} 
	   \mathfrak p\mapsto\mathfrak p\mathfrak U, \qquad \mathfrak q\mapsto\mathfrak q\cap\mathfrak B. 
	   \end{equation*} 
	   This proves the proposition.
\end{proof}

\begin{theorem} \label{SpecU}   
	Set $\gm_{\epsilon, n}:=\ann_{\mathfrak{U}}(S(\epsilon,n))$ for $\epsilon\in \{\pm 1\}$ and $n\in \N^{\rm odd}$. 
	\begin{enumerate}
	\item The prime spectrum of $\mathfrak{U}$ is 
	\begin{equation*}
	\Spec(\mathfrak{U})=\{ \langle 0 \rangle \} \,\cup \,\{\langle C-\alpha \rangle \mid \alpha \in \mK \}\,\cup \, \{ \gm_{\epsilon,n}\mid \epsilon\in\{\pm 1\},  n\in \N^{\rm odd} \}. 
	\end{equation*}
	The inclusions of prime ideals are indicated in the following diagram
	
	\begin{equation*}
	\begin{tikzpicture}[scale=0.6, every node/.style={font=\small}]
	
	\filldraw[black] (0,0) circle (2pt) 
	node[below=2pt] {$\langle 0 \rangle$};
	
	\draw[thick, black] (-5.5,2) -- (5.5,2);
	
	\foreach \x in {-5.5,-5.4,-5.3,...,5.5} {
		\draw[gray!60] (0,0) -- (\x,2);
	}
	
	\foreach \x in {-5.5,-5.4,-5.3,...,5.5} {
		\filldraw[black] (\x,2) circle (0.8pt);
	}
	
	\foreach \x/\lab in {
		-5/{\gm_{-,5}},
		-3/{\gm_{-,3}},
		-1/{\gm_{-,1}},
		1/{\gm_{+,1}},
		3/{\gm_{+,3}},
		5/{\gm_{+,5}}
	}
	{
		\filldraw[black] (\x,2) circle (1pt);
		\draw[gray!70] (\x,2) -- (\x,3);
		\filldraw[black] (\x,3) circle (1.2pt)
		node[above=3pt] {$\lab$};
	}
	
	\node at (-6,2) {$\cdots$};
	\node at (6,2) {$\cdots$};
	\node at (-6,3) {$\cdots$};
	\node at (6,3) {$\cdots$};
	
	\node[right] at (6.4,2) 
	{$\{\langle C-\alpha\rangle\}$};
	
	\end{tikzpicture}
	\end{equation*}
	
	\item The maximal spectrum of $\mathfrak{U}$ is
	\begin{equation*}
	\Max(\mathfrak{U})= \{\langle C-\alpha \rangle \mid \alpha \in \mK \setminus \Z^{\rm odd}\}\,\, \cup \,\, \{ \gm_{\epsilon ,n} \mid \epsilon\in \{\pm 1\},  n\in \N^{\rm odd} \}. 
	\end{equation*}
	
	\item Every nonzero prime ideal of $\mathfrak{U}$ is primitive; that is, 
	\begin{equation*}
	\Prim(\mathfrak{U})=\Spec(\mathfrak{U}) \setminus \{\langle 0 \rangle \}.
	\end{equation*}
	
	\item The set of completely prime ideals of $\mathfrak{U}$ is $\Spec_c(\mathfrak{U})=\{ \gm_{-,1}, \, \gm_{+,1} \}$.  
\end{enumerate}
\end{theorem}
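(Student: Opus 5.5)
The plan is to transport Theorem~\ref{SpecB} across the inclusion-preserving bijection of Proposition~\ref{a5July26}, $\gp\mapsto\gp\mathfrak U$, with inverse $\gq\mapsto\gq\cap\gB$. Three things have to be checked: the images of the listed primes of $\gB$ are the listed primes of $\mathfrak U$; primitivity is preserved; and complete primeness is preserved.

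\emph{Part (1).} Extension of $\langle 0\rangle$ is $\langle 0\rangle$. Since $C$ is central in $\mathfrak U$ (Proposition~\ref{b23Aug26}), the extension of $\langle C-\alpha\rangle_{\gB}$ is $(C-\alpha)\mathfrak U=\langle C-\alpha\rangle_{\mathfrak U}$. For the remaining primes I would show $\ga_{\epsilon,n}\mathfrak U=\gm_{\epsilon,n}$, where $S(\epsilon,n)$ carries the unique $\mathfrak U$-structure given by Theorem~\ref{A24Aug26}. First, $\gm_{\epsilon,n}\cap\gB=\ann_\gB(S(\epsilon,n))=\ga_{\epsilon,n}$, because the $\mathfrak U$-action restricts to the given $\gB$-action. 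Second, $S(\epsilon,n)$ is a simple $\mathfrak U$-module by Theorem~\ref{A24Aug26}, so $\gm_{\epsilon,n}$ is primitive, hence prime. The bijection then forces $\ga_{\epsilon,n}\mathfrak U=\gm_{\epsilon,n}$. The inclusion diagram is carried over directly, since the bijection preserves inclusions.

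\emph{Parts (2) and (3).} Maximal ideals correspond under an inclusion-preserving bijection of prime spectra, and every maximal ideal is prime, so (2) follows from Theorem~\ref{SpecB}(2). For (3), the ideals $\langle C-\alpha\rangle$ with $\alpha\notin\Z^{\rm odd}$ are maximal, hence primitive, and the $\gm_{\epsilon,n}$ were shown primitive above. For $\alpha=\epsilon n\in\Z^{\rm odd}$, I repeat the Jacobson argument: $\mathfrak U$ is a finitely generated Noetherian PI-free algebra of countable dimension, an Ore localization of $\gB$, so it is a Jacobson ring (generic flatness/Nullstellensatz). Then $\langle C-\alpha\rangle$ is the intersection of the primitive ideals containing it, which are among $\langle C-\alpha\rangle$ and $\gm_{\epsilon,n}$. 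If $\langle C-\alpha\rangle$ were not primitive, it would equal $\gm_{\epsilon,n}$, contradicting the bijection.

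An alternative that avoids the Jacobson property is the following. Since $\langle C-\alpha\rangle_\gB$ is primitive, take a simple $\gB$-module $M$ annihilated by it; we may take $M$ infinite-dimensional, because otherwise its annihilator is $\ga_{\epsilon,n}$. One could then try $M_w=\mathfrak U\otimes_\gB M$, but this is the riskier route. For $\langle 0\rangle$, it is not primitive because $Z(\mathfrak U)\supseteq\mK[C]$: a primitive Noetherian algebra of countable dimension over algebraically closed $\mK$ has centre $\mK$ (Schur's lemma, Dixmier), and $C$ is transcendental in $\mathfrak U$ since $\mK[C]$ embeds in $Z(\gB)\subseteq\mathfrak U$.

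\emph{Part (4).} Localization at a central-free Ore set preserves domains and their failure in the relevant direction. Concretely, $\mathfrak U/\gp\mathfrak U\cong(\gB/\gp)_{\bar w}$ is an Ore localization of $\gB/\gp$. By \cite[2.1.16]{MR}, $\bar w$ is regular in $\gB/\gp$, so $\gB/\gp$ embeds into it. Hence if $\mathfrak U/\gp\mathfrak U$ is a domain, so is $\gB/\gp$. Conversely, the Ore localization of a domain is a domain. Thus complete primeness transfers, and Theorem~\ref{SpecB}(4) gives $\Spec_c(\mathfrak U)=\{\gm_{-,1},\gm_{+,1}\}$. Alternatively, one can note directly that $\mathfrak U/\gm_{\epsilon,n}\cong M_n(\mK)$.

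The main obstacle is the primitivity of $\langle C-\alpha\rangle$ for $\alpha\in\Z^{\rm odd}$. I would handle it by verifying the Jacobson property for $\mathfrak U$, either via the Nullstellensatz for countably generated Noetherian algebras or because $\mathfrak U$ is an iterated Ore extension of the Jacobson ring $T$.
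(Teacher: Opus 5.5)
Your proposal is correct and follows the paper's route. You transport Theorem~\ref{SpecB} through the extension/contraction bijection of Proposition~\ref{a5July26}. You then get primitivity of $\langle C-\alpha\rangle$ for $\alpha\in\Z^{\rm odd}$ from the Jacobson property together with the fact that $\gm_{\epsilon,n}$ is the only prime properly containing it. Your explicit check that $\ga_{\epsilon,n}\mathfrak U=\gm_{\epsilon,n}$ is a harmless refinement, as is transferring complete primeness through the localization instead of the paper's direct use of $(P+1)(P-1)=0$ and the density theorem.

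One caution: $\mK$ is not assumed uncountable. So the Jacobson property and the non-primitivity of $\langle 0\rangle$ should rest on the Nullstellensatz for $\mathfrak U$. This holds over any field because $\mathfrak U$ is an iterated Ore extension of $T$, which is a finite module over $\mK[g^{\pm2}]$, making $\mathfrak U$ a constructible algebra in the sense of \cite{MR}. Countable-dimension or Dixmier-type arguments do not suffice, since they need an uncountable base field.
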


\begin{proof}
	(1) By Proposition \ref{a5July26}, extension and contraction give an inclusion-preserving bijection
	$\Spec(\mathfrak B)\xrightarrow{\sim}\Spec(\mathfrak U)$.
	The description of $\Spec(\mathfrak B)$ given in Theorem \ref{SpecB} therefore yields the asserted description of $\Spec(\mathfrak U)$.
	
	(2) This follows from the inclusions of primes in statement (1). 
	
	(3) By \cite[Corollary 9.1.8]{MR}, $\mathfrak U$ is a Jacobson
	algebra satisfying the Nullstellensatz. Since $\mathfrak U$ has a
	nontrivial centre, the zero ideal is not primitive. Every maximal ideal is primitive, so it remains to consider the prime ideals
	$\langle C-\alpha\rangle$ with $\alpha\in\Z^{\rm odd}$. Write $\alpha=\epsilon n$ with
	$\epsilon\in\{\pm1\}$ and $n\in\mathbb N^{\rm odd}$. Since $\mathfrak U$
	is a Jacobson ring, every prime ideal is the
	intersection of the primitive ideals containing it. By part~(1), the only
	prime ideals containing $\langle C-\alpha\rangle$ are
	$\langle C-\alpha\rangle$ and $\mathfrak m_{\epsilon,n}$. Since
	$\mathfrak m_{\epsilon,n}$ is maximal, it is primitive. Hence
	$\langle C-\alpha\rangle$ is primitive as well. Therefore every nonzero
	prime ideal of $\mathfrak U$ is primitive.
	
	(4) Since $\mathfrak{U}$ is not a domain, the zero ideal is not completely
	prime.
	  Let $\alpha\in\mK$. Consider the factor algebra $\mathfrak{U}^{\alpha}:=\mathfrak{U}/\langle C- \alpha \rangle$. In $\mathfrak{U}^{\alpha}$ we have $(P+1)(P-1)=0$. Moreover, neither $P+1$ nor $P-1$ vanishes in $\mathfrak U^\alpha$.
	  Hence $\mathfrak U^\alpha$ has nonzero zero divisors and is therefore not a domain. Thus no ideal
	  $\langle C-\alpha\rangle$ is completely prime. Finally, since $S(\epsilon,n)$ is a finite-dimensional simple $\mathfrak U$-module over the algebraically closed field $\mK$, the density theorem gives
	  \begin{equation*}
	  \mathfrak U/\mathfrak m_{\epsilon,n}
	  \cong
	  \operatorname{End}_{\mK}\bigl(S(\epsilon,n)\bigr).
	  \end{equation*}
	  This matrix algebra is a domain if and only if
	  $\dim S(\epsilon,n)=1$.
	  As $\dim S(\epsilon,n)=n$, it follows that
	  $\mathfrak m_{\epsilon,1}$ is completely prime, whereas
	  $\mathfrak m_{\epsilon,n}$ is not completely prime for $n>1$.
	  Therefore
	  $\Spec_c(\mathfrak U)=\{\mathfrak m_{-,1},\mathfrak m_{+,1}\}.$
\end{proof}

\section{The classical quotient ring of $\mathfrak{U}$} \label{Quotient}  

In this section, we determine the classical quotient rings of $\mathfrak{U}$ and some of its subalgebras. The argument begins with localization at powers of $x$ and then reduces the computation to a suitable subalgebra whose quotient ring can be described explicitly.
\subsection{The localization $T_x$}
Recall that $T$ denotes the subalgebra of $\mathfrak{U}$ generated by $g^{\pm 1}$ and $x$, see \eqref{Tgx}. Since $gx=-xg$, the element $x$ is normal in $T$.
Let $T_x$ denote the localization of $T$ at the powers of $x$.

\begin{lemma}\label{lem:Tx-center}
	Let $u:=g^2$ and $v:=x^2$. Then
	\begin{equation}\label{ZDx}
	Z(T_x)\cong \Bbbk[u^{\pm1},v^{\pm1}] \big/\langle v-\lambda(1-u)\rangle
	\cong \Bbbk[u^{\pm1}]_{(1-u)}.
	\end{equation}
	where $\mK[u^{\pm 1}]_{(1-u)}$ denotes the localization of $\mK[u^{\pm 1}]$ at the powers of $1-u$. 
	Moreover,
	\begin{equation}\label{Dxz}
	T_x=\bigoplus_{i,j=0}^1 Z(T_x)g^ix^j.
	\end{equation}
	In particular, $T_x$ is a free $Z(T_x)$-module of rank $4$.
\end{lemma}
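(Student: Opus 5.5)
We work from the description \eqref{Tgx} of $T$ as a free $Z(T)=\Bbbk[g^{\pm2}]$-module with basis $\{g^ix^j\}_{i,j\in\{0,1\}}$. The first step is to check that $\{x^n\}$ is an Ore set and to describe $T_x$ concretely. The element $x$ is normal, since $gx=-xg$. Moreover $x^2=\lambda(1-u)$ is central in $T$. Inverting $x$ is therefore the same as inverting the central element $v=x^2$, and hence the same as inverting $1-u$, because $\lambda\neq0$. This gives
\[
T_x\cong T\otimes_{\Bbbk[u^{\pm1}]}\Bbbk[u^{\pm1}]_{(1-u)}.
\]
Central localization preserves the free basis. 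So $T_x=\bigoplus_{i,j}R\,g^ix^j$ with $R:=\Bbbk[u^{\pm1}]_{(1-u)}$. To know that $T\to T_x$ is injective, we need $1-u$ to be regular in $T$, which is immediate from the freeness of $T$ over $\Bbbk[u^{\pm1}]$.

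The second step identifies $R$ with the quotient ring in \eqref{ZDx}. Define $\Bbbk[u^{\pm1},v^{\pm1}]\to R$ by $u\mapsto u$ and $v\mapsto\lambda(1-u)$. This map is surjective and kills $v-\lambda(1-u)$. In the quotient ring $v$ is invertible, and $v=\lambda(1-u)$ there, so $1-u$ is invertible in the quotient. The quotient is therefore $\Bbbk[u^{\pm1}]$ with $1-u$ inverted, which is $R$. This proves that the two rings in \eqref{ZDx} are isomorphic and that they embed in $T_x$ as the subalgebra generated by $u^{\pm1}$ and $v^{\pm1}=x^{\pm2}$.

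The third step, which is the main point, shows that $Z(T_x)=R$. The inclusion $R\subseteq Z(T_x)$ holds because $u$ and $v$ are central. For the reverse inclusion, write $z=\sum r_{ij}g^ix^j$ with $r_{ij}\in R$. Conjugation by $g$ fixes $R$, fixes $g$, and negates $x$. So $gzg^{-1}=z$ forces $r_{i1}=0$. Conjugation by $x$ negates $g$, so $xzx^{-1}=z$ then forces $r_{10}=0$. Hence $z=r_{00}\in R$. Together with the first step, this yields \eqref{Dxz} and the freeness of rank $4$.

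The main obstacle is modest. One must justify that the localization at powers of $x$ coincides with the central localization at $1-u$, and that no torsion is killed. Both facts follow from the normality of $x$ and the regularity of $x$ in $T$, which in turn comes from the free basis.
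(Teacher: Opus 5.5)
Your proof is correct and follows the same route as the paper. You write elements of $T_x$ in the normal form $\sum r_{ij}g^ix^j$ over the central subring generated by $u^{\pm1}$ and $v^{\pm1}$, use commutation with $g$ to kill the $x$-components, and then use commutation with $x$ to kill the $g$-component. Your extra step of realizing $T_x$ as the central localization $T\otimes_{\Bbbk[u^{\pm1}]}\Bbbk[u^{\pm1}]_{(1-u)}$ of the free $\Bbbk[u^{\pm1}]$-module $T$ is a useful addition: it justifies the uniqueness of this normal form, which the paper only asserts.
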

\begin{proof}
	The elements $u$ and $v$ are central in $T_x$. Moreover, the defining relation of $\mathfrak U$ gives
	\begin{equation*}
	v=x^2=\lambda(1-g^2)=\lambda(1-u).
	\end{equation*} 
	Every element of $T_x$ can be written uniquely in the form \begin{equation*} a=z_0+z_1g+z_2x+z_3gx, \qquad z_0,z_1,z_2,z_3\in \Bbbk[u^{\pm1},v^{\pm1}] \big/ \langle v-\lambda(1-u)\rangle. \end{equation*} Indeed, the relations $g^2=u$, $x^2=v$, and $gx=-xg$ reduce every Laurent monomial in $g$ and $x$ to one of the four forms $1$, $g$, $x$, and $gx$.	
	Now suppose that $a$ is central. Comparing $ga$ and $ag$, and using
	$gx=-xg$, gives $z_2=z_3=0.$
	Thus $a=z_0+z_1g$. Comparing $xa$ and $ax$ then gives $z_1=0.$
	Therefore
	\begin{equation*}
	Z(T_x)
	=
	\Bbbk[u^{\pm1},v^{\pm1}]
	\big/
	\langle v-\lambda(1-u)\rangle.
	\end{equation*}
	
	Since $v=\lambda(1-u)$ and $v$ is invertible in $T_x$,  we obtain the second
	isomorphism in \eqref{ZDx}. The decomposition \eqref{Dxz} is the preceding
	normal form rewritten over the centre.
\end{proof}

A set of $n\times n$ \emph{matrix units} in a ring $S$ is a collection
$\{e_{ij}\mid 1\leq i,j\leq n\}\subseteq S$ satisfying
\begin{equation*}
\sum_{i=1}^n e_{ii}=1,
\qquad
e_{ij}e_{k\ell}=\delta_{jk}e_{i\ell}
\end{equation*}
for all $i,j,k,\ell$. The existence of such a set of matrix units
realizes $S$ as a full matrix ring over a suitable subring. More precisely,
set
\begin{equation*}
R:=
\left\{
\sum_{k=1}^n e_{k1}ae_{1k}
\;\middle|\;
a\in S
\right\}.
\end{equation*}
Then $R$ is a subring of $S$, and
$S\cong M_n(R)$. Explicitly, the map
\begin{equation} \label{Matrixiso}
\varphi:S\longrightarrow M_n(R),
\qquad
a\longmapsto (r_{ij}),
\quad
r_{ij}:=\sum_{k=1}^n e_{ki}ae_{jk},
\end{equation}
is a ring isomorphism; see \cite[Proposition 1.1.3]{Rowen}.

The subring $R$ can also be identified with the centralizer of the matrix
units. Define
\begin{equation*}
C_S(\{e_{ij}\})
:=
\{c\in S\mid ce_{ij}=e_{ij}c
\text{ for all }i,j\}.
\end{equation*}
We first show that $R\subseteq C_S(\{e_{ij}\})$. If
$r=\sum_{k=1}^n e_{k1}ae_{1k}\in R$, then, for all $i,j$,
\begin{equation*}
e_{ij}r
=
\sum_{k=1}^n e_{ij}e_{k1}ae_{1k}
=
\sum_{k=1}^n \delta_{jk}e_{i1}ae_{1k}
=
e_{i1}ae_{1j},
\end{equation*}
while
\begin{equation*}
re_{ij}
=
\sum_{k=1}^n e_{k1}ae_{1k}e_{ij}
=
\sum_{k=1}^n \delta_{ki}e_{k1}ae_{1j}
=
e_{i1}ae_{1j}.
\end{equation*}
Hence $e_{ij}r=re_{ij}$ for all $i,j$, and therefore
$r\in C_S(\{e_{ij}\})$.

Conversely, let $c\in C_S(\{e_{ij}\})$. Since
$\sum_{k=1}^n e_{k1}e_{1k}=1$, we have
\begin{equation*}
c
=
\left(\sum_{k=1}^n e_{k1}e_{1k}\right)c
=
\sum_{k=1}^n e_{k1}(e_{1k}c)
=
\sum_{k=1}^n e_{k1}(ce_{1k})
=
\sum_{k=1}^n e_{k1}ce_{1k}.
\end{equation*}
Thus $c\in R$, and consequently
$R=C_S(\{e_{ij}\}).$

The following lemma shows that $T_x$ is a matrix algebra over its centre.
\begin{lemma} \label{a24Jun26}  
	Let $\CA:=Z(T_x)\cong \mK[u^{\pm 1}]_{(1-u)}$, where $u=g^2$. Then
	 $T_x \cong M_2(\CA)$. 
\end{lemma}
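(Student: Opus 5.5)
The plan is to exhibit an explicit set of $2\times 2$ matrix units in $T_x$ and then invoke the matrix-unit isomorphism \eqref{Matrixiso}. Its centralizer will be identified with $\CA$ using Lemma~\ref{lem:Tx-center}. Since $g$ and $x$ anticommute, $g^2=u$ is central and invertible, and $x^2=v=\lambda(1-u)$ is central and invertible in $T_x$, $T_x$ is a quaternion-type algebra over $\CA$ with both "squares" invertible. The natural idempotents come from an element squaring to $1$.

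Concretely, note that $g^2=u$ is a unit of $\CA$ but need not be a square, so instead of $u^{-1/2}g$ I use $x$ together with $g$ differently. Set
\[
e_{11}:=\tfrac12(1+P'),\qquad e_{22}:=\tfrac12(1-P'),
\]
where $P'$ is an element with $P'^2=1$. A convenient choice is already available: $P=g+\frac{1}{\sqrt\lambda}x\in T\subseteq T_x$ satisfies $P^2=1$ by Lemma~\ref{DefB}. For the off-diagonal units I need an element anticommuting with $P$ and invertible in $T_x$. Since $PX=-XP$ with $X=xg^{-1}$, and $X^2=\lambda-\lambda g^{-2}=\lambda u^{-1}(u-1)$ is a unit of $\CA$ (because $1-u$ is inverted), $X$ is invertible in $T_x$. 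I will then define
\[
e_{12}:=e_{11}X,\qquad e_{21}:=X^{-1}e_{11}\,\text{ suitably},
\]
more precisely $e_{12}=e_{11}Xe_{22}$ and $e_{21}=e_{22}X^{-1}e_{11}$. Using $PX=-XP$ one gets $Xe_{22}=e_{11}X$, so $e_{12}=e_{11}X$ and $e_{21}=X^{-1}e_{11}=e_{22}X^{-1}$. Then $e_{12}e_{21}=e_{11}XX^{-1}e_{11}=e_{11}$ and $e_{21}e_{12}=X^{-1}e_{11}X=e_{22}$. The remaining matrix-unit relations follow from orthogonality of $e_{11}$ and $e_{22}$.

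Next I identify the centralizer $R=C_{T_x}(\{e_{ij}\})$. An element commutes with all $e_{ij}$ if and only if it commutes with $P$ and $X$, since these generate the $e_{ij}$ and conversely $P=e_{11}-e_{22}$ and $X=e_{12}+e_{21}X^2$ with $X^2$ central. Because $P$ and $X$ together with $\CA$ generate $T_x$ (we have $g=(1+\lambda^{-1/2}X)^{-1}P$, and $1+\lambda^{-1/2}X$ is invertible with inverse $(1-\lambda^{-1/2}X)(1-\lambda^{-1}X^2)^{-1}=(1-\lambda^{-1/2}X)u$, and $x=Xg$), the centralizer is $Z(T_x)=\CA$. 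Hence $T_x\cong M_2(\CA)$ by \eqref{Matrixiso}. As a consistency check, rank $4$ over $\CA$ matches \eqref{Dxz}.

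The main obstacle is verifying the generation claim cleanly, i.e.\ that $\CA$, $P$ and $X$ generate $T_x$, since it requires the inverse of $1+\lambda^{-1/2}X$. The computation above gives $1-\lambda^{-1}X^2=g^{-2}=u^{-1}$. This is a unit, so the step should go through. If needed, I would alternatively check directly that $\{e_{ij}\}$ spans $T_x$ over $\CA$ using \eqref{Dxz}.
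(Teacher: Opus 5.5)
Your proof is correct, and its overall shape matches the paper's: the same matrix units built from $P=g+\frac{1}{\sqrt{\l}}x$ and the invertible element $X=xg^{-1}$, followed by \eqref{Matrixiso}. Relative to the paper, you swap the idempotents $\frac12(1\pm P)$ and rescale the off-diagonal units by the central unit $X^{2}$, which is immaterial.

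The difference lies in how you identify the coefficient ring $R$ with $\CA$.

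The paper evaluates $a\mapsto e_{11}ae_{11}+e_{21}ae_{12}$ on the $\CA$-basis $1,g,x,gx$ of \eqref{Dxz} and checks that the result lies in $\CA$.

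You instead use $R=C_{T_x}(\{e_{ij}\})$. Commuting with all $e_{ij}$ is the same as commuting with $P$ and $X$, because $X^{2}$ is central. The set $\CA\cup\{P,X\}$ generates $T_x$: from $1-\l^{-1}X^2=u^{-1}$ you get $g=(1-\l^{-1/2}X)uP$, and then $x=Xg$. Hence the centralizer is $Z(T_x)=\CA$.

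Your route is more conceptual and avoids the explicit projection computation. Incidentally, in the paper's formula the coefficient of $z_2x^2$ should be $-1/\sqrt{\l}$ rather than $-\sqrt{\l}$; this slip does not affect the conclusion. The paper's route has the advantage of giving an explicit formula for the $(1,1)$-entry map, and it uses the normal form \eqref{Dxz} directly instead of a generation argument.
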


\begin{proof}
	Set $e_1:=\frac{1}{2}(1-P)$ and $e_2:=\frac{1}{2}(1+P)$. 
	Since $P^2=1$, the elements $e_1$ and $e_2$ are orthogonal
	idempotents satisfying $e_1+e_2=1$. Recall that $X=xg^{-1}$. Since both $x$ and $g$ are invertible in $T_x$, the element
	$X$ is invertible as well. Moreover, the relation $PX=-XP$ gives
	\begin{equation*}
	e_1X=Xe_2, \qquad e_2X=Xe_1.
	\end{equation*}
	Define
	\begin{equation} \label{matu} 
	e_{11}:=e_1, \quad e_{12}:= e_1 X, \quad e_{21}:=e_2X^{-1}, \quad e_{22}:=e_2
	\end{equation}
	One checks directly that $\{e_{ij}\mid 1\leq i,j\leq 2 \}$ is a set of matrix units in $T_x$. Hence $T_x \cong M_2(R)$ for a suitable subring $R$. 
	
	By the construction of the matrix units, we may take 
	\begin{equation*}
	R=\{ e_{11}ae_{11}+e_{21}ae_{12} \mid a\in T_x \}.
	\end{equation*}
	 Since every element of $\CA=Z(T_x)$ commutes with the matrix units, we have $\CA \subseteq R$. Conversely, by \eqref{Dxz}, every element of $T_x$ has a unique
	expression $a=z_0+z_1 g+z_2 x+z_3 gx$ where $z_i \in \CA$. A direct calculation yields
	\begin{equation*}
	e_{11}ae_{11}+e_{21}ae_{12}=z_0- z_1 g^2-\sqrt{\l} \,z_2 x^2. 
	\end{equation*}
	Since $g^2, x^2\in \CA$, the right-hand side belongs to $\CA$. Thus $R\subseteq \CA$, so
	 $R=\CA$. Consequently, $T_x\cong M_2(\CA)$.
\end{proof}

Let $\CC_R$ denote the set of regular elements of a ring $R$. Then $\CC_R$ is a multiplicative set. If $\CC_R$ is a left (resp. right) Ore set, the localization $Q_{\ell}(R):=\CC_R^{-1}R$ (resp. $Q_{r}(R):=R \CC_R^{-1}$) is called the left (resp. right) quotient ring of $R$. If $\CC_R$ is an Ore set (that is, both a left and a right Ore set), then $Q_{\ell}(R)$ and $Q_r(R)$ are isomorphic, and the ring $Q(R):=\CC_R^{-1}R=R \CC_R^{-1}$ is called the \emph{classical quotient ring} of $R$. 

\begin{corollary}
The classical quotient ring of $T$ is 
\begin{equation*}
Q(T) \cong M_2(\mK(u)).
\end{equation*}
\end{corollary}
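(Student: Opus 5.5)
The plan is to pass from $T$ to $T_x$ and then invert the nonzero elements of the centre. Lemma~\ref{a24Jun26} already gives $T_x\cong M_2(\CA)$ with $\CA=\mK[u^{\pm1}]_{(1-u)}$.

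First, I will check that $T$ is a prime Noetherian ring and that $x$ is regular in $T$. Since $T$ is a free module of finite rank over the Noetherian centre $Z(T)=\mK[g^{\pm2}]$, it is Noetherian and a PI ring. For regularity, note that $x^2=\lambda(1-g^2)$ is a nonzero central element of the domain $Z(T)$. Because $T$ is free over $Z(T)$, multiplication by $x^2$ is injective on $T$, and hence $x$ is regular.

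Next, since $x$ is normal and regular, the set $\{x^n\}$ is an Ore set. I then set $\mathcal Z:=Z(T)\setminus\{0\}$, which consists of central elements. Each $z\in\mathcal Z$ is regular in $T$, again because $T$ is free over $Z(T)$. Localizing at $\mathcal Z$ gives
\[
\mathcal Z^{-1}T=\bigoplus_{i,j=0}^1 \mK(u)\,g^ix^j .
\]
In this ring $x$ is already invertible, since $x^2\in\mK(u)^{*}$. Therefore $\mathcal Z^{-1}T$ coincides with the central localization of $T_x$ at $\CA\setminus\{0\}$. This gives
\[
\mathcal Z^{-1}T\cong M_2(\operatorname{Frac}\CA)=M_2(\mK(u)),
\]
where the isomorphism comes from Lemma~\ref{a24Jun26} and the fact that central localization commutes with forming matrix rings. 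The matrix units \eqref{matu} persist after localization.

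Finally, I identify $\mathcal Z^{-1}T$ with $Q(T)$. The ring $M_2(\mK(u))$ is simple Artinian. Every regular element $c$ of $T$ stays regular in this ring, because the ring is a free $\mK(u)$-module and $T$ embeds in it. In an Artinian ring a regular element is a unit. Moreover, every element of $M_2(\mK(u))$ has the form $z^{-1}t=tz^{-1}$ with $t\in T$ and $z\in\mathcal Z$. A standard lemma (e.g.\ \cite[Proposition 3.1.16]{MR} or its analogue) then says that a ring $Q\supseteq T$ with these two properties is the classical quotient ring. Hence $Q(T)\cong M_2(\mK(u))$.

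The main obstacle is this last identification. One must verify that every regular element of $T$ becomes invertible in $\mathcal Z^{-1}T$. The Artinian argument handles this, but it needs care: a regular element of $T$ must remain regular after central localization. This holds because if $ca=0$ with $a=z^{-1}t$, then $ct=0$, which forces $t=0$.
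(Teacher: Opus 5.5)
Your proof is correct, but it reaches $Q(T)$ by a different route than the paper.

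The paper's route is:
\begin{itemize}
\item localize at the powers of the regular normal element $x$ and use $Q(T)=Q(T_x)$;
\item apply Lemma~\ref{a24Jun26} to get $T_x\cong M_2(\CA)$, then quote $Q(M_2(\CA))\cong M_2(Q(\CA))$ from \cite[Corollary 3.1.5]{MR};
\item take the existence of $Q(T)$ from Goldie's theorem, since $T$ is Noetherian.
\end{itemize}

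Your route is:
\begin{itemize}
\item invert all of $Z(T)\setminus\{0\}$ and identify the central localization $\mathcal Z^{-1}T$ with $M_2(\mK(u))$;
\item check that regular elements of $T$ stay regular there, via $cz^{-1}t=z^{-1}ct$, and hence become units in the Artinian ring $M_2(\mK(u))$;
\item observe that every element of $\mathcal Z^{-1}T$ has the form $z^{-1}t=tz^{-1}$ with $z$ central and regular.
\end{itemize}

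These last two properties are exactly the definition of the left and right classical ring of quotients with respect to $\CC_T$. So you need no external lemma at the end. Your argument also proves that $\CC_T$ is an Ore set without using Noetherianity or Goldie's theorem. It avoids both the step $Q(T)=Q(T_x)$ and the matrix-compatibility result. It works unchanged for any algebra that is free of finite rank over a central subdomain.

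You use $T_x$ only to import the matrix structure. Since $x$ is already a unit in $\mathcal Z^{-1}T$ (because $x^2=\lambda(1-u)\in\mK(u)^{*}$), you could write the matrix units \eqref{matu} down there directly and skip Lemma~\ref{a24Jun26} altogether. The paper's version is shorter given its citations; yours is self-contained and more general.

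Two small points:
\begin{itemize}
\item You announce that you will check $T$ is prime, but you never do, and you do not need it.
\item The reason you first give for regular elements staying regular (that the ring is a free $\mK(u)$-module) is not the operative one. The centrality computation in your final paragraph is what actually proves it.
\end{itemize}
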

\begin{proof}
	Since $T$ is Noetherian, the set of regular elements satisfies both the left and right Ore conditions. Therefore the classical quotient ring $Q(T)$ exists. The element $x$ is regular in $T$,  so $Q(T)=Q(T_x)$. By Lemma~\ref{a24Jun26}, $T_x \cong M_2(\CA)$. Therefore \cite[Corollary 3.1.5]{MR} gives $Q(T_x) \cong M_2(Q(\CA))$. Since $\CA$ is a commutative domain, $Q(\CA)$ is its field of fractions, namely $\mK(u)$. Hence $Q(T)=Q(T_x) \cong M_2(\mK(u))$. 
\end{proof}

\subsection{The Hopf subalgebra $\gA$}
Recall that $\gA=T[s;\d_1]$ is the subalgebra of $\mathfrak{U}$ generated by $g^{\pm 1}$, $x$, and $s$. Since $xg=-gx$ and $xs=(s+2\l g^2)x$, the element $x$ is normal in $\gA$.  Let $\gA_x$ denote the localization of $\gA$ at the powers of $x$. Then 
\begin{equation*}
\gA_x=T_x[s;\d_1], 
\end{equation*}
where $\d_1$ extends to a derivation of $T_x$ determined by
$\d_1(g)=2x^2 g$ and $\d_1(x)=-2\l g^2 x$. 

The matrix units 
\eqref{matu} lie in $T_x \subseteq \gA_x$. Set
\begin{equation*}
h:=H+e_2. 
\end{equation*}
Using the expression for $H$ in \eqref{HH}, we obtain
\begin{equation} \label{Defh}  
h=-\frac{1}{2\l}\left(s+\sqrt{\l} xg \right)+\frac{1}{2}\left(1+g+\frac{1}{\sqrt{\l}}x \right). 
\end{equation}
Since $g^2$ commutes with $x$, we have $[h, g^2]=-\frac{1}{2\l}[s, g^2]$. From \eqref{xsg}  it follows that $[s,g^2]=4x^2g^2=4\l(1-g^2)g^2$, and therefore
\begin{equation} 
[h, g^2]=-2(1-g^2)g^2.  \label{hg2} 
\end{equation}

\begin{lemma} \label{hmat} 
	The element $h$ commutes with the matrix units $e_{ij}$, $1\leq i,j\leq 2$. 
\end{lemma}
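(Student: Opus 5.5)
The plan is to work entirely in the $\mathfrak B$-coordinates, where $h=H+e_2$ with $e_2=\frac12(1+P)$, and the matrix units \eqref{matu} are
\[
e_{11}=e_1,\quad e_{12}=e_1X,\quad e_{21}=e_2X^{-1},\quad e_{22}=e_2 ,
\]
with $X$ invertible in $T_x\subseteq\gA_x$. The algebra generated by the $e_{ij}$ is also generated by $P$, $X$ and $X^{-1}$. Indeed, the $e_{ij}$ are built from these three elements, and conversely $P=e_{22}-e_{11}$. Since $(e_{12}+e_{21})(e_{12}+e_{21})=e_{11}+e_{22}=1$, we also get
\[
X=e_{12}+e_{21}X^2 .
\]
So it suffices to show that $h$ commutes with $e_{11}$, $e_{22}$, $e_{12}$ and $e_{21}$. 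It is equivalent, and more convenient, to check that $[h,P]=0$, $[h,e_1X]=0$ and $[h,e_2X^{-1}]=0$.

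\emph{Diagonal units.} By Lemma~\ref{DefB}, $PH=HP$, and $e_2$ is a polynomial in $P$. Hence $h$ commutes with $P$, and therefore with $e_1$ and $e_2$.

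\emph{The unit $e_{12}=e_1X$.} Since $h$ commutes with $e_1$, we get
\[
[h,e_1X]=e_1[h,X]=e_1\bigl([H,X]+[e_2,X]\bigr).
\]
Here $[H,X]=X$ by \eqref{XYP}. From $e_2X=Xe_1$ we get $[e_2,X]=X(e_1-e_2)=-XP$. Hence
\[
[h,e_1X]=e_1X(1-P)=2e_1Xe_1=2e_1e_2X=0 .
\]

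\emph{The unit $e_{21}=e_2X^{-1}$.} In $\gA_x$, the relation $[H,X]=X$ gives $[H,X^{-1}]=-X^{-1}(HX-XH)X^{-1}=-X^{-1}$. From $e_2X^{-1}=X^{-1}e_1$ we get $[e_2,X^{-1}]=X^{-1}(e_1-e_2)=-X^{-1}P$. Therefore
\[
[h,X^{-1}]=-X^{-1}(1+P)=-2X^{-1}e_2 ,
\]
and
\[
[h,e_2X^{-1}]=-2e_2X^{-1}e_2=-2X^{-1}e_1e_2=0 .
\]

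The only delicate point is justifying these identities inside $\gA_x$ rather than in $\mathfrak B$. The element $H$ lies in $\gA$ by \eqref{HH}, and $P$, $X$, $X^{-1}$ lie in $T_x$. The relations used, namely $[H,X]=X$, $PH=HP$ and $PX=-XP$, hold in $\mathfrak U$ by Lemma~\ref{DefB}. They therefore hold in the common localization $\mathfrak U_x\supseteq\gA_x$, since $x$ is regular and normal. With that checked, the computation above is complete and no case analysis is needed.
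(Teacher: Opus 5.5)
Your proof is correct and is essentially the paper's argument. You first note that $h$ commutes with $e_1,e_2$ because $PH=HP$. You then compute $[h,X]=X(1-P)=2Xe_1$ and $[h,X^{-1}]=-2X^{-1}e_2$, which give $[h,e_1X]=2e_1e_2X=0$ and $[h,e_2X^{-1}]=-2X^{-1}e_1e_2=0$.

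One caution: your opening claim that the $e_{ij}$ generate the same algebra as $P$, $X$, $X^{-1}$ is false. The identity $X=e_{12}+e_{21}X^2$ uses $X^2$, which is not in the span of the $e_{ij}$. Indeed, $h$ centralizes the $e_{ij}$ while $[h,X]=X(1-P)\neq 0$. None of your computations depend on that claim, so the proof is unaffected.
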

\begin{proof}
	Recall that $e_{11}=e_1$, $e_{12}=e_1X$, $e_{21}=e_2X^{-1}$ and $e_{22}=e_2$. 
	Since $e_1e_2=e_2e_1=0$, we have $[h, e_1]=[H+e_2, e_1]=[H,e_1]$. Since $H$ commutes with $P$, it commutes with $e_1=\frac{1}{2}(1-P)$ and $e_2=\frac{1}{2}(1+P)$. Hence
	\begin{equation*}
	[h, e_1]=0, \qquad [h, e_2]=0.
	\end{equation*}	
	Using $[H,X]=X$ and $e_2X=Xe_1$, we compute
	\begin{equation*}
	[h, X]=[H+e_2, X]=X+e_2X-Xe_2=2Xe_1. 
	\end{equation*}
	Therefore,
	\begin{equation*}
	[h, e_{12}]=[h, e_1X]=e_1[h, X]=2e_1Xe_1=2Xe_2e_1=0. 
	\end{equation*}
	Similarly, since $[h,X^{-1}]
	=-X^{-1}[h,X]X^{-1}
	=-2e_1X^{-1}$, we get
	\begin{equation*}
	[h, e_{21}]=[h, e_2X^{-1}]=e_2[h, X^{-1}]=-2e_2e_1X^{-1}=0. 
	\end{equation*}
	Thus $h$ commutes with all the matrix units.
\end{proof}

Recall from Lemma \ref{lem:Tx-center} that $\CA=Z(T_x) \cong \mK[u^{\pm 1}]_{(1-u)}$ where $u=g^2$. The next proposition identifies $\gA_x$ as a matrix algebra over a skew polynomial ring. 
\begin{proposition} \label{b24Jun26}  
	\begin{enumerate}
		\item 	Let $\CB$ be the subalgebra of $\gA_x$ generated by $\CA$ and $h$. Then
		\begin{equation*}
		\gA_x \cong M_2(\CB), \quad \CB \cong \CA[h;\d] \cong \mK[u^{\pm 1}]_{(1-u)}[h;\d],
		\end{equation*}
		where $\d$ is the derivation of $\CA$ determined by $\d(u)=-2(1-u)u$. 
		
		\item The centres of $\gA_x$ and $\gA$ are trivial: $Z(\gA_x)=Z(\gA)=\mK$. 
		
		\item The classical quotient ring of $\gA$ is 
		\begin{equation*}
		Q(\gA) \cong M_2(\mathbf{F})
		\end{equation*}
		where $\mathbf{F}:=\Frac(A_1)$ is the Weyl skew field. 
	\end{enumerate}

\end{proposition}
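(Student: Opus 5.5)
For part (1), I will use the matrix-unit machinery recalled before Lemma~\ref{a24Jun26}, now applied in $S=\gA_x$. The units \eqref{matu} lie in $T_x\subseteq\gA_x$, so $\gA_x\cong M_2(R)$ with $R=C_{\gA_x}(\{e_{ij}\})$.

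First, $\CA\subseteq R$, because $\CA=Z(T_x)$ commutes with the units. Second, $h\in R$ by Lemma~\ref{hmat}. Hence $\CB\subseteq R$.

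For the reverse inclusion, write $a\in\gA_x=T_x[s;\d_1]$ as $a=\sum_k t_k h^k$ with $t_k\in T_x$. This is possible since $s=-2\lambda h+(\text{element of }T_x)$ by \eqref{Defh}, so $\gA_x=\bigoplus_k T_x h^k$ as a left $T_x$-module. Since $h$ commutes with the units, the projection
\[
\pi(a)=e_{11}ae_{11}+e_{21}ae_{12}
\]
satisfies $\pi(a)=\sum_k \pi(t_k)h^k$. By the computation in Lemma~\ref{a24Jun26}, $\pi(t_k)\in\CA$, so $R\subseteq\CB$ and therefore $R=\CB$.

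The relation $[h,u]=-2(1-u)u$ is \eqref{hg2}. Moreover $[h,\CA]\subseteq\CA$, since $\CA$ is generated by $u^{\pm1}$ and $(1-u)^{-1}$ and $\operatorname{ad}_h$ is a derivation. Thus $\CB$ is a quotient of $\CA[h;\d]$. For injectivity, I will use that the powers $h^k$ are left $T_x$-independent, coming from the $s$-degree in the Ore extension. Since $\CA\subseteq T_x$, they are also $\CA$-independent. The main care point is verifying this freeness and the decomposition $\gA_x=\bigoplus T_xh^k$. It follows because the leading $s$-coefficient of $h^k$ is $(-2\lambda)^{-k}\cdot(-1)^k$ times a unit.

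For part (2), use $Z(M_2(\CB))\cong Z(\CB)$. Let $z=\sum_k a_kh^k\in Z(\CB)$ with top degree $n\ge1$ and $a_n\neq 0$. The coefficient of $h^{n-1}$ in $[z,u]$ is $n a_n\d(u)\neq0$, since $\CA$ is a domain, char $\Bbbk=0$ and $\d(u)\ne0$. This is a contradiction, so $z=a_0\in\CA$. Then $[h,a_0]=\d(a_0)=0$.

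Now $\d$ is the derivation $-2u(1-u)\,d/du$ on the rational functions $\Bbbk[u^{\pm1}]_{(1-u)}$, and in characteristic $0$ its kernel is $\Bbbk$. Hence $Z(\gA_x)=\Bbbk$. Since $\gA\subseteq\gA_x$ and $x$ is normal and regular, a central element of $\gA$ commutes with $x^{-1}$ and so lies in $Z(\gA_x)$. Therefore $Z(\gA)=\Bbbk$.

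For part (3), $\gA$ is Noetherian and $x$ is regular, so $Q(\gA)=Q(\gA_x)\cong M_2(Q(\CB))$ by \cite[Corollary 3.1.5]{MR}. It remains to identify $Q(\CB)$ with $\Frac(A_1)$.

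Put $q:=\frac12\log$-type coordinate; algebraically, look for $t\in\Frac(\CA)$ with $\d(t)=1$. Take $t=-\tfrac12\,u/(1-u)$, or adjust as needed. Compute $\d(u/(1-u))=\d(u)/(1-u)^2=-2u/(1-u)$, which is not constant. Instead set $p:=h\cdot\frac{1-u}{u}$, wait — better to rescale $h$. Let $f=-2u(1-u)$ and set $\partial:=f^{-1}h$, so that $[\partial,u]=1$.

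Then $\Frac(\CB)=\Frac(\Bbbk(u)[h;\d])=\Frac(\Bbbk(u)[\partial;d/du])$. Since $\Bbbk(u)[\partial]$ is a localization of $A_1=\Bbbk\langle u,\partial\rangle$ at $\Bbbk[u]\setminus0$, this equals $\Frac(A_1)=\mathbf F$.

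The expected main obstacle is part (1)'s freeness and triangularity argument; the rest is formal.
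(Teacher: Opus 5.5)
Your proposal is correct and follows essentially the paper's route: the matrix units from $T_x$, the free left $T_x$-basis $\{h^k\}$ of $\gA_x$ to identify the centralizer with $\CA[h;\d]$, and the rescaling $\partial=f^{-1}h$ with $[\partial,u]=1$ to reach $\Frac(A_1)$. The one real deviation is in (2): your direct leading-coefficient argument ($n a_n\d(u)\neq0$, then $\ker\d=\mK$) replaces the paper's identification $\CB\cong A_1[u^{-1},(1-u)^{-1}]$; it is more self-contained, but it does not give the simplicity of $\CB$ that the paper uses later for $\Spec(\gA)$. In the write-up, drop the abandoned attempts in (3), since neither $t=-\tfrac12u/(1-u)$ nor $h(1-u)/u$ works. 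Also, the leading $s$-coefficient of $h^k$ is simply $(-2\lambda)^{-k}$, without the extra factor $(-1)^k$.
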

\begin{proof}
	(1) Recall that $\gA_x=T_x[s;\d_1].$ From \eqref{Defh}, we have
	\[
	s=-2\lambda h+
	\lambda\left(1+g+\frac1{\sqrt{\lambda}}x\right)
	-\sqrt{\lambda}xg .
	\]
	The second and third terms on the right-hand side belong to $T_x$.
	Hence $s$ and $h$ generate the same algebra over $T_x$, and therefore
	$\gA_x=T_x[h;\d]$, where $\d=\operatorname{ad}_h|_{T_x}$.
	
	The matrix units in \eqref{matu} lie in $\gA_x$, and by Lemma \ref{hmat}, $h$ commutes with each of them. We first determine the centralizer of these matrix units. 
	Let 
	\begin{equation*}
	a=\sum_{k=0}^n d_k h^k \in \gA_x, \quad d_k\in T_x. 
	\end{equation*}
	Suppose that $a$ commutes with all $e_{ij}$. Then
	\[0=[a,e_{ij}]=\sum_{k=0}^n[d_k,e_{ij}]h^k.\]
	By the uniqueness of the Ore extension expansion in $T_x[h;\d]$, we obtain
	$[d_k,e_{ij}]=0$
	for every $k$ and every $i,j$. Hence
	\[
	C_{\gA_x}(\{e_{ij}\})
	=
	\bigoplus_{k\geq0}
	C_{T_x}(\{e_{ij}\})h^k.
	\]	
	By Lemma~\ref{a24Jun26}, the centralizer of the matrix units in
	$T_x$ is precisely its centre $\CA$.
	Therefore 
	\[C_{\gA_x}(\{e_{ij}\})=\bigoplus_{k\geq 0}\CA h^k.\]
	 By \eqref{hg2}, we have $[h,u]=-2(1-u)u$ where $u=g^2$. 
	Thus the derivation $\delta$ restricts to $\CA$ and is determined by $ \delta(u)=-2(1-u)u$.  
	Consequently,
	$C_{\gA_x}(\{e_{ij}\})=\CA[h;\delta]$.
	Since $\CB$ is generated by $\CA$ and $h$, we conclude that
	$\CB=C_{\gA_x}(\{e_{ij}\})=\CA[h;\delta]$.
	The existence of the matrix units and the above identification of their centralizer then yield
	$\gA_x\cong M_2(\CB)$.
	Finally, \eqref{ZDx} gives
	$\CB\cong\mathbb K[u^{\pm1}]_{(1-u)}[h;\delta],$
	as required.
	
	(2) Set $h':=-\frac{1}{2}hu^{-1}(1-u)^{-1}\in \CB$. From $[h, u]=-2(1-u)u$, we obtain $[h',u]=1$. Thus the subalgebra of $\CB$ generated by $h'$ and $u$ is isomorphic to the first Weyl algebra $A_1$. Moreover, $\CB$ is obtained from this copy of $A_1$ by adjoining the
	inverses of $u$ and $1-u$. Thus $\CB$ is isomorphic to the localization of $A_1$ at the Ore set generated by $u$ and $1-u$; that is, $\CB\cong A_1[u^{-1},(1-u)^{-1}]$. It follows that $\CB$ is a simple algebra with centre $\mK$. By statement (1), $\gA_x \cong M_2(\CB)$, so $\gA_x$ is a simple algebra with centre $\mK$. Consequently, $Z(\gA) =\gA \cap Z(\gA_x)=\mK$. 
	
	(3) The element $x$ is regular in $\gA$, so $Q(\gA)=Q(\gA_x)$. Since $\gA_x \cong M_2(\CB)$,  
	\cite[Corollary 3.1.5]{MR} gives $Q(\gA_x) \cong M_2(Q(\CB))$. Since $\CB$ is a localization of the Noetherian domain $A_1$, it has the same classical quotient ring as $A_1$. Therefore $Q(\CB)=\Frac (A_1)=\mathbf{F}$. Consequently, $Q(\gA)=Q(\gA_x) \cong M_2(\mathbf{F})$. 
\end{proof}

\begin{corollary}
	\begin{enumerate}
		\item The prime spectrum of $\gA$ is 
		\begin{equation*}
		\Spec(\gA)=\{ \langle 0 \rangle \} \, \cup \, \{ \langle x, g\pm 1 \rangle \}\,\cup \, \{ \langle x, g\pm 1, s-\gamma \rangle \mid \gamma \in \mK \}.
		\end{equation*}
		The inclusions of prime ideals are shown in the following diagram
		\[
		\begin{tikzpicture}[scale=1]
		
		\node (pminus) at (-2.5,2) 
		{$\{\langle x,g-1,s-\gamma\rangle\mid \gamma\in\mK\}$};
		
		\node (pplus) at (2.5,2) 
		{$\{\langle x,g+1,s-\gamma\rangle\mid \gamma\in\mK\}$};
		
		\node (minus) at (-2.5,1)
		{$\langle x,g-1\rangle$};
		
		\node (plus) at (2.5,1)
		{$\langle x,g+1\rangle$};
		
		\node (zero) at (0,0)
		{$\langle0\rangle$};
		
		\draw (minus) -- (pminus);
		\draw (plus) -- (pplus);
		\draw (zero) -- (minus);
		\draw (zero) -- (plus);
		
		\end{tikzpicture}
		\]
		
		\item  The maximal spectrum of $\gA$ is 
		\begin{equation*}
		\Max(\gA)= \{ \langle x, g\pm 1, s-\gamma \rangle \mid \gamma \in \mK \}.
		\end{equation*}
		\item The primitive spectrum of $\gA$ is 
		\begin{equation*}
		\Prim(\gA)=\{ \langle 0 \rangle \} \,\cup \, \{ \langle x, g\pm 1, s-\gamma \rangle \mid \gamma \in \mK \}.
		\end{equation*}
		
		\item The nonzero prime ideals of $\gA$ are completely prime. Hence $$\Spec_c(\gA)=\Spec(\gA) \setminus \{ \langle 0 \rangle \}. $$
	\end{enumerate}
\end{corollary}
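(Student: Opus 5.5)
Since $x$ is normal in $\gA$, the primes of $\gA$ split into those containing $x$ and those not containing $x$. I will treat the two families separately and then assemble them.

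\emph{Primes not containing $x$.} Because $x$ is normal and regular, these primes correspond bijectively to the primes of $\gA_x$ via extension and contraction \cite[Proposition 2.1.16]{MR}. By Proposition~\ref{b24Jun26}(2), $\gA_x\cong M_2(\CB)$ with $\CB\cong A_1[u^{-1},(1-u)^{-1}]$ simple, so $\gA_x$ is simple. Hence the only such prime is $\langle 0\rangle$, and $\langle 0\rangle$ is prime since $\gA\subseteq\gA_x$ and $\gA_x$ is simple. Because $x^2=\l(1-g^2)$ and $\l\ne 0$, primes not containing $x$ are also exactly those not containing $1-g^2$.

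\emph{Primes containing $x$.} Such a prime also contains $1-g^2=\l^{-1}x^2$. It also contains $[x,s]=2\l xg^2$, which already lies in $\langle x\rangle$, and $\d_1(g)=2x^2g\in\langle x\rangle$. Hence
\[
\gA/\langle x\rangle\cong \bigl(\mK[g^{\pm1}]/(g^2-1)\bigr)[s]\cong \mK[s]\times\mK[s],
\]
which is commutative. Here $s$ becomes central because $\d_1$ vanishes modulo $x$. I will check this isomorphism via the PBW basis $g^ix^js^k$ of $\gA=T[s;\d_1]$: modulo $\langle x\rangle$, the defining relations reduce to $g^2=1$ and to $s$ commuting with $g$. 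The primes of $\mK[s]\times\mK[s]$ are the minimal primes $(g\mp1)$ and the maximal ideals $(g\mp1,s-\gamma)$. Pulled back, these give $\langle x,g\pm1\rangle$ and $\langle x,g\pm1,s-\gamma\rangle$, with the inclusions shown in the diagram. This proves (1).

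For (2) and (4), each quotient by a nonzero prime is either $\mK[s]$ or $\mK$, which are domains, and only the $\mK$ quotients are simple. Since $\gA$ and $\langle x,g\pm1\rangle$ are not maximal, (2) follows. For (4), $\gA$ itself is not a domain: $\gA_x\cong M_2(\CB)$ contains zero divisors, and by clearing powers of the normal element $x$ we obtain zero divisors in $\gA$, for instance $(1-P)(1+P)=0$ with $P\in\gA$. Hence $\langle 0\rangle$ is not completely prime, while every nonzero prime is.

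For (3), $\langle 0\rangle$ is primitive because $\gA_x$ is simple, Noetherian and non-artinian. I will produce a faithful simple $\gA$-module as follows. Take a simple $\CB$-module, for example the module $\CB/\CB h'$ over the localized Weyl algebra, and check that it is simple. Transport it to a simple $\gA_x\cong M_2(\CB)$-module $N$. Then $N$ is faithful since $\gA_x$ is simple, $x$ acts bijectively on $N$, and any $\gA$-submodule of $N$ is stable under $x^{-1}$ because $x$ is normal and acts invertibly. So $N$ restricts to a faithful simple $\gA$-module. The ideals $\langle x,g\pm1,s-\gamma\rangle$ are maximal and therefore primitive. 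The ideals $\langle x,g\pm1\rangle$ are not primitive, since the quotient $\mK[s]$ is commutative and not a field.

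The main obstacle is showing that $\langle 0\rangle$ is primitive, that is, exhibiting a simple module over $A_1[u^{-1},(1-u)^{-1}]$ which survives restriction to $\gA$. I will try the standard simple module $\mK[u^{\pm1},(1-u)^{-1}]$, with $u$ acting by multiplication and $h'$ by $d/du$, and verify its simplicity directly.
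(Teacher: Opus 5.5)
Your treatment of (1), (2) and (4) is correct and is essentially the paper's argument: simplicity of $\gA_x\cong M_2(\CB)$ together with normality of $x$ forces every nonzero prime to contain $x$, and $\gA/\langle x\rangle\cong\Bbbk[g^{\pm1},s]/\langle g^2-1\rangle$ is commutative. The gap is in (3), at the primitivity of $\langle 0\rangle$.

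Simplicity of $\gA_x$ only shows that $\gA_x$ is primitive. Your transfer step is that every $\gA$-submodule $W$ of an $\gA_x$-module $N$ is $x^{-1}$-stable because $x$ is normal and acts invertibly. This is false in general; compare $\Bbbk[t]\subset\Bbbk(t)$ as $\Bbbk[t]$-modules. Normality only gives that $xW$ and $x^{-1}W$ are again $\gA$-submodules, not that $x^{-1}W\subseteq W$.

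It also fails for the module you actually propose. Under $\gA_x\cong M_2(\CB)$, every element of $T$ becomes a matrix over $\Bbbk[u^{\pm1}]$. For instance, $P\mapsto\operatorname{diag}(-1,1)$ and $X\mapsto\begin{pmatrix}0&1\\ X^2&0\end{pmatrix}$ with $X^2=\lambda(1-u^{-1})$. Moreover $g=u\bigl(1-\tfrac{1}{\sqrt{\lambda}}X\bigr)P$, $g^{-1}=u^{-1}g$ and $x=Xg$. By \eqref{Defh} one has $s\in-2\lambda h+T$, with $h$ a scalar matrix. Since $h=-2h'u(1-u)$, it acts on $V=\Bbbk[u^{\pm1},(1-u)^{-1}]$ by $f\mapsto-2\frac{d}{du}\bigl(u(1-u)f\bigr)$. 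Hence $\Bbbk[u^{\pm1}]\oplus\Bbbk[u^{\pm1}]$ is a nonzero proper $\gA$-submodule of $N=V\oplus V$. It is not $x^{-1}$-stable, because $x^{-2}=\lambda^{-1}(1-u)^{-1}$ acts as a scalar matrix. So $N$ is faithful but not simple over $\gA$.

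You can repair this in two ways.

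\emph{The paper's route.} Every nonzero prime contains $x$, so $\langle 0\rangle$ is locally closed in $\Spec(\gA)$. In the Jacobson ring $\gA$, the zero ideal is the intersection of the primitive ideals. If $\langle 0\rangle$ were not primitive, all of these would be nonzero primes and hence would contain $x\neq0$, a contradiction.

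\emph{Keeping your construction.} Choose a simple $\CB$-module on which $u$ acts locally finitely with eigenvalue $c\notin\{0,1\}$, e.g. $\CB/\CB(u-c)\cong A_1/A_1(u-c)$. On $N=(\CB/\CB(u-c))^{2}$ the operator $u-c$ is locally nilpotent. So for every $\gA$-submodule $W$ and every $w\in W$, the operator $1-u$ is invertible on the finite-dimensional space $\Bbbk[u]w$, with inverse a polynomial in $u$. Thus $(1-u)^{-1}w\in W$, and hence $x^{-1}w=\lambda^{-1}x(1-u)^{-1}w\in W$. Then $W$ is an $\gA_x$-submodule, and $N$ is a faithful simple $\gA$-module.
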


\begin{proof}
	(1) By Proposition \ref{b24Jun26}, $\gA_x \cong M_2(\CB)$ where $\CB$ is a simple domain. Hence $\gA_x$ is simple, and $\mathfrak A$ is prime. Let
	$\gp$ be a nonzero prime ideal of $\gA$. Then some power of $x$ belongs to $\gp$. Since $x$ is
	normal in $\gA$, it follows that $x\in\gp$. Thus every nonzero prime
	ideal of $\gA$ contains $x$. 
	
	Now consider the factor algebra $\bar{\gA}:=\gA/\langle x \rangle$. The defining relations give 
	\[\bar{\gA}\cong \mK[g^{\pm1},s]/\langle g^2-1\rangle.\]
	 This algebra is commutative. Since $g^2-1=(g-1)(g+1)$, every prime ideal of $\bar{\gA}$ contains either $g-1$ or
	$g+1$. Moreover, $\gA/\langle x, g \pm 1 \rangle \cong \mK[s]$. Therefore, 
	$$\Spec(\bar{\gA})=\{ \langle g\pm 1 \rangle \}\, \cup \, \{ \langle g\pm 1,  s-\gamma \rangle \mid \gamma \in \mK \}.$$
	Lifting these ideals to $\gA$ gives precisely the nonzero prime
	ideals listed in statement (1).

	(2) This follows immediately from the inclusion diagram.
	
	(3) The ideals $\langle x, g\pm 1 \rangle$ are not primitive, since $\gA/\langle x, g\pm 1 \rangle \cong \mK[s]$ has no faithful simple module. Every maximal ideal is
	primitive. Finally, the zero ideal is primitive because it is
	locally closed in $\Spec(\gA)$.
	
	(4) The algebra $\gA$ is not a domain, so $\langle 0 \rangle$ is not completely prime. For the remaining prime ideals, $\gA/\langle x,g\pm1\rangle\cong\mK[s]$ is a domain, and $\gA/\langle x,g\pm1,s-\gamma\rangle\cong\mK$. Hence all nonzero prime ideals are completely prime.
\end{proof}

\subsection{The localization $\mathfrak{U}_x$ and the classical quotient ring of $\mathfrak{U}$}

Let $\mathfrak{U}_x$ denote the localization of $\mathfrak{U}$ at the powers of the element $x$. The following theorem identifies $\mathfrak{U}_x$ as a matrix algebra, and determines the centre and classical quotient ring of $\mathfrak{U}$. 

\begin{theorem} \label{c24Jun26}  
	\begin{enumerate}
		\item Let $\mathcal R:=\mK[C]\otimes \mathcal B$. Then
		\begin{equation*}
		\mathfrak{U}_x
		\cong
		\mK[C]\otimes \gA_x
		\cong
		M_2(\mathcal R).
		\end{equation*}
		
		\item The centre of $\mathfrak{U}$ is a polynomial algebra $Z(\mathfrak{U})=\mK[C]$. 
		
		\item The classical quotient ring of $\mathfrak{U}$ is 
		\[Q(\mathfrak{U}) \cong M_2(\mathbf{K}),\]
		 where $\mathbf{K}:=\Frac(\mK[C]\otimes A_1)$. 
	\end{enumerate}
\end{theorem}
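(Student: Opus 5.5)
The plan is to show that, after inverting $x$, the generator $y$ can be traded for the central element $C$. In $\mathfrak U_x$ the formula \eqref{Cent} reads
\[
\lambda C g=2yx-s+\lambda g^2 .
\]
Since $x$ and $g$ are invertible in $\mathfrak U_x$, this gives
\[
y=\tfrac12\bigl(\lambda Cg+s-\lambda g^2\bigr)x^{-1},
\]
and the right-hand side lies in the subalgebra generated by $\mathfrak A_x$ and $C$. Hence $\mathfrak U_x$ is generated by $\mathfrak A_x$ together with the central element $C$. This yields a surjective algebra map
\[
\mK[C]\otimes\mathfrak A_x\to\mathfrak U_x,
\]
and it remains to show it is injective.

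For injectivity I will use a counting argument, and this is the step I expect to be the main obstacle. The idea is to compare bases. By the PBW basis of $\mathfrak U=\mathfrak A[y;\sigma_2,\delta_2]$, the ring $\mathfrak U_x$ is a free left $\mathfrak A_x$-module with basis $\{y^\ell\}$; here I use that $x$ is normal in $\mathfrak A$, so localizing at $x$ passes to the Ore extension. Since $C=\frac1\lambda(2yx-s+\lambda g^2)g^{-1}$, the element $C^\ell$ has $y$-degree exactly $\ell$ with leading coefficient an invertible element of $\mathfrak A_x$ (a unit multiple of $(2xg^{-1})^\ell$, up to $\sigma_2$-twists). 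A triangularity argument then shows that $\{C^\ell\}$ is also an $\mathfrak A_x$-basis of $\mathfrak U_x$. Combined with the centrality of $C$, this gives $\mathfrak U_x\cong\mK[C]\otimes\mathfrak A_x$. Finally, Proposition~\ref{b24Jun26}(1) gives
\[
\mK[C]\otimes M_2(\mathcal B)\cong M_2(\mK[C]\otimes\mathcal B)=M_2(\mathcal R),
\]
which completes part (1).

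For (2), $Z(\mathfrak U_x)=\mK[C]\otimes Z(\mathfrak A_x)=\mK[C]$ by Proposition~\ref{b24Jun26}(2), using that $\mathfrak A_x$ is a $\mK$-algebra with centre $\mK$. Since $\mathfrak U\subseteq\mathfrak U_x$ and $C\in\mathfrak U$, we get $Z(\mathfrak U)=\mathfrak U\cap\mK[C]=\mK[C]$. Alternatively, this follows from Proposition~\ref{5July26} together with the localization $\mathfrak U=\mathfrak B_w$.

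For (3), $x$ is regular in $\mathfrak U$, so $Q(\mathfrak U)=Q(\mathfrak U_x)\cong M_2(Q(\mathcal R))$ by \cite[Corollary 3.1.5]{MR}. By the proof of Proposition~\ref{b24Jun26}(2), $\mathcal B$ is a localization of $A_1$, so $\mathcal R=\mK[C]\otimes\mathcal B$ is a localization of the Noetherian domain $\mK[C]\otimes A_1$ at an Ore set. Therefore
\[
Q(\mathcal R)=\Frac(\mK[C]\otimes A_1)=\mathbf K .
\]
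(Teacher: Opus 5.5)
Your proposal is correct and follows the paper's proof essentially step for step. You eliminate $y$ in $\mathfrak U_x$ via $\lambda Cg=2yx-s+\lambda g^2$. You get injectivity of $\mK[C]\otimes\mathfrak A_x\to\mathfrak U_x$ because $C^n$ has an invertible leading $y$-coefficient (the paper's $(\frac{2}{\lambda}X)^n$). Parts (2) and (3) then come from Proposition~\ref{b24Jun26} and \cite[Corollary 3.1.5]{MR}, as in the paper.

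Your parenthetical alternative for (2) via $Z(\mathfrak B)=\mK[C]$ is not automatic, since centres need not pass to localizations. It can be completed as follows: if $z\in Z(\mathfrak U)$ and $zw^n\in\mathfrak B$ with $n\geq 1$, then the identity $[H,w^n]=2nw^n-2n\lambda w^{n-1}$ gives $zw^{n-1}\in\mathfrak B$, so $z\in\mathfrak B$ by induction and hence $z\in Z(\mathfrak B)$.
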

\begin{proof}
	(1) By \eqref{Uite}, we have $\mathfrak{U}=\gA[y;\s_2, \d_2]$. Localizing at the powers of $x$ yields 
	\[\mathfrak{U}_x=\gA_x[y;\s_2, \d_2].\] 
	Recall from \eqref{Cent}  that $C=\frac{1}{\l}\bigl(2yx-s+\l g^2\bigr)g^{-1}$. Multiplying by $\lambda g$ on the right, we obtain $\lambda Cg=2yx-s+\lambda g^2$. 
	Since $x$ is invertible in $\mathfrak{U}_x$, it follows that
	\begin{equation*}
	y=\frac{1}{2}(\lambda Cg+s-\lambda g^2)x^{-1}.
	\end{equation*}
	Hence $\mathfrak U_x$ is generated by $\gA_x$ and $C$. Since $C$ is central, multiplication induces a surjective homomorphism
	\begin{equation*}
	\mu\colon \mK[C] \otimes \gA_x \rightarrow \mathfrak{U}_x.
	\end{equation*}
	
	We prove that $\mu$ is injective. Rewrite the defining formula for $C$ as
	\begin{equation*}
	C=\frac{2}{\lambda}Xy+b, \qquad X:=xg^{-1}\in \gA_x, \quad b:=\frac{1}{\lambda}g^{-1}(s+\lambda g^2) \in \gA_x. 
	\end{equation*}
	Using the relation $yX=Xy+(s+x^2)g^{-1}$ and induction on $n$, we obtain
	\[
	C^n=(\frac{2}{\lambda}X)^n y^n+\text{terms of $y$-degree less than $n$}
	\]
	in the PBW expansion of the Ore extension $\mathfrak A_x[y;\sigma_2,\delta_2]$.
	Suppose that
	\[
	\sum_{i=0}^n a_iC^i=0, 
	\qquad
	a_i\in \mathfrak A_x.
	\]
	If $a_n\neq 0$, then the coefficient of $y^n$ in the PBW basis is
	$a_n\left(\frac{2}{\lambda}X\right)^n$, which is nonzero because
	$\frac{2}{\lambda}X$ is invertible. This contradiction shows that all
	$a_i=0$. Hence $\mu$ is injective, and therefore
	\[
	\mathfrak U_x\cong \Bbbk[C]\otimes \mathfrak A_x.
	\]
	 By Proposition \ref{b24Jun26}(1), $\gA_x \cong M_2(\CB)$, and consequently
	\begin{equation*}
	\mathfrak{U}_x
	\cong
	\mK[C]\otimes M_2(\mathcal B)
	\cong
	M_2(\mK[C]\otimes \mathcal B)=M_2(\mathcal R).
	\end{equation*}
	
	(2) By Proposition \ref{b24Jun26}(2), $Z(\gA_x)=\mK$. Since $\mathfrak{U}_x \cong \mK[C] \otimes \gA_x$, it follows that $Z(\mathfrak{U}_x)=\mK[C]$. Consequently, $Z(\mathfrak{U})=\mathfrak{U} \cap Z(\mathfrak{U}_x)=\mK[C]$. 
	
	(3) The element $x$ is regular in $\mathfrak{U}$, and hence localization at the powers of $x$ does not change the classical quotient ring. Thus $Q(\mathfrak{U})=Q(\mathfrak{U}_x)$. By statement (1) and \cite[Corollary 3.1.5]{MR}, 
	$Q(\mathfrak{U}_x) \cong M_2(Q(\CR)).$
	Since $\mathcal B$ is a localization of $A_1$, the domain
	$\mathcal R=\mK[C]\otimes\mathcal B$
	is a localization of $\mK[C]\otimes A_1$. Therefore, $\mathcal R$ and $\mK[C]\otimes A_1$ have the same classical quotient ring. Hence
	$Q(\mathcal R)\cong Q(\mK[C]\otimes A_1)=\mathbf K,$
	and consequently $Q(\mathfrak{U})\cong M_2(\mathbf K)$.
\end{proof}

\begin{corollary}
	The classical quotient ring of $\gB$ is 
	\[Q(\gB) \cong M_2(\mathbf{K}).\] 
\end{corollary}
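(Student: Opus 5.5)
The plan is to deduce $Q(\mathfrak B)$ from Theorem~\ref{c24Jun26}(3) through the localization description of Proposition~\ref{b23Aug26}, namely $\mathfrak U\cong\mathfrak B_w$ with $w=\lambda-X^2$. The key point is that inverting a single regular element does not change the classical quotient ring.

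First I will note that $Q(\mathfrak B)$ exists. Indeed, $\mathfrak B$ is a prime Noetherian ring: it is prime by Proposition~\ref{A6July26}, and it is Noetherian since it is a GWA over the Noetherian ring $D$ by \eqref{UGWA}. Goldie's theorem then shows that $Q(\mathfrak B)$ exists and is simple Artinian. Similarly, $Q(\mathfrak U)$ exists and equals $M_2(\mathbf K)$ by Theorem~\ref{c24Jun26}(3).

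Next I will verify that $w$ is regular in $\mathfrak B$. This holds because $\mathfrak B\subseteq\mathfrak U$ and $w=\lambda g^{-2}$ is a unit in $\mathfrak U$. By Lemma~\ref{23Aug26}, the set $S=\{w^n\}$ is an Ore set.

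The central step is the standard fact, as in \cite[Chapter 2--3]{MR}, that if $S$ is an Ore set of regular elements in a Noetherian ring $R$, then $Q(R)\cong Q(R_S)$. The argument runs as follows. The set $S$ consists of regular elements, so $S\subseteq\mathcal C_R$, and hence $R\subseteq R_S\subseteq Q(R)$ by the universal property of localization. Every element of $R_S$ has the form $s^{-1}r$. I then need to show that every regular element of $R_S$ is invertible in $Q(R)$. Write such an element as $s^{-1}c$. Then $c=s\cdot(s^{-1}c)$ is regular in $R_S$, and therefore regular in $R$. So $c$ is invertible in $Q(R)$, and hence so is $s^{-1}c$.

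Every element of $Q(R)$ is $c^{-1}r$ with $c$ regular in $R$, and $c$ stays regular in $R_S$ because $R_S\subseteq Q(R)$. Therefore $Q(R)$ satisfies the defining property of the classical quotient ring of $R_S$. I expect the only delicate point to be the claim that regular elements of $R_S$ are regular in $Q(R)$. I will handle it using the fact that in a Noetherian ring $R_S$ is also Goldie, and that one-sided regularity in the semisimple ring $Q(R)$ implies invertibility.

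Applying this with $R=\mathfrak B$ gives $Q(\mathfrak B)\cong Q(\mathfrak B_w)\cong Q(\mathfrak U)\cong M_2(\mathbf K)$, with the last step by Theorem~\ref{c24Jun26}(3).
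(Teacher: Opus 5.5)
Your proposal is correct and follows essentially the same route as the paper: identify $\mathfrak U$ with $\mathfrak B_w$ via Proposition~\ref{b23Aug26}, note that inverting the regular element $w=\lambda-X^2$ does not change the classical quotient ring, and conclude with Theorem~\ref{c24Jun26}(3). You also supply details the paper leaves implicit: regularity of $w$ follows from $w=\lambda g^{-2}$ being a unit of $\mathfrak U$, and you check directly that $Q(\mathfrak B)$ is the classical quotient ring of $\mathfrak B_w$. That direct check already shows regular elements of $\mathfrak B_w$ are invertible in $Q(\mathfrak B)$, so your extra Goldie-theoretic remark about the ``delicate point'' is unnecessary.
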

\begin{proof}
	By Proposition \ref{b23Aug26}, $\mathfrak{U} =\gB_{w}$. Since $w=\lambda-X^2$ is regular in $\gB$, localization at the powers of $w$ does not change the classical quotient ring. Therefore $Q(\gB)=Q(\gB_{w})=Q(\mathfrak{U})$. The result then follows from Theorem \ref{c24Jun26}(3). 
\end{proof}

\noindent
\textbf{Data availability statement}  Data sharing is not applicable to this article as no datasets were generated or analysed during the current study.

\

\noindent
\textbf{Declarations} The author declares no conflict of interest.

\small{
	

\begin{thebibliography}{99}
		
	
		
		\bibitem{Andruskiewitsch-Angiono-Garcia Iglesias}  N. Andruskiewitsch, I. Angiono, and A. Garc\'{\i}a Iglesias, Liftings of Nichols algebras of diagonal type I. Cartan type A, {\em Int. Math. Res. Not.} (2017), 2793--2884.  
		
		
		
		\bibitem{Andruskiewitsch-Angiono-Heckenberger} N. Andruskiewitsch, I. Angiono, and I. Heckenberger, On finite GK-dimensional Nichols algebras over abelian groups, {\em Mem. Amer. Math. Soc.} {\bf 271}, 1329 (2021). 
		
		\bibitem{Andruskiewitsch-Angiono-Heckenberger-1} N. Andruskiewitsch, I. Angiono, and I. Heckenberger,  Liftings of Jordan and super Jordan planes, {\em Proc. Edinb. Math. Soc.} {\bf 61}(3) (2018), 661--672. 
		
		\bibitem{Andruskiewitsch-Angiono-Heckenberger-2} N. Andruskiewitsch, I. Angiono, and I. Heckenberger, Corrigendum: Liftings of Jordan and super Jordan planes, {\em Proc. Edinb. Math. Soc.} {\bf 65} (2022), 577--586. 
		
		\bibitem{Andruskiewitsch-Bagio-Della Flora-Flores-1} N. Andruskiewitsch, D. Bagio, S. Della Flora, and D. Fl\^{o}res, Representations of the super Jordan plane, {\em S\~{a}o Paulo J. Math. Sci.} {\bf 11} (2017), 312--325. 
		
		\bibitem{Andruskiewitsch-Bagio-Della Flora-Flores-2} N. Andruskiewitsch, D. Bagio, S. Della Flora, and D. Fl\^{o}res, On the bosonization of the super Jordan plane, {\em S\~{a}o Paulo J. Math. Sci.} {\bf 13} (2019), 1--26. 
		
		\bibitem{Andruskiewitsch-Dumas} N. Andruskiewitsch and F. Dumas, On the super Jordan plane,  {\em Int. J. Algebra Comput.} (2026), \href{https://doi.org/10.1142/S0218196726410067}{DOI: 10.1142/S0218196726410067}.
		
		\bibitem{Andruskiewitsch-Pena Pollastri} N. Andruskiewitsch and H. Pe\~{n}a Pollastri, On the double of the (restricted) super Jordan plane, {\em New York J. Math.} {\bf 28} (2022), 1596--1622. 
		
		\bibitem{Andruskiewitsch-Schneider-1} N. Andruskiewitsch and H. J. Schneider, Lifting of quantum linear spaces and pointed Hopf algebras of order $p^3$, {\em J. Algebra} {\bf 209} (2000), 1--45. 
		
		\bibitem{Andruskiewitsch-Schneider-2} N. Andruskiewitsch and H. J. Schneider, Pointed Hopf algebras, in: {\em New Directions in Hopf Algebras}, in: MSRI Series, Cambridge Univ. Press, 2002, pp. 1--68. 
		
		\bibitem{Andruskiewitsch-Schneider-3} N. Andruskiewitsch and H. J. Schneider, On the classification of finite-dimensional pointed Hopf algebras, {\em Ann. of Math.} {\bf 171} (2010), 375--417. 
		
		\bibitem{Angiono-Garcia Iglesias} I. Angiono and A. Garc\'{\i}a Iglesias, Liftings of Nichols algebras of diagonal type II: all liftings are cocycle deformations, {\em Sel. Math. New Ser. } {\bf 25}, 5(2019), 95 p. 
		
		
		
		
		
		
		\bibitem{BaseilhacEtAl2018}
		P.~Baseilhac, V.~X.~Genest,  L.~Vinet, and
		A.~Zhedanov,
		An embedding of the Bannai--Ito algebra in
		$\mathcal{U}(\mathfrak{osp}(1,2))$ and $-1$ polynomials,
		 {\em Lett.  Math. Phys.} {\bf 108} (2018), no.~7,
		1623--1634.
		
		\bibitem{BavulaGWA} V. V. Bavula, Generalized Weyl algebras and their representations, {\em St. Petersburg Math. J. } {\bf4}(1) (1993), 71--92.
		
		\bibitem{BavulaSimpleCriteria} V. V. Bavula, Filter dimension of algebras and modules, a simplicity criterion for generalized Weyl algebras, {\em Commun. Algebra} {\bf 24} (1996), 1971--1992. 
		
		\bibitem{BergeronVinet} G. Bergeron and L. Vinet, Generating functions for the $\mathfrak{osp}(1|2)$ Clebsch--Gordan coefficients, {\em J. Phys. A} {\bf 49} (2016), no. 11, 115202, 28pp. 
		
		
		
		
		
		\bibitem{Djokovic-Hochschild} D. \v{Z}. Djokovi\'{c} and G. Hochschild, Seimisimplicity of 2-graded Lie algebras. II, {\em Illinois J. Math.} {\bf 20} (1976), 134--143. 
		
		
		
		\bibitem{GenestVinetZhedanov2013}
		V.~X.~Genest, L.~Vinet, and A.~Zhedanov,
		The algebra of dual $-1$ Hahn polynomials and the Clebsch--Gordan
		problem of $sl_{-1}(2)$,
		{\em J. Math. Phys.} {\bf 54} (2013), no.~2, 023506, 13pp.
		
		\bibitem{GenestVinetZhedanov2014}
		V.~X.~Genest, L.~Vinet, and A.~Zhedanov,
		The Bannai--Ito polynomials as Racah coefficients of the
		$sl_{-1}(2)$ algebra,
		 {\em Proc. Amer. Math. Soc.} {\bf 142} (2014), no.~5, 1545--1560.
		 
		
		
		
		
		\bibitem{Jacobson} N. Jacobson, Rational methods in the theory of Lie algebras, {\em Ann. of Math.} {\bf  36} (1935), 875--881.
		
		\bibitem{LiuGongXiang} G. Liu, On Noetherian affine prime regular Hopf algebras of Gelfand--Kirillov dimension
		1, {\em Proc. Amer. Math. Soc.}{\bf 137} (2009), no. 3, 777--785. 
		
		\bibitem{Lorenz} M. Lorenz, A Tour of Representation Theory, {\em Grad. Stud. Math.} {\bf 193}, American Mathematical Society, Providence, RI, 2018. 
		
		\bibitem{Lu2026}
		T.~Lu,
		Prime spectrum and representations of the super Jordan plane,
		\newblock \emph{preprint}, arXiv:2606.31731, 2026.
		
		
		\bibitem{LuBosoJordan} T.~Lu, Prime ideals and representations of the bosonization of the super Jordan plane, {\em preprint}, arXiv:2608.31012. 
		
		\bibitem{LuJordan2026}
		T.~Lu,
		On a family of liftings of the Jordan plane,
		\newblock \emph{preprint}, arXiv:2606.31788, 2026.
		
		
		
		
		\bibitem{MR} J. C. McConnell and J. C. Robson, Noncommutative Noetherian Rings, {\em Grad. Stud. Math.} {\bf 30}, 2001.
		
		\bibitem{Pinczon1990} G. Pinczon, The enveloping algebra of the Lie superalgebra $\mathfrak{osp}(1,2)$, {\em J. Algebra} {\bf 132}(1990), no. 1, 219--242.
		
		\bibitem{Reca-Solotar} S. Reca and A. Solotar, Homological invariants relating the super Jordan plane to the Virasoro algebra, {\em J. Algebra} {\bf 507} (2018), 120--185. 
		
		\bibitem{Rowen} L. H. Rowen, Ring Theory, student edition, Academic Press, Inc., Boston, MA, 1991. 
		
		
		\bibitem{TsujimotoVinetZhedanov2011}
		S.~Tsujimoto, L.~Vinet, and A.~Zhedanov,
		From $sl_q(2)$ to a parabosonic Hopf algebra,
		\emph{SIGMA Symmetry Integrability Geom. Methods Appl.} \textbf{7}
		(2011), Paper 093, 13 pp.
		
				
		
		
	
		
	\end{thebibliography}
\end{document}